\documentclass[a4paper,12pt,oneside]{article} 
\usepackage{amsfonts,amssymb}
\usepackage{color} \usepackage{mathrsfs} \let\mathcal\mathscr
\usepackage[all,ps,cmtip]{xy}

\title{\sc Categorical crepant resolutions of singularities and the Tits-Freudenthal magic square}
\author{\sc Roland Abuaf \footnote{Institut Fourier, 100 rue des maths,
38402, Saint
Martin d'H\`eres, France. E-mail :\it{abuaf@ujf-grenoble.fr}}}

\footskip = 20pt
\headheight = 0pt

\usepackage[dvips]{graphicx}

\usepackage{pstricks}
\usepackage{amsfonts,amssymb}
\usepackage{color} \usepackage{mathrsfs} \let\mathcal\mathscr

\usepackage[all,ps,cmtip]{xy}

\DeclareGraphicsExtensions{eps}
\DeclareGraphicsExtensions{pdf}

\frenchspacing
\usepackage[T1]{fontenc}
\usepackage{amssymb}

\usepackage{amsmath}
\usepackage{latexsym}

\pagestyle{plain}
\parindent .6cm

\usepackage[latin1]{inputenc}

\newtheorem{theo}{Theorem}[subsection]

\newtheorem{exem}[theo]{Example}
\newtheorem{rem}[theo]{Remark}
\newtheorem{prop}[theo]{Proposition}
\newtheorem{quest}[theo]{Question}

\newtheorem{defi}[theo]{Definition}
\newtheorem{nota}[theo]{Notations}
\newtheorem{lem}[theo]{Lemma}
\newtheorem{cor}[theo]{Corollary}
\newtheorem{conj}[theo]{Conjecture}

\def\DB{\mathrm{D^{b}}}
\def\DM{\mathrm{D^{-}}}
\def\DP{\mathrm{D^{perf}}}

\def\Ri{\mathrm{R^{i}}}
\def\R0{\mathrm{R^{0}}}
\def\ot{\otimes}

\def\HH{\mathrm{Hom}}
\def\Hh{\mathcal{H}om}
\def\LL{\mathrm{\textbf{L}}}
\def\Li{\mathrm{L^{i}}}
\def\RR{\mathrm{\textbf{R}}}
\def\OO{\mathcal{O}}
\def\D{\mathcal{D}}
\def\pt{{\pi_{\mathcal{T}}}_*}
\def\d{\delta}
\def\w{\omega}

\def\X{\tilde{X}}
\def\C{\mathcal{C}}

\def\>{\rightarrow}
\def\F{\mathcal{F}}
\def\T{\mathcal{T}}
\def\tg{\tau(\mathrm{G}_{\omega}(\mathbb{A}^3,\mathbb{A}^6))}
\def\A{\mathcal{A}}
\def\B{\mathcal{B}}

\def\H{\mathcal{H}}
\def\E{\mathcal{E}}
\def\AA{\mathbb{A}}

\def\ma{m_{\mathbb{A}}}
\def\GG{\mathrm{G}_{\omega}(\mathbb{A}^3,\mathbb{A}^6)}

\def\AP{\mathbb{A}\mathbb{P}^2}
\def\Q{\mathcal{Q}}
\def\e12{E_1^{(2)}}
\def\p12{\pi_{{\T_1},{\T_2}}}
\def\d{\delta}

\def\F{\mathcal{F}}
\def\T{\mathcal{T}}
\def\tg{\tau(\mathrm{G}_{\omega}(\mathbb{A}^3,\mathbb{A}^6))}
\def\A{\mathcal{A}}
\def\B{\mathcal{B}}
\def\a{\alpha}
\def\b{\beta}
\def\H{\mathcal{H}}
\def\AA{\mathbb{A}}
\def\G{\mathrm{G}}
\def\ma{m_{\mathbb{A}}}
\def\GG{\mathrm{G}_{\omega}(\mathbb{A}^3,\mathbb{A}^6)}

\def\AP{\mathbb{A}\mathbb{P}^2}
\def\Q{\mathcal{Q}}
\newcommand{\leftexp}[2]{{\vphantom{#2}}^{#1}{#2}}

\newenvironment{proof}
{
\noindent
\textit{\underline{Proof}} :\\
$\blacktriangleright\;$%
}
{\hspace{\stretch{1}}%
$\blacktriangleleft$}

{
\noindent
\textit{\underline{Proof of the main theorem}} :\\
$\blacktriangleright\;$%
}
{\hspace{\stretch{1}}%
$\blacktriangleleft$}

{
\noindent
\textit{\underline{Proof of theorem 3.2.4}} :\\
$\blacktriangleright\;$%
}
{\hspace{\stretch{1}}%
$\blacktriangleleft$}

{
\noindent
\textit{\underline{Sketch of the Proof}} :\\
$\blacktriangleright\;$%
}
{\hspace{\stretch{1}}%
$\blacktriangleleft$}

{
\noindent
\textit{\underline{Preuve}} : (id\'ee)\\
$\blacktriangleright\;$%
}
{\hspace{\stretch{1}}%
$\blacktriangleleft$}

\begin{document}

\maketitle

\begin{abstract}
We prove that the tangent developables of the varieties appearing in the third
row of the Tits-Freudenthal magic square admit categorical crepant resolutions
of singularities.

\end{abstract}

\vspace{\stretch{1}}

\newpage

\begin{section}{Introduction}
We work over $\mathbb{C}$ the field of complex numbers. If $X$ is an algebraic
scheme of finite typer over $\mathbb{C}$, we denote by $\DB(X)$ (resp. $\DM(X)$,
$\DP(X)$), the derived category of bounded complexes of coherent sheaves on $X$
(resp. derived category of unbounded complexes from below of coherent sheaves on
$X$, the full subcategory of $\DB(X)$ consisting of complexes of vector
bundles).

\begin{subsection}{Categorical crepant resolution of singularities}
Let $X$ be an algebraic variety with Gorenstein singularities. A crepant
resolution of singularities of $X$ (that is a resolution $\pi : \tilde{X}
\rightarrow X$ such that $\pi^* \omega_X = \omega_{\tilde{X}}$) is often
considered to be a "minimal" resolution of $X$. The following conjecture (see
\cite{bo}) gives a precise meaning to that notion of minimality:

\begin{conj}[Bondal-Orlov] Let $X$ be a variety with Gorenstein and rational
singularities and let $\tilde{X} \rightarrow X$ be a crepant resolution of $X$.
Then, for any other resolution $\tilde{X}' \rightarrow X$, there exists a fully
faithful embedding:
\begin{equation*}
\DB(\tilde{X}) \hookrightarrow \DB(\tilde{X}').
\end{equation*} 
\end{conj}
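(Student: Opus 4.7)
The plan is to approach this via a common smooth model and a Fourier--Mukai type functor. Since both $\tilde{X}$ and $\tilde{X}'$ are smooth by hypothesis, Hironaka's theorem provides a smooth projective variety $Y$ with projective birational morphisms $p : Y \to \tilde{X}$ and $q : Y \to \tilde{X}'$ commuting with the maps to $X$. I would then consider the integral functor
\[
\Phi := \RR q_* \circ \LL p^* : \DB(\tilde{X}) \longrightarrow \DB(\tilde{X}'),
\]
with kernel $\OO_Y$ viewed on $\tilde{X} \times \tilde{X}'$, and try to show that $\Phi$ is fully faithful.

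The first step is almost formal. Because $\tilde{X}$ is smooth, $\RR p_* \OO_Y = \OO_{\tilde{X}}$, and the projection formula combined with adjunction gives
\[
\HH_Y(\LL p^* A, \LL p^* B) \;=\; \HH_{\tilde{X}}(A, B \otimes^{\LL} \RR p_* \OO_Y) \;=\; \HH_{\tilde{X}}(A, B),
\]
so $\LL p^*$ is fully faithful into $\DB(Y)$; the same holds for $\LL q^*$. The question thus reduces to comparing the two images inside $\DB(Y)$, i.e.\ to showing that the unit $\mathrm{id} \to \RR p_* \LL q^* \RR q_* \LL p^*$ is an isomorphism on $\DB(\tilde{X})$.

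The second step is where crepancy is indispensable. Writing $K_Y = p^* K_{\tilde{X}} + F_p = q^* K_{\tilde{X}'} + F_q$, the assumption that $\pi$ is crepant whereas $\pi'$ is only a resolution forces $F_p \leq F_q$ as divisors on $Y$. I would exploit this inequality together with Grothendieck--Serre duality on $p$ and $q$ to argue that the ``extra'' relative dualizing contribution on the $q$-side absorbs the one on the $p$-side, yielding the desired identity on the unit. Equivalently, one would try to decompose the birational map $\tilde{X} \dashrightarrow \tilde{X}'$ using weak factorization into a sequence of smooth blow-ups and blow-downs and apply Orlov's blow-up semi-orthogonal decomposition at each step, keeping $\DB(\tilde{X})$ as a summand thanks to the crepant side being minimal.

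The main obstacle is exactly this last step, and in fact it is the crux of what is still open: even in the model case where $\tilde{X}$ and $\tilde{X}'$ are related by a single flop, the full faithfulness of $\Phi$ is a deep theorem (Bridgeland in dimension three, Kawamata in various higher-dimensional cases) that does not follow from formal adjunction manipulations. Controlling the factorization coherently with the discrepancy inequality $F_p \leq F_q$ in arbitrary dimension is precisely what prevents a uniform proof; I therefore expect the statement to remain conjectural in the generality quoted, and suspect the paper will invoke it only as motivation for a weaker, purely categorical notion of crepant resolution to be introduced next.
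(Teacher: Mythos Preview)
You have correctly diagnosed the situation. The statement is labeled as a \emph{Conjecture} in the paper, and the paper offers no proof of it whatsoever: it is quoted solely as motivation, immediately followed by the observation that crepant resolutions rarely exist and by Kuznetsov's definition of categorical crepant resolutions. Your final paragraph anticipates this exactly.

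That said, your write-up reads as though you were genuinely attempting a proof before conceding. If you keep this as commentary, you should make the framing explicit from the outset: the Fourier--Mukai functor $\Phi = \RR q_* \LL p^*$ you describe is indeed the standard candidate, and your remarks about the discrepancy inequality $F_p \leq F_q$ and the Bridgeland/Kawamata results are accurate context, but none of this constitutes progress toward the general case. In particular, the weak-factorization approach you sketch does not work as stated, because intermediate blow-downs can destroy the summand you are trying to preserve unless one has precise control at each step, which is exactly what is missing. So the honest summary is: there is nothing to compare, since the paper gives no proof, and your own outline correctly identifies where any attempted proof breaks down.
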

Unfortunately, crepant resolution of singularities are quite rare. For instance,
a cone over $v_2(\mathbb{P}^n) \subset \mathbb{P}^{\frac{n(n+1)}{2}}$ never
admits a crepant resolution of singularities when $n$ is odd (it is
$\mathbb{Q}$-factorial with terminal singularities). Thus it seems interesting
to look for "categorical crepant resolution of singularities".

\bigskip

The notion of categorical crepant resolution of singularities has been
formalized by Kuznetsov (see \cite{kuz}) in the case of Gorenstein varieties
with rational singularities.

\begin{defi}
Let $X$ be an algebraic variety with Gorenstein and rational singularities. A
\emph{categorical resolution of singularities} of $X$ is a
triangulated category $\T$ with a functor $\RR {\pi_{\T}}_* : \T \rightarrow
\DB(X)$ such that:
\begin{itemize}
\item there exists a resolution of singularities $\pi : \tilde{X} \rightarrow
X$ with a fully faithful admissible functor $\d : \T \hookrightarrow
\DB(\tilde{X})$ such that $\RR
{\pi_{\T}}_* = \RR \pi_* \circ \d$,

\item for all $\F \in \DP(X)$, we have:

\begin{equation*}
\RR {\pi_{\T}}_* \LL \pi_{\T}^* \F \simeq \F,
\end{equation*}
where $\LL \pi_{\T}^*$ is the left adjoint to $\RR {\pi_{\T}}_*$.
\end{itemize}
\bigskip

\noindent Moreover, if for all $T \in \T$ we have:
\begin{equation*}
 \d (S_{\T}(F)) = \d T \ot \pi^* \w_X [\dim X], 
\end{equation*}
where $S_{\T}$ is the Serre functor of $\T$, we say that $\T$ is \emph{strongly
crepant}.

\noindent If for all $\F \in \DP(X)$, there is a quasi-isomorphism:
\begin{equation*}
\LL \pi_{\T}^* \F \simeq \LL \pi_{\T}^! \F,
\end{equation*}
where $\LL \pi_{\T}^!$ is the right adjoint of $\RR {\pi_{\T}}_*$, we say that
$\T$ is \emph{weakly crepant}.

\end{defi}

Obviously, if $\T \rightarrow \DB(X)$ is a strongly crepant resolution, then it
is weakly crepant. The converse is false as shown in section $7$ and $8$ of
\cite{kuz}. If $\pi : \tilde{X} \rightarrow X$ is a crepant
resolution of singularities  then $\RR \pi_* : \DB(\tilde{X}) \rightarrow
\DB(X)$ is a strongly crepant categorical resolution of singularities. The
converse is partially true:

\begin{prop}
Let $X$ be a projective irreducible Gorenstein variety with rational
singularities. Let $\pi : \X \rightarrow X$ be a proper morphism with $\X$
irreducible, such that $\RR \pi_* : \DB(\X) \rightarrow \DB(X)$ is a weakly
crepant categorical resolution of singularities. Then $\pi : \X \rightarrow X$
is a crepant resolution of singularities.
\end{prop}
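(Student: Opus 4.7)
The plan is to use weak crepancy to identify the dualizing complex $\w_{\X}^{\bullet}$ with a shift of $\pi^*\w_X$, then to derive birationality of $\pi$ from the identity $\RR\pi_*\LL\pi^*\OO_X \simeq \OO_X$, and finally to exploit the admissible embedding supplied by the categorical-resolution data to conclude that $\X$ is smooth.

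Because $\pi$ is proper, Grothendieck duality identifies the right adjoint $\LL\pi_{\T}^!$ of $\RR\pi_*$ with the geometric twisted inverse image $\pi^!$, and weak crepancy then reads $\pi^!\F \simeq \LL\pi^*\F$ for every $\F \in \DP(X)$. Since $X$ is Gorenstein of dimension $n$, the sheaf $\w_X$ is a line bundle, hence perfect, so applying weak crepancy to $\F = \w_X$ and using $\w_X^{\bullet} \simeq \w_X[n]$ gives
\begin{equation*}
\w_{\X}^{\bullet} \simeq \pi^!\w_X^{\bullet} \simeq \pi^!\w_X[n] \simeq \LL\pi^*\w_X[n] \simeq \pi^*\w_X[n].
\end{equation*}
Because this dualizing complex is concentrated in a single cohomological degree and is a line bundle there, $\X$ is automatically Cohen--Macaulay of dimension $n$ and Gorenstein with $\w_{\X} \simeq \pi^*\w_X$. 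The categorical-resolution identity now yields $\RR\pi_*\OO_{\X} \simeq \OO_X$, which forces $\pi$ to be surjective; combined with $\dim \X = n$, the generic fibre has length one, so $\pi$ is birational.

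It remains to verify that $\X$ is smooth. For this, invoke the honest resolution $\pi' : \X' \to X$ and the fully faithful admissible embedding $\d : \DB(\X) \hookrightarrow \DB(\X')$ promised by the definition of a categorical resolution. For any closed point $x \in \X$, full faithfulness of $\d$ forces
\begin{equation*}
\mathrm{Ext}^k_{\X}(\OO_x, \OO_x) \simeq \mathrm{Ext}^k_{\X'}(\d\OO_x, \d\OO_x),
\end{equation*}
and since $\X'$ is smooth the right-hand side vanishes for $k \gg 0$. Over a non-regular Noetherian local ring the residue field has infinite projective dimension, so the left-hand side is nonzero for arbitrarily large $k$ whenever $\OO_{\X,x}$ is singular. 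Hence $\X$ is smooth everywhere, and combined with the crepancy statement of the previous paragraph, $\pi : \X \to X$ is a crepant resolution of $X$. The main delicate point I anticipate is the rigorous identification of the abstract right adjoint $\LL\pi_{\T}^!$ of the definition with Grothendieck's geometric $\pi^!$; everything downstream hinges on the clean equality $\w_{\X}^{\bullet} \simeq \pi^*\w_X[n]$ that this identification delivers.
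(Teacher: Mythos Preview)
Your proof is correct and uses the same three ingredients as the paper's argument: the admissible embedding into $\DB(\X')$ for smoothness via Ext-boundedness, Grothendieck duality for the crepancy identity $\w_{\X}\simeq\pi^*\w_X$, and the relation $\RR\pi_*\OO_{\X}\simeq\OO_X$ for birationality. The one genuine difference is the \emph{order} in which you deploy them. The paper first proves that $\X$ is smooth (from Ext-boundedness), and only then invokes Neeman's explicit formula $\pi^!\F\simeq\LL\pi^*\F\ot\w_{\X/X}[\dim\X-\dim X]$, which is stated there for smooth source; comparing with $\LL\pi^*\F$ immediately reads off $\dim\X=\dim X$ and $\w_{\X}=\pi^*\w_X$. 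You instead use the universal identity $\pi^!\w_X^{\bullet}\simeq\w_{\X}^{\bullet}$, valid for any proper morphism, and extract Cohen--Macaulayness, equidimensionality, and Gorensteinness of $\X$ directly from the shape of the dualizing complex \emph{before} knowing smoothness. This is arguably cleaner, since it decouples the duality step from the regularity step; the paper's ordering, on the other hand, makes the crepancy computation a one-line comparison of line bundles once smoothness is in hand. Your worry about identifying the abstract right adjoint with the geometric $\pi^!$ is legitimate but standard: for proper $\pi$ between schemes of finite type over $\mathbb{C}$, Neeman's Brown representability gives the right adjoint and it agrees with Grothendieck's $\pi^!$.
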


\begin{proof}
As $\DB(\X)$ is a weakly crepant categorical resolution of $X$, we have the
equality:

\begin{equation*}
\RR \pi_* \LL \pi^* \mathbb{C}(x) \simeq \mathbb{C}(x),
\end{equation*}
for all $x \in X_{smooth}$, which implies that $\pi$ is dominant. As it is
proper, it is surjective.
By hypothesis, $\DB(\X)$ is an admissible subcategory of the derived category of
a smooth projective variety. It implies that $\DB(\X)$ is
$\mathrm{Ext}$-bounded, so that
$\X$ is smooth. Moreover, we deduce that the right adjoint to $\RR \pi_*$
satisfies the formula (see \cite{neeman}):
\begin{equation*}
\LL \pi^! \F \simeq \LL \pi^* \F \ot \omega_{\X/X}[\dim \X - \dim X].
\end{equation*}
Since $\DB(\X)$ is a weakly crepant categorical resolution of singularities, we
have $\dim \X = \dim X$ and $\omega_{\X} = \pi^* \omega_X$. But the morphism
$\pi$ is surjective, so that the equality $\dim \X = \dim X$ implies that $\pi$
is generically finite.

Using again the fact that $\DB(\X)$ is a weakly crepant categorical resolution,
we have $\RR \pi_* \OO_{\X} = \OO_X$. As $\pi$ is proper, generically finite
and $X$ is normal, Zariski's Main Theorem implies that $\pi$ is birational .

\end{proof}

\end{subsection}

\begin{subsection}{Main result and connections with other works}

\begin{nota}
From now on, we will exclusively focus on \emph{weakly crepant categorical
resolution of singularities}. We will simply call them \emph{categorical crepant
resolution of singularities}.
\end{nota}

The main result of this chapter is the following:

\begin{theo}
The tangent developables of the following embedded varieties admit categorical
crepant resolutions of singularities:
\begin{itemize}
\item The symplectic Grassmannian $\mathrm{G_{\omega}(3,6)} \subset
\mathbb{P}^{13}$,
\item The Grassmannian $\mathrm{G(3,6)} \subset \mathbb{P}^{19},$
\item The spinor variety $\mathbb{S}_{12} \subset \mathbb{P}^{31},$
\item The octonionic Grassmannian: $\mathrm{G_{\w}(\mathbb{O}^3, \mathbb{O}^6)}
\subset \mathbb{P}^{55}.$
\end{itemize}
\end{theo}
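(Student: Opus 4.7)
The plan is to treat the four cases in parallel, exploiting the fact that they are the four $\mathbb{A}$-specializations of one universal construction, as $\mathbb{A}$ runs through the composition algebras $\mathbb{R}, \mathbb{C}, \mathbb{H}, \mathbb{O}$. Write $X = \GG$ and $\tau = \tau(\GG) \subset \mathbb{P}^N$. The strategy has three steps: (1) build a natural smooth geometric resolution $\pi : \tilde X \to \tau$ whose total space is a projective bundle over $X$; (2) decompose $\DB(\tilde X)$ via Orlov's projective bundle formula, obtaining a Lefschetz decomposition in the sense of Kuznetsov \cite{kuz}; (3) truncate this decomposition in accordance with the discrepancy of $\pi$ to isolate an admissible subcategory $\T$ of $\DB(\tilde X)$, and verify that the induced functor $\RR \pi_{\T *} : \T \to \DB(\tau)$ is weakly crepant.

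For step (1), I take $\tilde X$ to be the incidence variety $\{(x,p) : x \in X,\ p \in \widehat{T}_x X\}$, where $\widehat{T}_x X$ denotes the embedded projective tangent space of $X$ at $x$. The first projection $\tilde X \to X$ exhibits $\tilde X$ as a projective bundle of relative dimension $\dim X$ over $X$, and the second projection $\pi : \tilde X \to \mathbb{P}^N$ has image $\tau$, is birational onto it, is an isomorphism over the smooth locus $\tau \setminus X$, and contracts each fibre $\widehat{T}_x X \cong \mathbb{P}^{\dim X}$ to the point $x \in X \subset \tau$ over the singular stratum. Orlov's projective bundle formula then yields
\begin{equation*}
\DB(\tilde X) = \langle \DB(X),\ \DB(X)(1),\ \ldots,\ \DB(X)(\dim X) \rangle,
\end{equation*}
with the twist by the relative $\mathcal{O}(1)$, which coincides (up to a line bundle pulled back from $X$) with $\pi^* \OOT(1)$. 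A computation using $\w_X$ and the relative Euler sequence of this projective bundle then determines the discrepancy of $\pi$, singling out the subcategory $\T = \langle \DB(X),\ \ldots,\ \DB(X)(k) \rangle$ for an explicit integer $k = k(\mathbb{A})$, uniform in the magic square data.

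The main obstacle is step (3): verifying the identity $\LL \pi_{\T}^* \F \simeq \LL \pi_{\T}^! \F$ for every $\F \in \DP(\tau)$. I would compute the Serre functor of $\T$ by mutations in $\DB(\tilde X)$ and compare it with the twist by $\pi^* \w_\tau[\dim \tau]$ inherited from the ambient category, so that the adjunction machinery of \cite{kuz} delivers the desired equivalence of left and right adjoints. The $G$-equivariance of the whole construction, where $G$ is the automorphism group of $X$ acting transitively on $X$ and compatibly on $\tau$, should reduce the global verification to a single closed point of the singular stratum $X \subset \tau$. The delicate point is controlling the ``residual'' contribution of the bottom Lefschetz piece $\DB(X)(k)$, where the truncation takes place and whose behaviour depends on the codimension of the singular locus and on the uniform formula for $\w_X$ on the third row of the magic square. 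I expect this residual analysis to be the technical heart of the proof, requiring case-by-case verification guided by, but not entirely reducible to, the combinatorics of the $\mathbb{A}$-parameterization.
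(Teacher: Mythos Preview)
Your step~(1) rests on a mistaken picture of the singularities of $\tau(\GG)$. The singular locus of $\tau(\GG)$ is \emph{not} $X=\GG$; it is the larger orbit closure $\sigma_+(\GG)$, the variety of stationary bisecants, with $\GG\subsetneq\sigma_+(\GG)\subsetneq\tau(\GG)$. Consequently your map $\pi:\tilde X=\tilde{T}\GG\to\tau(\GG)$ is \emph{not} an isomorphism over $\tau\setminus X$: over a point of $\sigma_+(\GG)\setminus\GG$ the fibre of $\pi$ is a smooth quadric of dimension $\ma+1$, and over a point of $\GG$ it is a cone over $\AP$. The exceptional divisor $E$ of $\pi$ is a fibration in secant varieties of $\AP$ over $\GG$ (in particular $E$ is singular), and the induced map $\mu:E\to\sigma_+(\GG)$ is neither flat nor of finite Tor-dimension.

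The paper considers exactly this projective-bundle resolution (Proposition~\ref{desing}) and explains in the Remark that follows why Kuznetsov's machinery cannot be run on it: one would need a Lefschetz decomposition of $\DB(E)$ whose building block is $\mu^*\DB(\sigma_+(\GG))$, but since $\sigma_+(\GG)$ is singular and $\mu$ has infinite Tor-dimension (Appendix~A), the pullback $\mu^*\DB(\sigma_+(\GG))$ lands in $\DM(E)$ rather than $\DB(E)$, and there is no evident way to complete it to an admissible subcategory of the bounded derived category. Your Orlov truncation $\langle\DB(X),\ldots,\DB(X)(k)\rangle$ is a decomposition of $\DB(\tilde X)$ relative to $\rho:\tilde X\to X$, but crepancy is governed by $\OO(E)$, and $E$ restricts to a \emph{cubic} hypersurface in each $\rho$-fibre; so the Orlov twists and the $\OO(E)$-twists do not match, and the inclusion $\pi^*\DP(\tau)\subset\T$ you need has no reason to hold.

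The paper's actual route is to abandon $\tilde{T}\GG$ and instead resolve $\tau(\GG)$ by a two-step blow-up: first blow up $\GG$, then blow up the strict transform of $\sigma_+(\GG)$ (which is smooth by Proposition~\ref{desingsigma}). The resulting $X_2$ is smooth (Theorem~\ref{desingtau}), both exceptional divisors sit over \emph{smooth} centres, the discrepancies are $(3\ma+1)$ and $\ma$ respectively (Lemma~\ref{formula-canonicalII}), and a combined Lefschetz decomposition of $\DB(X_2)$ built from $\tilde\pi^*\DB(\GG)$ and $\tilde\pi_2^*\DB(\pi_1^*\sigma_+(\GG))$ (Proposition~\ref{key}, Lemma~\ref{decompo}) produces the admissible subcategory $\D_{X_2}$ serving as the categorical crepant resolution. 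The essential point you are missing is that one must first arrange the geometry so that every exceptional divisor maps to a smooth base before Kuznetsov's Lefschetz framework applies.
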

These four varieties have a uniform description in terms of complex composition
algebras (this will be discussed in section $2$). They are the "symplectic
Grassmannians" of $\mathbb{A}^3 \subset \mathbb{A}^6$ for $\AA$ the
complexification of $\mathbb{R}, \mathbb{C}, \mathbb{H}$ or $\mathbb{O}$ and
they appear as the varieties in the third row of the Freudenthal's magic square
(see \cite {manilands}). 

\bigskip

In \cite{abuafcategorical}, we define the notion of \emph{wonderful
resolution of singularities} (see definition $2.1.2$) and we
prove the following (see theorem $2.3.2$):

\begin{theo}
Let $X$ be a Gorenstein variety with rational singularities. Assume that $X$
admits a wonderful resolution of singularities, then $X$ admits a categorical
crepant resolution of singularities.
\end{theo}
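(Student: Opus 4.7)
The plan is to apply Kuznetsov's machinery of categorical resolutions built from Lefschetz decompositions of exceptional divisors, and to calibrate the lengths of those decompositions so that they exactly absorb the discrepancies of $\pi$. Writing the wonderful resolution as $\pi : \X \to X$, the exceptional locus is a strict normal crossing divisor $E = \bigcup_{i=1}^N E_i$, and one records the discrepancy decomposition $K_\X = \pi^* K_X + \sum_i a_i E_i$. The wonderful hypothesis supplies the additional geometric data needed below: each $E_i$ is a smooth projective bundle $p_i : E_i \to Z_i$ over a smooth base $Z_i$, the intersections $E_I = \bigcap_{i \in I} E_i$ carry compatible iterated projective bundle structures, and the discrepancies $a_i$ are governed by the relative dimensions of these bundles.

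On each $E_i$, I would choose a rectangular Lefschetz decomposition of $\DB(E_i)$ with respect to the conormal twist by $\OO_\X(-E_i)|_{E_i}$ of length exactly $a_i + 1$. Orlov's projective bundle theorem applied to $p_i : E_i \to Z_i$ produces a canonical such decomposition whose blocks are pulled back from $\DB(Z_i)$ and twisted by successive powers of the relative hyperplane class; the wonderful assumption guarantees that the relative dimension of $p_i$ is at least $a_i$, so a suitable truncation has the required length. Next, I would verify that these local choices glue coherently along the stratification by intersections $E_I$, in the sense required by Kuznetsov's gluing theorem, and define $\T \subset \DB(\X)$ as the triangulated subcategory of objects whose derived restriction to each $E_i$ lies in the chosen block. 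The inclusion $\d : \T \hookrightarrow \DB(\X)$ is then admissible, and with $\RR {\pi_\T}_* := \RR \pi_* \circ \d$ the isomorphism $\RR {\pi_\T}_* \LL \pi_\T^* \F \simeq \F$ for $\F \in \DP(X)$ follows directly from Kuznetsov's construction, so $\T$ is a categorical resolution in the sense of the definition stated above.

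It then remains to prove weak crepancy, namely $\LL \pi_\T^* \F \simeq \LL \pi_\T^! \F$ on $\DP(X)$. By Neeman-Grothendieck duality for the smooth variety $\X$, the right adjoint $\LL \pi_\T^!$ differs from $\LL \pi_\T^*$ by the twist $\otimes\, \w_{\X/X}[\dim \X - \dim X] = \otimes\, \OO_\X(\sum_i a_i E_i)[\dim \X - \dim X]$, and crepancy reduces to checking that this twist, restricted to the image of $\LL \pi_\T^*$ inside $\T$, acts by a shift composed with a pullback from $X$. This is where the calibration of lengths to discrepancies pays off: in a rectangular Lefschetz decomposition of length $a_i + 1$ the twist by $\OO(a_i E_i)$ cycles the blocks back onto themselves and restricts to the identity on the admissible piece cut out in $\T$, so the combined twist collapses to the expected pullback-plus-shift. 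The principal obstacle I foresee is the cross-stratum coherence of the chosen Lefschetz blocks along all intersections $E_I$: picking a decomposition on a single $E_i$ is straightforward via Orlov, but arranging the pieces to restrict compatibly on $E_i \cap E_j$ and on higher strata is a genuine compatibility requirement, and the definition of a \emph{wonderful} resolution should be tailored precisely to make this coherence automatic.
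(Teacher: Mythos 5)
You should first note that this theorem is not proved in the present paper: it is imported from \cite{abuafcategorical} (Theorem $2.3.2$ there), so the only internal point of comparison is the proof of Theorem \ref{mainII}, which runs the same machine in a specific case. Your overall strategy --- equip each exceptional divisor with a semi-orthogonal decomposition whose length is calibrated to its discrepancy, induce a decomposition of $\DB(\X)$ via Kuznetsov's Proposition \ref{lefschetzII}, take $\T$ to be the residual component, and deduce weak crepancy from $\pi^!\F \simeq \pi^*\F \ot \w_{\X/X}[\dim\X - \dim X]$ --- is exactly the right one and is the one used both here and in the cited paper.

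Two of your steps would not survive as stated, however. First, you assume each exceptional divisor $E_i$ is a projective bundle over its center and propose to extract the Lefschetz blocks from Orlov's projective bundle theorem. A wonderful resolution supplies no such structure: already for determinantal varieties (the motivating examples) the exceptional fibers are Segre- or Veronese-type varieties, and in the present paper the divisors are quadric fibrations or non-reduced fibrations over the centers. The decomposition actually used is $\langle \tilde{\pi}^*\DB(Z)\ot\OO_E(aE),\ldots,\tilde{\pi}^*\DB(Z)\ot\OO_E(E),\mathcal{E}\rangle$, and its semi-orthogonality is established not by Orlov but by the vanishings $\Ri\tilde{\pi}_*\OO_E(kE)=0$ for $1\le k\le a$, obtained from relative Kawamata--Viehweg and Grauert--Riemenschneider (Proposition \ref{vanishingII} is the analogue here); this is what the definition of a wonderful resolution is actually designed to deliver. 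Second, your crepancy argument --- that twisting by $\OO(a_iE_i)$ ``restricts to the identity on the admissible piece cut out in $\T$'' --- is not correct as phrased: the twist functor neither preserves $\T$ nor acts trivially on it, and $\pi^*\F\ot\OO(\sum a_iE_i)$ is not isomorphic to $\pi^*\F$ in $\DB(\X)$. The correct mechanism, as in the proof of Theorem \ref{mainII}, is a filtration argument: one connects $\pi^*\F$ to $\pi^!\F=\pi^*\F\ot\OO(\sum_i a_iE_i)$ by a chain of exact triangles whose cones are supported on the $E_i$ and lie in the complementary blocks $\A_l$, $\B_k$, so that $\pi^*\F$ is precisely the $\T$-component of $\pi^!\F$ and hence $\d\d^!\pi^!\F=\pi^*\F=\d\d^*\pi^*\F$. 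Your calibration of lengths to discrepancies is exactly what makes those cones land in the complementary blocks, so the idea is present, but the statement to be proved concerns components in a semi-orthogonal decomposition, not a twist acting as the identity.
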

As a corollary of this result, we obtain (see example $2.1.3$ in
\cite{abuafcategorical}):

\begin{cor} 
All Gorenstein determinantal varieties (square, symmetric or skew-symmetric)
admit categorical crepant resolutions of singularities.
\end{cor}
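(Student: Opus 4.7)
The plan is to apply the preceding theorem by exhibiting a wonderful resolution of singularities for each Gorenstein determinantal variety. In all three cases (square, symmetric, skew-symmetric), the natural candidate is the classical Kempf--Lascoux resolution, realized as the total space of a tautological homogeneous vector bundle over a Grassmannian. Explicitly, if $X_r$ denotes the variety of matrices of rank at most $r$ (square in $\HH(V,W)$, or symmetric/skew-symmetric in $S^2 V^*$ or $\Lambda^2 V^*$), then the incidence variety $\tilde{X}_r = \{(\phi, U) : U \subseteq \ker \phi\}$, sitting inside $X_r \times \mathrm{G}(\dim V - r, V)$, is isomorphic to the total space of a homogeneous bundle over the Grassmannian, and the second projection $\pi : \tilde{X}_r \to X_r$ is a well-known birational resolution of singularities, crepant precisely in the Gorenstein range.

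First I would recall the standard stratification of $X_r$ by rank: the singular locus of $X_r$ is $X_{r-1}$, whose singular locus is $X_{r-2}$, and so on down to $X_0 = \{0\}$. Over each open stratum $X_s \setminus X_{s-1}$, the fibers of $\pi$ are Grassmannians, and the construction iterates, since $\pi^{-1}(X_{r-1})$ is itself the total space of a similar tautological bundle over a flag variety. This yields a tower of smooth closed subvarieties inside $\tilde{X}_r$ whose contractions to $X$ fit the Springer-type pattern. Next I would verify that this tower satisfies the axioms of a wonderful resolution in the sense of definition $2.1.2$ of \cite{abuafcategorical}: the two key ingredients are the normal-crossings behaviour of the exceptional divisors obtained at each level of the tower, and a compatibility between the canonical bundle of $\tilde{X}_r$ and the conormal bundles to the successive strata. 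Both are naturally built into the homogeneous structure and can be read off from the Borel--Weil--Bott data for the tautological bundles on the flag varieties involved.

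The main obstacle is the simultaneous verification of the crepancy and normal-crossings conditions at every level of the tower. Concretely, one has to compute the relative canonical class $\w_{\tilde{X}_r/X_r}$ in terms of the tautological bundles on $\mathrm{G}(\dim V - r, V)$, and then check that the discrepancy along each rank stratum vanishes exactly when the ambient determinantal variety $X_r$ is Gorenstein (for instance, forcing the matrix to be genuinely square in the first case and the corank to be even in the skew-symmetric case). Once this bookkeeping is carried out, the preceding theorem applies verbatim and produces the desired categorical crepant resolution, realized as $\DB(\tilde{X}_r)$ embedded into $\DB(X_r)$ via $\RR \pi_*$.
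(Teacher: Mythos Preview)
Your overall strategy --- exhibit a wonderful resolution and then invoke the preceding theorem --- matches the paper's approach, which simply cites Example~2.1.3 of \cite{abuafcategorical} for the fact that Gorenstein determinantal varieties admit wonderful resolutions. However, several points in your outline reveal a confusion between \emph{geometric} crepancy and the \emph{wonderful} axioms, and this leads to genuine errors.

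First, the Kempf--Lascoux resolution is \emph{not} crepant ``precisely in the Gorenstein range'': as the paper notes in its Conclusion and proves in Appendix~B, all Pfaffian varieties are $\mathbb{Q}$-factorial with terminal singularities, so they admit no geometric crepant resolution at all, yet they are Gorenstein. The whole point of the theorem you are applying is that a wonderful resolution need not be crepant; the theorem \emph{produces} a categorical crepant resolution from a (generally non-crepant) wonderful one. Consequently your ``main obstacle'' --- checking that the discrepancy along each rank stratum vanishes --- is not what has to be verified. What must be checked are the axioms of definition~2.1.2 of \cite{abuafcategorical}, which concern the stratified/normal-crossings structure of the exceptional locus, not vanishing of discrepancies.

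Second, your final sentence is incorrect: the categorical crepant resolution is \emph{not} $\DB(\tilde{X}_r)$ itself. By Proposition~1.1.3 of the present paper, if $\DB(\tilde{X}_r)$ were a weakly crepant categorical resolution then $\pi:\tilde{X}_r\to X_r$ would be a geometric crepant resolution, which (as just noted) fails in the symmetric and skew-symmetric cases. The output of the theorem is a proper admissible subcategory $\T\subset\DB(\tilde{X}_r)$, obtained by cutting away suitable pieces supported on the exceptional divisors.

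Finally, the internal evidence of the paper (Conjecture~2.2.8 and the remark following it) indicates that the wonderful resolution used in \cite{abuafcategorical} is the one obtained by \emph{successive blow-ups along the rank strata}, rather than the single-step Kempf resolution over a Grassmannian. Your tower-of-subvarieties description is closer to the former than to the latter, but you should be explicit about which resolution you are using and verify the wonderful axioms for that one.
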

In Example $2.1.6$ of \cite{abuafcategorical}, we observed that the tangent
developable of
$\mathrm{G}(3,6)$ does not admit a
wonderful resolution of singularities. So the construction of a categorical
crepant resolution of singularities for the tangent variety of $\mathrm{G}(3,6)$
was still an open question. We solve this problem in the present chapter. Note
that the construction of such a categorical resolution of singularities could
also be useful for the (still
conjectural) determination of a homological projective dual to $\mathrm{G}(3,6)$
(see \cite{deliu}).

\bigskip

\noindent \textbf{Acknowledgements} I would like to thank Laurent Manivel for
his invaluable help during the preparation of this paper. I am also very
grateful to Sasha Kuznetsov who suggested to me 
the road map to answer the last
question in the conclusion of the present paper. Finally, stimulating
discussions with Laurent Gruson and Christian Peskine helped me a lot in the
writing of the appendix A.

\end{subsection}
\end{section}

\section{Resolution of singularities and the Tits-Freudenthal magic square}

\subsection{Basic description of the magic square}
One incarnation of the Tits-Freudenthal magic square is a table of $16$
varieties which are linked to each other
by very interesting geometric and representation-theoretic properties (see
\cite{manilands} for a detailed study of the magic square):

\begin{center}
\begin{tabular}{cccc} 
$v_2(Q_1)$ & $\mathbb{P}(T_{\mathbb{P}^2})$ & $\mathrm{G}_{\omega}(2,6)$ &
$\mathbb{OP}^2_0$ \\ 
$v_2(\mathbb{P}^2)$ & $\mathbb{P}^2 \times \mathbb{P}^2$ & $\mathrm{G}(2,6)$ &
$\mathbb{OP}^2$\\
$\G_{\omega}(3,6)$ & $\G(3,6)$ & $\mathbb{S}_{12}$ & $\mathrm{E_7/P_7}$ \\
$\mathrm{F_4^{ad}}$ & $\mathrm{E_6^{ad}}$ & $\mathrm{E_7^{ad}} $&$
\mathrm{E_8^{ad}}$
\end{tabular} 
\end{center}

The second row of this table enumerates the Severi varieties. Recall that a
Severi variety is a smooth variety $X \subset \mathbb{P}^N$ such that
$\frac{3}{2}\dim X +2 = N$ and the secant variety of $X$ does not fill
$\mathbb{P}^N$ (see \cite{zak} for the classification of the
Severi varieties). The varieties in the first row are hyperplane sections of the
Severi varieties. The ones in the last row are the closed orbits of the adjoint
representations of the exceptional groups $\mathrm{F_4, E_6, E_7}$ and
$\mathrm{E_8}$, while the third row gives the varieties of lines through a point
of the corresponding adjoint varieties.

One can also describe these varieties in terms of complex composition algebras.
Let $\AA$ denotes the complexification of one of the four real division algebra
($\mathbb{R}, \mathbb{C}$, $\mathbb{H}$ and $\mathbb{O}$). Let $W_{\AA}$ be the
space of $3 \times 3$
Hermitian matrices over $\AA$. The varieties of the second row can be seen as
the varieties of matrices of rank $1$ in $\mathbb{P}(W_{\AA})$, thus they are
Veronese embeddings of the projective planes over $\AA$ (we will denote them by
$\AP$). The varieties in the
first row are the traceless matrices of rank $1$ in $\mathbb{P}(W_{\AA})$ : they
are hyperplane sections of the previous ones. The varieties in the third row can
be described as $\G_{\omega}(\AA^3, \AA^6)$, the isotropic Grassmannians of
${\AA}^3$ in ${\AA}^6$. The varieties in the last row are the so-called
\textit{F-symplecta}, which we denote by $E(\AA)^{ad}$. We refer to
\cite{manilands} for more details on the
description of the magic square in terms of complex composition algebras. In the
following, we let $\ma = \dim_{\mathbb{C}} \AA$ and $\mathrm{Sp_6}(\AA)$ denotes
the groups:
$\mathrm{Sp_6}$, $\mathrm{SL_6}$, $\mathrm{Spin_{12}}$ and
$\mathrm{E_7}$. We summarize our notations:

\begin{center}
\begin{tabular}{c|cccc}
$\AA$ & $\mathbb{R} \ot_{\mathbb{R}} \mathbb{C}$ & $\mathbb{C} \ot_{\mathbb{R}}
\mathbb{C}$ & $\mathbb{H} \ot_{\mathbb{R}} \mathbb{C}$ & $\mathbb{O}
\ot_{\mathbb{R}} \mathbb{C}$ \\
\hline

$\AP_{0}$ & $v_2(Q_1)$ & $\mathbb{P}(T_{\mathbb{P}^2})$ &
$\mathrm{G}_{\omega}(2,6)$ &
$\mathbb{OP}^2_0$ \\ 
$\AP$ & $v_2(\mathbb{P}^2)$ & $\mathbb{P}^2 \times \mathbb{P}^2$ &
$\mathrm{G}(2,6)$ &
$\mathbb{OP}^2$\\
$\GG$ & $\G_{\omega}(3,6)$ & $\G(3,6)$ & $\mathbb{S}_{12}$ & $\mathrm{E_7/P_7}$
\\
$E(\AA)^{ad}$ & $\mathrm{F_4^{ad}}$ & $\mathrm{E_6^{ad}}$ &
$\mathrm{E_7^{ad}} $&$
\mathrm{E_8^{ad}}$
\end{tabular} 
\end{center}
\bigskip

We are especially interested in the varieties in the third row. Let us give
another description of these varieties which is more concrete and which will
be useful for further computations. The space $W_{\AA}$ is naturally endowed
with a cubic form : the determinant. We will denote it by $C$. Thus, $C$ is a
linear form $S^3 W_{\AA} \rightarrow \mathbb{C}$ and can also be considered as a
linear map $S^2 W_{\AA} \rightarrow W_{\AA}^*$. We denote by $V_{\AA}$ the space
$\mathbb{C} \oplus W_{\AA} \oplus W_{\AA}^* \oplus \mathbb{C}^*$, which
coordinates are $(\a,A,B,\b)$. Denote by $\phi$ the rational map:

\begin{equation*}
\xymatrix{ 
\phi : \mathbb{P}(\mathbb{C} \oplus W_{\AA}) \ar@{^{}-->}[r] &
\mathbb{P}(V_{\AA}) \\
[\a:A] \ar[r] & [\frac{1}{6}{\a}^3 : {\a}^2A : {\a}C(A^{\otimes 2}) :
\frac{1}{3}C(A^{\otimes 3})]} \\ 
\end{equation*}
and denote by $\mathcal{Q}$ the quartic defined on $V_{\AA}$ by:
\begin{equation*}
\begin{split}
& \Q(\a,A,B,\b) = (3\a{\b} - \frac{1}{2} \langle A,B \rangle)^2 + \frac{1}{3}
\big( {\b}C(A^{\otimes 3}) + {\a}C^*(B^{\otimes 3}) \big) \\
& - \frac{1}{6}
\langle
C^*(B^{\otimes 2}), C(A^{\otimes 2}) \rangle,
\end{split}
\end{equation*}
where $\langle , \rangle $ is the natural pairing between $W_{\AA}$ and
$W_{\AA}^*$ and $C^*$ denotes the determinant on $W_{\AA}^*$. The equation of
the secant
variety to $\AP \subset \mathbb{P}(W_{\AA})$ is $\{C(A^{\ot 3}) = 0 \}$.
The
following result is proved in \cite{manilands}:

\begin{theo}
The variety $\G_{\omega}(\AA^3, \AA^6) \subset \mathbb{P}(V_{\AA})$ is the
image of the rational map $\phi$. The quartic $\Q$ is an
$\mathrm{Sp_6}(\AA)$-invariant form on $V_{\AA}$ and the
hypersurface $\Q = 0$ is the tangent variety
of $\G_{\omega}(\AA^3,\AA^6)$ in $\mathbb{P}(V_{\AA})$.
\end{theo}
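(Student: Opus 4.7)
\begin{sketch}
The plan is to exploit the fact that $V_{\AA}$ carries the structure of a Freudenthal triple system on which $\mathrm{Sp_6}(\AA)$ acts, with $\Q$ as the essentially unique quartic invariant. Under this interpretation the stratification of $\mathbb{P}(V_{\AA})$ by Freudenthal rank produces $\GG$ as the rank-$1$ locus, the secant variety as rank $\leq 2$, the tangent variety as rank $\leq 3$, and the whole projective space as rank $\leq 4$, with $\{\Q=0\}$ being exactly the rank $\leq 3$ locus.

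First I would identify $V_{\AA}$ with the $5$-grading of the Lie algebra of $\mathrm{Sp_6}(\AA)$, in which the middle summand acts naturally on $W_{\AA}\oplus W_{\AA}^*$ and the extremal summands are one-dimensional. The vector $[1:0:0:0]$ is then a highest weight vector, its $\mathrm{Sp_6}(\AA)$-orbit is $\GG$, and exponentiating the opposite unipotent radical (whose Lie algebra is $W_{\AA}$) against this vector produces exactly the formula for $\phi(1,A)$; the central torus rescaling accounts for the parameter $\alpha$, and taking the closure picks up the locus $\alpha = 0$. This handles the identification of $\GG$ with the image of $\phi$.

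Next I would verify the invariance of $\Q$ by checking it on generators of the Lie algebra: invariance under the reductive part is manifest from the way the natural pairings, $C$ and $C^*$ appear, while invariance under the two unipotent radicals reduces to standard identities relating $C$, $C^*$ and the cubic norm on the Jordan algebra $W_{\AA}$. Alternatively, one decomposes $S^4 V_{\AA}^*$ as an $\mathrm{Sp_6}(\AA)$-module and observes a one-dimensional trivial summand, which pins down $\Q$ up to scalar. To identify $\{\Q=0\}$ with the tangent variety, I would first verify by direct substitution that $\Q\circ\phi\equiv 0$, using the Jordan identity for $C$; differentiating $\phi$ and expanding $\Q$ along the affine tangent line then shows that $\Q$ vanishes identically on every embedded tangent plane to $\GG$, so that the tangent variety is contained in $\{\Q=0\}$. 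For the reverse inclusion I would use that $\mathrm{Sp_6}(\AA)$ acts prehomogeneously on $V_{\AA}$, so that $\Q$ is irreducible and $\{\Q=0\}$ is an irreducible hypersurface; since the tangent variety of $\GG$ is $\mathrm{Sp_6}(\AA)$-invariant and of codimension one, the two coincide.

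The main obstacle is the computational heart of the last paragraph: verifying $\Q\circ\phi\equiv 0$ together with the tangential vanishing requires fluency with the cubic norm identities for $W_{\AA}$, while the dimension count showing that the tangent variety is a genuine hypersurface (rather than something smaller) is where the representation-theoretic input from the Freudenthal triple system is indispensable.
\end{sketch}
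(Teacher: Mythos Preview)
The paper does not prove this theorem; it simply cites it from \cite{manilands} (Landsberg--Manivel). Your sketch is essentially the argument one finds there: identify $V_{\AA}$ with the odd part of the $5$-grading of the relevant exceptional Lie algebra, realise $\phi$ as the exponential of the unipotent radical acting on the highest weight line, obtain the invariance of $\Q$ from the Freudenthal triple system structure (or equivalently from the uniqueness of the trivial summand in $S^4 V_{\AA}^*$), and then conclude $\tau(\GG)=\{\Q=0\}$ by combining the inclusion $\tau(\GG)\subset\{\Q=0\}$ with the irreducibility of $\Q$ and the fact that $\tau(\GG)$ is a hypersurface. So your proposal matches the cited source in spirit, and there is nothing to compare against in the present paper itself.
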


\subsection{Desingularization of the tangent variety of
$\G_{\omega}(\AA^3,\AA^6)$}

The orbit stratification of the action of $\mathrm{Sp_6}(\AA)$ on
$\mathbb{P}(V_{\AA})$ is given as follows (in the upper parentheses, we let the
dimension of the corresponding orbit):

\begin{equation*}
\GG^{(3 \ma + 3)} \subset \sigma_+(\GG)^{(5 \ma +4)} \subset \tau(\GG)^{(6\ma +
6)} \subset \mathbb{P}(V_{\AA}),
\end{equation*}
where $\sigma_+(\GG)$ is the variety of stationary bisecants to $\GG$ and
$\tau(\GG)$ is the tangent variety to $\GG $. The singular locus of $\tau(\GG)$
is $\sigma_+(\GG)$ and the singular locus of $\sigma_+(\GG)$ is $\GG$. We refer
to \cite{manilands} for more details. One should however note that there is a
slight mistake in prop $5.10$ of \cite{manilands}. Indeed, $\GG$ is not the
triple locus of $\tau(\GG)$. One can check by a simple Taylor expansion of the
equation of $\tau(\GG)$ that the tangent cone to $\tau(\GG)$ at any point of
$\GG$ (for instance $[1:0:0:0]$) is a double hyperplane (this will be done
explicitly in the proof of Theorem \ref{desingtau}). Landsberg and Manivel
also provide explicit desingularizations of the varieties $\tau(\GG)$ and
$\sigma_+(\GG)$. The following propositions are proved in Section $7$ of
\cite{manilands}:

\begin{prop} \label{desingsigma}
There is a natural diagram:

\begin{equation*}
\xymatrix{ F = \tilde{Q}^{\dim \AA + 2} \ar[r]^{q} \ar@{^{(}->}[d] & \GG
\ar@{^{(}->}[d] \\
\mathbb{P}(\mathcal{S}) \ar[r]^{p} \ar[d]^{\theta} & \sigma_+(\GG) \\
\mathrm{Sp_6}^{ad}(\AA) & \\}
\end{equation*}
where the map $p : \mathbb{P}(\mathcal{S}) \rightarrow \sigma_+(\GG)$ is a
resolution of singularities.
\end{prop}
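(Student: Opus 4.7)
The plan is to construct $\mathbb{P}(\mathcal{S})$ as a $\mathrm{Sp_6}(\AA)$-equivariant projective bundle over the adjoint variety $\mathrm{Sp_6}^{ad}(\AA)$, and to realize $p$ as the restriction of a natural linear evaluation map to $\mathbb{P}(V_{\AA})$. The key ingredient is that $\mathrm{Sp_6}^{ad}(\AA)$ is the projectivized minimal nilpotent orbit of the Lie algebra of $\mathrm{Sp_6}(\AA)$, of dimension $4\ma + 1$, and carries a canonical contact structure
\begin{equation*}
0 \to \mathcal{C} \to T_{\mathrm{Sp_6}^{ad}(\AA)} \to \mathcal{L} \to 0
\end{equation*}
with $\mathcal{L}$ the contact line bundle; correspondingly, the stabilizer of a point acts on the Freudenthal module $V_{\AA}$ with a distinguished weight-space decomposition.

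I would then define $\mathcal{S}$ as the equivariant sub-bundle of the trivial bundle $V_{\AA} \otimes \OO_{\mathrm{Sp_6}^{ad}(\AA)}$ whose fibre at a point $[x]$ is the $(\ma+4)$-dimensional subspace of $V_{\AA}$ spanned by the stationary bisecants resting on the line of the adjoint variety dual to $[x]$. The construction is uniform in $\AA$ thanks to the Jordan-algebraic / Freudenthal triple system structure on $V_{\AA}$, which is exactly what makes a case-free treatment of the magic square possible.

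Once $\mathcal{S}$ is in place, the inclusion $\mathcal{S} \hookrightarrow V_{\AA} \otimes \OO_{\mathrm{Sp_6}^{ad}(\AA)}$ induces the morphism $p : \mathbb{P}(\mathcal{S}) \to \mathbb{P}(V_{\AA})$, and its image is $\mathrm{Sp_6}(\AA)$-invariant, closed, and contains a generic stationary bisecant; by the orbit stratification recalled in the previous subsection, this image must coincide with $\sigma_+(\GG)$. A dimension count yields $\dim \mathbb{P}(\mathcal{S}) = (4\ma + 1) + (\ma + 4) - 1 = 5\ma + 4 = \dim \sigma_+(\GG)$, and birationality follows from the fact that a generic stationary bisecant uniquely determines its base line on the adjoint variety (via the point where the bisecant becomes tangent). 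Smoothness of $\mathbb{P}(\mathcal{S})$ is automatic as it is a projective bundle over a smooth homogeneous base. Restricting $p$ over $\GG \subset \sigma_+(\GG)$ and analysing which points of a fibre of $\theta$ are mapped into $\GG$ identifies $F$ as a smooth quadric sub-bundle with fibre $\tilde{Q}^{\ma + 2}$, together with the induced map $q : F \to \GG$.

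The main obstacle is the uniform and explicit identification of the sub-bundle $\mathcal{S}$ on $\mathrm{Sp_6}^{ad}(\AA)$: this requires a precise weight-space analysis of $V_{\AA}$ under the isotropy action at a point of the adjoint variety, and the matching of that weight structure with the Freudenthal triple product. Once $\mathcal{S}$ is pinned down, the remaining statements — surjectivity onto $\sigma_+(\GG)$, birationality, and the quadric structure of $F$ — reduce to equivariant verifications on the unique open $\mathrm{Sp_6}(\AA)$-orbit.
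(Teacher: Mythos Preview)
The paper does not supply its own proof of this proposition: it is quoted as a result of Landsberg--Manivel, with the sentence ``The following propositions are proved in Section~7 of \cite{manilands}'' immediately preceding the statement. Your sketch --- realising $\mathcal{S}$ as an equivariant sub-bundle of $V_{\AA}\otimes\OO$ on the adjoint variety via the weight-space filtration of $V_{\AA}$ under the isotropy of a point, then reading off surjectivity, birationality (by the dimension count $(4\ma+1)+(\ma+3)=5\ma+4$), and the quadric structure of $F$ from equivariance --- is precisely the shape of the argument in that reference, so there is nothing to correct; just be aware that in the present paper this is a citation rather than a result to be proved.
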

Here $\mathrm{Sp_6}^{ad}(\AA)$ is the closed orbit of the adjoint
representation  of $\mathrm{Sp_6}(\AA)$. The bundle $\mathcal{S}$ is a
homogeneous vector bundle on $\mathrm{Sp_6}^{ad}(\AA)$. The
map $\theta$ makes the exceptional divisor $F$ of $p$ a fibration into smooth
quadrics
of dimension $\dim \AA + 2$ over $\mathrm{Sp_6}^{ad}(\AA)$, while the map $q$
makes
it a fibration into $\AP$ over $\GG$. Note that the variety
$\mathbb{P}(\mathcal{S})$ is the blow-up of $\sigma_+(\GG)$ along $\GG$.

The orbit closure $\tau(\GG)$ can also be desingularized in a similar way:

\begin{prop} \label{desing}
Let $\tilde{T}\GG$ be the projective bundle of embedded tangent spaces to $\GG
\subset \mathbb{P}(V_{\AA})$. There is a natural diagram:

\begin{equation*}
\xymatrix{ E = \widetilde{\sigma(\AP)} \ar[r]^{\mu} \ar@{^{(}->}[d] &
\sigma_+(\GG) \ar@{^{(}->}[d] \\
\tilde{T}\GG \ar[r]^{\pi} \ar[d]^{\rho} & \tau(\GG) \\
\GG & \\}
\end{equation*}
where $\pi : \tilde{T}\GG \rightarrow \tau(\GG)$ is a resolution of
singularities.
\end{prop}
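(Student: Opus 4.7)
The plan is to use $\mathrm{Sp}_6(\AA)$-equivariance and a dimension count to reduce the statement to a single geometric computation at the base point $x_0 = [1:0:0:0]$, and then to analyse that point explicitly via the parametrization $\phi$. I would define $\pi$ as the evaluation map $(x,[v]) \mapsto [v]$ from $\tilde{T}\GG$ to $\mathbb{P}(V_{\AA})$; by the very definition of the tangent variety, its image is exactly $\tau(\GG)$. Since $\GG$ is smooth and homogeneous under $\mathrm{Sp}_6(\AA)$, $\tilde{T}\GG$ is a smooth $\mathbb{P}^{3\ma+3}$-bundle over $\GG$, of total dimension $(3\ma+3) + (3\ma+3) = 6\ma+6 = \dim\tau(\GG)$, so $\pi$ is surjective and generically finite. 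To upgrade this to birationality, I would use that $\tau(\GG) \setminus \sigma_+(\GG)$ is a single $\mathrm{Sp}_6(\AA)$-orbit, so the degree of $\pi$ may be read off from the fibre over any convenient point of that orbit; taking $y$ to be a generic point of $\mathbb{T}_{x_0}\GG$ and inspecting $\phi$ directly shows that $y$ lies on no other embedded tangent space, forcing $\deg \pi = 1$. Together with smoothness of $\tilde{T}\GG$ and the resulting isomorphism above the open stratum, this makes $\pi$ a resolution of singularities.

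The next task is to identify the exceptional divisor. Set $E := \pi^{-1}(\sigma_+(\GG))$; the bundle projection $\rho : \tilde{T}\GG \to \GG$ restricts to $E$, and by $\mathrm{Sp}_6(\AA)$-equivariance the structure of $E$ is controlled by the fibre over $x_0$, which is $\sigma_+(\GG) \cap \mathbb{T}_{x_0}\GG \subset \mathbb{P}(\mathbb{C} \oplus W_{\AA})$. The key geometric claim is that this intersection equals the projective cone from $x_0$ over the cubic hypersurface $\sigma(\AP) = \{C=0\} \subset \mathbb{P}(W_{\AA})$, and therefore has dimension $3\ma+2$. This yields $\dim E = (3\ma+3) + (3\ma+2) = 6\ma+5$, one less than $\dim \tilde{T}\GG$, so $E$ is indeed a divisor. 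The restriction $\mu := \pi|_E$ then maps $E$ onto $\sigma_+(\GG)$ with fibres of dimension $\ma+1$, and $E$ equipped with the two fibrations $\rho|_E$ and $\mu$ is exactly what is denoted $\widetilde{\sigma(\AP)}$ in the statement.

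The principal obstacle is the identification of $\sigma_+(\GG) \cap \mathbb{T}_{x_0}\GG$ with this cone. Since $\sigma_+(\GG)$ does not admit a clean global equation, I would attack the problem directly via $\phi$: study the limits of bisecant lines joining two points of $\GG$ whose limit position lies in $\mathbb{T}_{x_0}\GG$, and translate the condition for such a limit to remain a genuine bisecant (rather than collapse to a mere tangent line) through the cubic form $C$ into a rank-at-most-two condition on the associated element of $W_{\AA}$, thereby recovering $\sigma(\AP) = \{C=0\}$. This normal-form analysis at $x_0$, patched by the $\mathrm{Sp}_6(\AA)$-action, is the delicate technical step.
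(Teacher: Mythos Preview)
The paper does not actually prove this proposition: it is stated with the preamble ``The following propositions are proved in Section~7 of \cite{manilands}'', and only the surrounding commentary (the description of the fibres of $\rho|_E$ and of $\mu$) is given. So there is no in-paper argument to compare against; your proposal is an independent proof sketch, and should be judged as such.

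Your overall strategy is sound and is essentially what one would do: use $\mathrm{Sp}_6(\AA)$-equivariance to reduce everything to the base point $x_0=[1:0:0:0]$, verify that $\tilde{T}\GG$ is smooth of the correct dimension, check birationality on the open orbit, and then identify the fibre of $E$ over $x_0$. The dimension counts you give are correct.

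Two substantive comments. First, the birationality step is asserted but not argued: ``inspecting $\phi$ directly shows that $y$ lies on no other embedded tangent space'' is exactly the content that needs to be supplied, and it is not immediate from the formula for $\phi$. One clean way is to show that for generic $y\in\tau(\GG)$ the scheme $\pi^{-1}(y)$ is zero-dimensional (already done by your dimension count) and connected; the latter follows from $\RR\pi_*\OO_{\tilde{T}\GG}=\OO_{\tau(\GG)}$, which in turn follows from normality of $\tau(\GG)$ once birationality is known, so this route is circular. A direct orbit/stabiliser computation at one point is really required here.

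Second, your proposed attack on the key identification $\sigma_+(\GG)\cap\mathbb{T}_{x_0}\GG=\{\text{cone over }\sigma(\AP)\}$ via ``limits of bisecant lines \ldots\ the condition for such a limit to remain a genuine bisecant rather than collapse to a mere tangent line'' is both more complicated than necessary and slightly confused about what $\sigma_+$ is (the stationary bisecants are precisely those limits that \emph{do} collapse to tangent lines). A far more direct route is available: since $\sigma_+(\GG)$ is the singular locus of the explicit quartic $\{\Q=0\}$, and $\mathbb{T}_{x_0}\GG=\{B=0,\beta=0\}$, one simply sets $B=0$, $\beta=0$ in the first partial derivatives of $\Q$. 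All of them vanish identically on this locus except
\[
\frac{\partial\Q}{\partial\beta}\Big|_{B=0,\beta=0}=\tfrac{1}{3}\,C(A^{\otimes 3}),
\]
which immediately gives the cone from $x_0$ over $\sigma(\AP)=\{C=0\}\subset\mathbb{P}(W_{\AA})$, with no limiting argument needed.
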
 

The map $\rho : \tilde{T}\GG \rightarrow \GG$ is the projective bundle
whose fiber over $x \in \GG$ is the embedded projective tangent space to
$\GG$ at $x$. This map makes the exceptional divisor $E$ a fibration over
$\GG$ whose fibers are secant varieties of $\AP \subset \mathbb{P}(W_{\AA})$.
We denote it by $E = \widetilde{\sigma(\AP)}$.

The map $\mu : E \rightarrow \sigma_+(\GG)$ is not flat. Its fiber over $p \in
\GG$ is a cone over $\AP$, while its fiber over $p \in \sigma_+(\GG) \backslash
\GG$ is a smooth quadric of dimension $\dim \AA +1$. The map $\tilde{T}\GG
\rightarrow \tau(\GG)$ is the blow-up of $\tau(\GG)$ along $\sigma_+(\GG)$. 

The restriction of $\rho$ to the singular locus of the exceptional divisor $E$
makes it a fibration in $\AP$ over $\GG$. We denote it by $E_{sing} =
\widetilde{\AP}  \subset \widetilde{\sigma(\AP)}$. The divisor $E$ can be
desingularized by blowing up its singular locus and the desingularization is
also the projectivization of a homogeneous bundle. One also notices that
$\tilde{T}\GG$ is the blow-up of $\tau(\GG)$ along $\sigma_+(\GG)$. Since $\mu$
is smooth outside $\GG$, we have $E_{sing} \subset
\mu^{-1}(\GG)$. But a simple count of dimension shows that this inclusion is an
equality. We refer to \cite{manilands}, section $7$ for more details on this
desingularization.

\begin{rem} Though this resolution of singularities of $\tau(\GG)$ is quite
simple and very explicit, it will not be useful in order to find a categorical
crepant resolution of singularities of $\tau(\GG)$. Indeed, one of the key
points in order to construct such a categorical resolution would be to find a
semi-orthogonal decomposition:
\begin{equation*}
\begin{split}
 \DB(E) = & \langle \mu^* \DB(\sigma_+(\GG)) \ot \OO_{E}(r_{\AA}E), \ldots,
\mu^* \DB(\sigma_+(\GG)) \\
          & \ot \OO_{E}(E), \D \rangle,
\end{split}
\end{equation*}
where $E$ is the exceptional divisor of the resolution:
\begin{equation*}
\pi : \tilde{T}\GG \rightarrow \tau(\GG),
\end{equation*}
$r_{\AA}$ is the unique integer (well-defined since $E$ is integral) such that:
\begin{equation*}
K_{\tilde{T}\GG} = \pi^*K_{\tg} \ot \OO_{\tilde{T}\GG}(r_{\AA}E) 
\end{equation*}
and $\D$ is the left orthognal to the subcategory generated by the:
\begin{equation*}
\mu^* \DB(\sigma_+(\GG)) \ot \OO_{E}(kE),
\end{equation*}
for $1 \leq k \leq r_{\AA}$. Unfortunately the map:
\begin{equation*}
\mu : E  \rightarrow \sigma_+(\GG)
\end{equation*}
is not flat and $\sigma_+(\GG)$ is singular, thus $\mu^* \DB(\sigma_+(\GG))$
lies a priori in $\DM(E)$ and not in $\DB(E)$ (we prove in the Appendix A that
$\mu$ has infinite Tor-dimension, so that $\mu^* \DB(\sigma_{+}(\GG))$
really lies in $\DM(E)$ and not in $\DB(E)$). Though $\mu^*
\DM(\sigma_+(\GG))$ is an admissible subcategory of $\DM(E)$, it is very
unlikely (at least I am not able to prove it) that it is the negative completion
of an admissible subcategory of $\DB(E)$.

At this point, one could argue that the definition of a categorical crepant
resolution should be somehow modified and everything should be considered over
$\DM(\tau(\GG))$. Thus, a categorical resolution of $\tau(\GG)$ would be a
triangulated category $\T$, with a natural functor:
\begin{equation*}
{\pi_{\T}}_* : \T \rightarrow \DM(\tau(\GG)), 
\end{equation*}
such that $\T$ is an admissible subcategory of $\DM(Y)$, for some "geometric"
resolution of singularities $\pi :Y
\rightarrow
\tau(\GG)$. We should again have:
\begin{equation*}
\pi^* \DP(\tau(\GG)) \subset \T
\end{equation*}
and crepancy would be described as before:
\begin{equation*}
\pi_{\T}^* (\F) = \pi_{\T}^{!}(\F),
\end{equation*}
for all $\F \in \DP(\tau(\GG))$, where $\pi_{\T}^*$ and $\pi_{\T}^{!}$ are the
left and right adjoint of ${\pi_{\T}}_*$. However, this definition is not
meaningful
if one does not require that $\T$ comes from an admissible subcategory of
$\DB(Y)$. Otherwise, the theorem of Grauert-Riemenschneider would show that for
any resolution of singularities $\pi : Y \rightarrow \tau(\GG)$, the category
$\pi^* \DM(\tau(\GG))$ is always a categorical crepant resolution of
$\tau(\GG)$.
This is something we want to avoid, since we cannot consider $\pi^*
\DM(\tau(\GG))$ as a ``smooth`` triangulated category.

Hence, we see that we have to find another resolution of
singularities of $\tau(\GG)$, which would allow us to work over
$\DB(\tau(\GG))$.
\end{rem}

\begin{theo} \label{desingtau}
Let $\pi_1 : X_1 \rightarrow \tau(\GG)$ be the blow-up of $\tau(\GG)$ along
$\GG$ and let $\pi_2 : X_2 \rightarrow X_1$ be the blow-up of $X_1$ along the
strict transform of $\sigma_+(\GG)$ through $\pi_1$. The variety $X_2$ is a
resolution of singularities of $\tau(\GG)$.
\end{theo}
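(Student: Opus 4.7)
The plan is to reduce the smoothness of $X_2$ to a local analysis at a point of $\GG$, where the singularities of $\tau(\GG)$ are worst, combined with the explicit desingularizations of Propositions \ref{desingsigma} and \ref{desing}. Properness and birationality of $\pi_1 \circ \pi_2 : X_2 \to \tau(\GG)$ are immediate, since we are composing two blow-ups along proper subschemes of positive codimension; so the entire content of the statement is the smoothness of $X_2$.

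I would work in the affine chart $\a = 1$ around $p = [1{:}0{:}0{:}0] \in \GG$, with coordinates $(A,B,\b) \in W_{\AA} \oplus W_{\AA}^{*} \oplus \mathbb{C}$. A Taylor expansion of $\Q(1,A,B,\b)$ isolates the lowest-order term $9\b^{2}$, coming from $(3\b - \tfrac{1}{2}\langle A,B\rangle)^{2}$, which explicitly confirms that the tangent cone of $\tau(\GG)$ at $p$ is the double hyperplane $\{\b=0\}$, as announced after Proposition \ref{desing}. Using the parametrization $[\a : A] \mapsto [\tfrac{1}{6}\a^{3} : \a^{2}A : \a C(A^{\otimes 2}) : \tfrac{1}{3}C(A^{\otimes 3})]$, the variety $\GG$ is locally cut out by the regular sequence $u := B - \tfrac{1}{6}C(A^{\otimes 2})$ and $v := \b - \tfrac{1}{108}C(A^{\otimes 3})$. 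In the coordinates $(A,u,v)$, I would then carry out the blow-up $\pi_1$ chart by chart and verify that (i) $X_1$ is smooth away from the strict transform of $\sigma_+(\GG)$, and (ii) along this strict transform, $X_1$ has only transverse ordinary double points. Over $\sigma_+(\GG)\setminus\GG$ assertion (ii) is inherited from $\tau(\GG)$ itself, because the fibre of $\mu$ in Proposition \ref{desing} is a smooth quadric of dimension $\ma+1$, so a transverse slice to $\sigma_+(\GG)$ in $\tau(\GG)$ is a quartic hypersurface with smooth quadric tangent cone, i.e. a nodal singularity of dimension $\ma+2$. Over $\GG$ one must show that $\pi_1$ separates the double tangent hyperplane and produces the same transverse $A_1$-singularity along the strict transform of $\sigma_+(\GG)$.

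By the general compatibility of blow-ups with the nested chain $\GG \subset \sigma_+(\GG) \subset \tau(\GG)$, the strict transform of $\sigma_+(\GG)$ in $X_1$ coincides with $\mathrm{Bl}_{\GG}\sigma_+(\GG) = \mathbb{P}(\mathcal{S})$, which is a smooth projective bundle by Proposition \ref{desingsigma}. Blowing up this smooth centre then resolves the remaining transverse $A_1$-singularities via the standard local model $\{xy = z^{2}\}$, so $X_2$ is smooth and $\pi_1\circ\pi_2 : X_2 \to \tau(\GG)$ is a resolution of singularities. The main technical hurdle will be the explicit chart-by-chart blow-up computation near $\GG$: showing that no new singularities appear off the strict transform of $\sigma_+(\GG)$ and that the transverse singularity there is genuinely an ordinary node, uniformly in $\AA \in \{\mathbb{R},\mathbb{C},\mathbb{H},\mathbb{O}\}\otimes_{\mathbb{R}}\mathbb{C}$. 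The uniformity should come from working intrinsically with the cubic $C$ on $W_{\AA}$ and the Jordan-algebraic identities it satisfies, rather than case by case with the four composition algebras.
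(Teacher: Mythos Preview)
Your outline is plausible but takes a genuinely different route from the paper, and the hardest step you flag is precisely the one the paper goes out of its way to \emph{avoid}.

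The paper does not attempt a direct chart-by-chart verification that $X_1$ is smooth away from $\pi_1^*\sigma_+(\GG)$. In fact it explicitly explains why: the tangent cone to $\tau(\GG)$ along $\GG$ is the \emph{double} hyperplane $\{\beta^2=0\}$, so the exceptional divisor $E_1$ is globally non-reduced and $|E_1|_{red}$ is \emph{not} Cartier on $X_1$. This kills the usual ``smooth Cartier divisor $\Rightarrow$ smooth ambient'' argument on $E_1\setminus(E_1\cap\pi_1^*\sigma_+(\GG))$, and it means any direct local analysis has to confront a non-reduced exceptional locus rather than a standard projective bundle of tangent directions. Your proposal acknowledges the double hyperplane but does not say how you would handle this in the charts; that is the real content of the computation, and ``Jordan-algebraic identities for $C$'' is not yet a proof.

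Instead, the paper introduces an auxiliary device: a general polar divisor $P(\Q,p)=\tau(\GG)\cap\{\mathcal H_p=0\}$. The point is that $P(\Q,p)$ has \emph{reduced} tangent cone along $\GG$ (the cone over $\sigma(\AP)$), so its strict transforms through $\pi_1$ and $\pi_2$ can be analysed by the standard Cartier-divisor argument and shown to be smooth. Since $P^{(2)}(\Q,p)$ is Cartier in $X_2$, this gives smoothness of $X_2$ along $P^{(2)}(\Q,p)$. Smoothness along $E_2$ is immediate (fibration in smooth quadrics), and the remaining open orbit in $|E_1^{(2)}|_{red}$ is handled by $\mathrm{Sp}_6(\AA)$-equivariance and semicontinuity of multiplicity. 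So the paper trades your brute-force chart computation for a structural argument: pass to a well-behaved Cartier divisor plus use the group action.

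Your claim (ii), that $X_1$ has transverse $A_1$ singularities along all of $\pi_1^*\sigma_+(\GG)$, is morally what the paper establishes (the fibres of $E_2\to\pi_1^*\sigma_+(\GG)$ are smooth quadrics of dimension $m_{\AA}+1$), but note that even this is not automatic over $E_1\cap\pi_1^*\sigma_+(\GG)$: the paper gets it by first controlling the tangent cones of the polar $P(\Q,p)^{(1)}$ there and then bootstrapping. If you pursue the direct route, you will need an independent argument at those points.
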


Note that the strict transform of $\sigma_+(\GG)$ through $\pi_1$ (which we
denote by $\pi_1^* \sigma_+(\GG)$) is the blow-up of $\sigma_+(\GG)$ along $\GG$
and it is smooth by proposition $\ref{desingsigma}$. As a consequence, the
sequence of blow-ups $\pi_1  : X_1 \rightarrow \tau(\GG)$ and $\pi_2 : X_2
\rightarrow X_1$ only consists of blow-ups along smooth centers (which we will
later prove to be normally flat). In such a case, the projection of any
exceptional divisor to the corresponding center of blow-up has finite
Tor-dimension, which will be very convenient for us.

Unfortunately, we are not able to describe this resolution as the total space of
a projective bundle over a flag variety. In fact, I believe that there is no
projective bundle over a flag variety whose total space coincide with $X_2$.
Thus, we have to check locally that
this sequence of blow-ups really produces a resolution of singularities. We
recall the equation of the tangent variety of $\GG \subset \mathbb{P}(V_{\AA})$:
\begin{equation*}
\begin{split}
& \Q(\a,A,B,\b) = (3\a \b - \frac{1}{2} \langle A,B \rangle)^2 + \frac{1}{3}
\big(
\b C(A^{\otimes 3}) + \a C^*(B^{\otimes 3}) \big) \\
& - \frac{1}{6} \langle
C^*(B^{\otimes 2}), C(A^{\otimes 2}) \rangle,\\
\end{split}
\end{equation*}
where $(\a,A,B,\b)$ is a system of coordinates for $V_{\AA} = \mathbb{C} \oplus
W_{\AA} \oplus W_{\AA}^* \oplus \mathbb{C}$. In the following we denote by $E_1$
the exceptional divisor of $\pi_1$, $E_2$ the exceptional divisor for $\pi_2$
and $E_1^{(2)}$ the strict transform of $E_1$ through $\pi_2$. Before diving
into the proof of theorem \ref{desingtau}, we introduce some more notations in
the diagrams below:

\bigskip
\bigskip

\begin{equation*}
\xymatrix{ 
E_2 \ar@{^{(}->}@/^3pc/[rr] \ar[dd]^{\tilde{\pi}_2} & E_1^{(2)} \ar@{^{(}->}[r]
\ar[dd] & X_2 \ar[dd]^{\pi_2}
\ar@/^6pc/[dddd]^{\pi} \\
& & \\
Y_2 = \pi_1^*\sigma_+(\GG) \ar[dd] \ar@{^{(}->}@/^3pc/[rr] & E_1 \ar@{^{(}->}[r]
\ar[dd] & X_1 \ar[dd]^{\pi_1} \\
& & \\
\sigma_+(\GG) \ar@{^{(}->}@/_3pc/[rr] & Y_1 = \GG \ar@{^{(}->}[r] 
\ar@{_{(}->}[l] & \tau(\GG)}
\end{equation*}
\bigskip
\bigskip
\bigskip

\begin{equation*}
\xymatrix{ 
E_2 \ar[dd]^{\tilde{\pi_2}} & & \ar@{_{(}->}[ll] E_{1,2} := E_1^{(2)}
\cap E_2 \ar@{^{(}->}[rr]
\ar[dd]^{\tilde{\pi}_{1,2}} & & E_1^{(2)} \ar[dd]
\ar@/^6pc/[dddd]^{\tilde{\pi}}
\\
& & \\
\pi_1^*\sigma_+(\GG) & & \ar@{_{(}->}[ll] \pi_1^*\sigma_+(\GG) \cap E_1
\ar@{^{(}->}[rr] & & E_1 \ar[dd]^{\tilde{\pi}_1} \\
& & & & \\
& & & & \GG}
\end{equation*}

\begin{proof} The proof of this result will be divided into several
steps.

\bigskip

\Large\textbf{Step 1 : Tangent cones to $\tg$ along its different orbits.}
\normalsize
\bigskip

We are going to compute the tangent cones to $\tau(\GG)$ at points of its
different strata. By $\mathrm{Sp_6}(\AA)$-equivariance, the hypersurface
$\tau(\GG)$ is normally flat along the orbit $\sigma_+(\GG)
\backslash \GG$. So, proposition \ref{desing} shows that the tangent cone
to $\tau(\GG)$ at any point $x \in \sigma_+(\GG) \backslash \GG$ is a cone over
a smooth quadric of dimension $\ma +1$ with vertex $T_{\sigma_+(\GG),x}$
(where $T_{\sigma_+(\GG),x}$ is the embedded tangent space to $\sigma_+(\GG)$
at $x$).

We also compute the tangent cone to $\tau(\GG)$ at $x \in \GG$. Since
$\tau(\GG)$
is invariant under the action of $\mathrm{Sp_6}(\AA)$ and $\GG$ is a closed
orbit in $\tau(\GG)$, we only need to compute the tangent cone at any given
point in $\GG$, say $x_0 = (1,0,0,0)$. The first partial derivatives of $\Q$ all
vanish at $x_0$ (because we know that $\sigma_+(\GG)$ is the singular locus of
$\tau(\GG)$. Furthermore, the polynomials $C(A^{\otimes 3})$ and $C^*(B^{\otimes
3})$ are
homogeneous cubic polynomials in the variables $A$ and $B$, thus we have (with a
slight abuse of notations):
\begin{equation*}
\frac{\partial^2 C(A^{\otimes 3})}{{\partial A}^2}(1,0,0,0) = \frac{\partial^2
C^*(B^{\otimes 3})}{{\partial B}^2}(1,0,0,0) = 0.
\end{equation*}
The polynomial $C(A^{\ot 2})$ and $C^*(B^{\ot 2})$ are homogeneous of
degree 2, thus we have:
\begin{equation*}
\frac{\partial^2 \langle C^*(B^{\otimes 2}), C(A^{\otimes 2}) \rangle}{{\partial
A}^2}(1,0,0,0) = \frac{\partial^2 \langle C^*(B^{\otimes 2}), C(A^{\otimes 2})
\rangle}{{\partial B}^2}(1,0,0,0) = 0.
\end{equation*}
The same type of arguments show that the only second partial derivative of $Q$
which does not vanish at $(1,0,0,0)$ is $\frac{\partial^2 \Q}{{\partial \b
}^2}(1,0,0,0) = 18$. Thus the tangent cone to $\tau(\GG)$ at $x_0$ is given by
the equation $18{\b}^2 = 0$, this is a double hyperplane. This means that $E_1$,
the exceptional divisor of $\pi_1 : X_1 \rightarrow \tau(\GG)$, is a fibration
into doubled $\mathbb{P}^{3 \ma +2}$ over $\GG$. Suppose that $|E_1|_{red}$ is a
Cartier divisor on $X_1$. Then $X_1$ is smooth along $|E_1|_{red}$ because
$|E_1|_{red}$ is smooth. But $\tau(\GG)$ is singular along $\sigma_+(\GG)$, so
that $X_1$ is singular along $\pi_1^{-1}(\sigma_+(\GG) \backslash \GG)$ (because
$\pi_1$ is an isomorphism outside $\GG$). By semi-continuity of the
multiplicity, $X_1$ is singular along the Zariski closure:
\begin{equation*}
\overline{ \pi_1^{-1}(\sigma_+(\GG) \backslash \GG)} = \pi_1^*(\sigma_+(\GG).
\end{equation*}
But $E_1 \cap \pi_1^*(\sigma_+(\GG))$ is not empty since it is the exceptional
divisor of the blow-up of $\sigma_+(\GG)$ along $\GG$. This is a contradiction
and shows that $|E_1|_{red}$ is not Cartier on $X_1$.

\bigskip 

The fact that $|E_1|_{red}$ is not Cartier on $X_1$ is a source of troubles.
Indeed, we cannot discuss the smothness of $X_1$ along $E_1 \backslash
E_1 \cap \pi_1^*\sigma_+(\GG)$. So we have to introduce an intermediate device
which enables us to prove the smoothness of $X_2$. 

Note that we proved that all
tangent cones to $\tg$ are at most quadratic, so that there is no point of
multiplicity strictly bigger than two in $\tg$.

\bigskip

\Large{\textbf{Step 2 : Resolution and polar divisors.}}

\large{\textbf{Step 2.1 : Strategy of the proof.}}
\normalsize

\bigskip

Let $p = (p_0,P_1,P_2,p_3) \in \mathbb{P}(V_{\AA})$ be a general point and let
$P(\Q,p)$ be the polar to $\tau(\GG)$ with respect to $p$, that is:
\begin{equation*}
P(\Q,p) = \tau(\GG) \cap \{ \H_p = 0 \},
\end{equation*}
where $\H_p = p_0\frac{\partial \Q}{\partial \a} + P_1\frac{\partial
\Q}{\partial A} + P_2\frac{\partial \Q}{\partial B} + p_3\frac{\partial
\Q}{\partial \b} = 0$. It is clear that $\sigma_+(\GG) = \tau(\GG)_{sing}
\subset P(\Q,p)$. Before going any further, we summarize the situation in the
following diagram:

\begin{equation*}
\xymatrix{ 
P^{(2)}(\Q,p) \ar[dd]^{q_2} & & \ar@{_{(}->}[ll] {E'_1}^{(2)} = E_1^{(2)} \cap
P^{(2)}(\Q,p) 
\ar@{^{(}->}[rr] \ar[dd]
 & & E_1^{(2)} \subset X_2 \ar[dd]^{\pi_2} 
\ar@/^6pc/[dddd]^{\pi}
\\
& & & & \\
P^{(1)}(\Q,p) \ar[dd]^{q_1} & & \ar@{_{(}->}[ll] E'_1 = E_1 \cap P^{(1)}(\Q,p)
\ar@{^{(}->}[rr] & & E_1 \subset X_1 \ar[dd]^{\pi_1}  \\
& & \\
P(\Q,p) \ar@{^{(}->}[rrrr] & & & & \tau(\GG)}
\end{equation*}

Our goal is to show that the strict transform of $P(\Q,p)$ through $\pi = \pi_1
\circ
\pi_2$ (which we denote by $ P^{(2)}(\Q,p))$ is smooth. Indeed, if we do so, we
get that $X_2$ is smooth along $P^{(2)}(\Q,p)$ (because $P^{(2)}(\Q,p)$ is a
Cartier divisor on $X_2$). Moreover, if we can prove that $X_2$ is smooth along
$E_2$ and along
\begin{equation*}
E_1^{(2)} \backslash \left( (P^{(2)}(\Q,p) \cup E_2) \cap
E_1^{(2)} \right) = E_1^{(2)} \backslash \left( {E'_1}^{(2)} \cup E_{1,2}
\right),
\end{equation*}
then we have won. Indeed, we already know that
$\tau(\GG)$ is smooth outside $P(\Q,p)$ so that $X_2$ is also smooth outside
\begin{equation*}
P^{(2)}(\Q,p) \cup E_2 \cup \left( E_1^{(2)} \backslash \left( {E'_1}^{(2)} \cup
E_{1,2} \right) \right)
\end{equation*}
because $\pi$ is an isomorphism outside this locus and we have:
\begin{equation*}
\pi \left( P^{(2)}(\Q,p) \cup E_2 \cup E_1^{(2)} \right) \subset P(\Q,p).
\end{equation*}
\bigskip

\large{\textbf{Step 2.1 : Smoothness along $P(\Q,p)^{(2)}$.}}

\normalsize \textbf{Step 2.2.a : Tangent cones to the polar divisors.}

\bigskip

First, we show that $\{ \H_p = 0 \}$ is a smooth cubic hypersurface.
Indeed, let $y \in \{\H_p =0 \}$ such that:
\begin{equation*}
\frac{\partial \H_p}{\partial \a}(y) = \frac{\partial \H_p}{\partial A}(y) =
\frac{\partial \H_p}{\partial B}(y) = \frac{\partial \H_p}{\partial \b}(y) = 0.
\end{equation*}
Since $p$ is a general point, the above equalities imply that all second
partial derivatives of $\Q$ vanish at $y$. But $\Q$ is a homogeneous polynomial,
so that $\Q$ and all its first partial derivatives also vanish at $y$. As a
consequence, $y$ is a point of multiplicity $3$ in $\tg$, which is impossible by
hypothesis.

\bigskip

Let us also prove that the tangent cone to $P(\Q,p)$ at any point $x \in
\sigma_+(\GG)
\backslash \GG$ is a cone over a smooth quadric of dimension $\ma$ with vertex
$T_{\sigma_+(\GG),x}$.

Let $x \in \sigma_+(\GG) \backslash \GG$, the tangent space to $\{ \H_p = 0 \}$
at $x$ is given by the equation:  
\begin{equation*}
\leftexp{t}{(\a,A,B,\b)} \left( \begin{array}{cccc}

\frac{\partial^2 \Q}{{\partial
\a}^2}(x) & \frac{\partial^2 \Q}{{\partial \a} \partial A}(x) & \frac{\partial^2
\Q}{{\partial \a} \partial B}(x) & \frac{\partial^2
\Q}{{\partial \a} \partial \b}(x) \\
\frac{\partial^2 \Q}{{\partial
A} \partial \a}(x) & \frac{\partial^2 \Q}{{\partial A}^2}(x) &
\frac{\partial^2
\Q}{{\partial A} \partial B}(x) & \frac{\partial^2
\Q}{{\partial A} \partial \b}(x) \\
\frac{\partial^2 \Q}{{\partial B} \partial \a}(x) & \frac{\partial^2
\Q}{{\partial B}
\partial A}(x) & \frac{\partial^2
\Q}{{\partial B}^2}(x) & \frac{\partial^2
\Q}{{\partial B} \partial \b}(x) \\
\frac{\partial^2 \Q}{{\partial \b} \partial \a}(x) & \frac{\partial^2
\Q}{{\partial \b}
\partial A}(x) & \frac{\partial^2
\Q}{{\partial \b} \partial B}(x) & \frac{\partial^2
\Q}{{\partial \b}^2}(x)
\end{array}
\right) (p_0,P_1,P_2,p_3) = 0
\end{equation*}
and the tangent cone to $\tg$ at $x$ is given by:

\begin{equation*}
\leftexp{t}{(\a,A,B,\b)} \left( \begin{array}{cccc}

\frac{\partial^2 \Q}{{\partial
\a}^2}(x) & \frac{\partial^2 \Q}{{\partial \a} \partial A}(x) & \frac{\partial^2
\Q}{{\partial \a} \partial B}(x) & \frac{\partial^2
\Q}{{\partial \a} \partial \b}(x) \\
\frac{\partial^2 \Q}{{\partial
A} \partial \a}(x) & \frac{\partial^2 \Q}{{\partial A}^2}(x) &
\frac{\partial^2
\Q}{{\partial A} \partial B}(x) & \frac{\partial^2
\Q}{{\partial A} \partial \b}(x) \\
\frac{\partial^2 \Q}{{\partial B} \partial \a}(x) & \frac{\partial^2
\Q}{{\partial B}
\partial A}(x) & \frac{\partial^2
\Q}{{\partial B}^2}(x) & \frac{\partial^2
\Q}{{\partial B} \partial \b}(x) \\
\frac{\partial^2 \Q}{{\partial \b} \partial \a}(x) & \frac{\partial^2
\Q}{{\partial \b}
\partial A}(x) & \frac{\partial^2
\Q}{{\partial \b} \partial B}(x) & \frac{\partial^2
\Q}{{\partial \b}^2}(x)
\end{array}
\right) (\a,A,B,\b) = 0.
\end{equation*}
But we already showed that the tangent cone to $\tg$ at any $x \in 
\sigma_+(\GG) \backslash \GG$ is a cone with vertex $T_{\sigma_+(\GG),x}$ over a
smooth quadric of dimension $\ma + 1$. From this we deduce two facts:
\begin{itemize}
\item the projective dual of this tangent cone is a smooth quadric in
$T_{\sigma_+(\GG),x}^{\perp}$,

\item the image of the Hessian matrix of
$\Q$ (seen as a map $\mathbb{P}(V_{\AA}) \rightarrow \mathbb{P}(V_{\AA})^*$) is
the whole $T_{\sigma_+(\GG),x}^{\perp}$.
\end{itemize}
As a consequence, since $p$ is general in $\mathbb{P}(V_{\AA})$, the point:
\begin{equation*}
 \left( \begin{array}{cccc}

\frac{\partial^2 \Q}{{\partial
\a}^2}(x) & \frac{\partial^2 \Q}{{\partial \a} \partial A}(x) & \frac{\partial^2
\Q}{{\partial \a} \partial B}(x) & \frac{\partial^2
\Q}{{\partial \a} \partial \b}(x) \\
\frac{\partial^2 \Q}{{\partial
A} \partial \a}(x) & \frac{\partial^2 \Q}{{\partial A}^2}(x) &
\frac{\partial^2
\Q}{{\partial A} \partial B}(x) & \frac{\partial^2
\Q}{{\partial A} \partial \b}(x) \\
\frac{\partial^2 \Q}{{\partial B} \partial \a}(x) & \frac{\partial^2
\Q}{{\partial B}
\partial A}(x) & \frac{\partial^2
\Q}{{\partial B}^2}(x) & \frac{\partial^2
\Q}{{\partial B} \partial \b}(x) \\
\frac{\partial^2 \Q}{{\partial \b} \partial \a}(x) & \frac{\partial^2
\Q}{{\partial \b}
\partial A}(x) & \frac{\partial^2
\Q}{{\partial \b} \partial B}(x) & \frac{\partial^2
\Q}{{\partial \b}^2}(x)
\end{array}
\right) (p_0,P_1,P_2,p_3)
\end{equation*}
does not lie in the projective dual to the tangent cone to $\tg$ at $x$. This
amounts to say that the intersection of $T_{ \{\H_p = 0 \},x}$ with
the tangent cone to $\tau(\GG)$ at $x$ is transverse. Hence, the tangent cone to
$P(\Q,p)$ at $x$ is a cone over a smooth quadric of dimension $\ma$ with vertex
$T_{\{ \H_p = 0 \},x}$.

\bigskip

Now, we are interested in the tangent
cone to $P(\Q,p)$ at $x \in \GG$. We will compute it at $x_0 = (1,0,0,0)$ for
simplicity. The Taylor expansion of $\Q$ at $x_0$ is:

\begin{equation*}
\Q(\a,A,B,\b) = 9{\b}^2 + \frac{1}{3}. C^*(B^{\ot 3}) -3\b \langle A, B \rangle
+\, \text{terms of
order $4$},
\end{equation*}
and the expansion of $\H_p$ at $x_0$ is:
\begin{equation*}
\H_p(\a,A,B,\b) =  18p_3 \b + \, \text{terms of order $2$}. 
\end{equation*}
The tangent cone to $P(\Q,p)$ is defined by the ideal generated by all the
leading forms of the equations in the ideal generated by $\Q$ and $\H_p$. Let $f
= 2p_3 \Q - (\b+\frac{\langle A,B \rangle}{3 p_3}) \H_p$. Then one checks that
the Taylor expansion of $f$ at                  
$x_0$ is:
\begin{equation*}
f(\a,A,B,\b) = \frac{2p_3}{3}. C^*(B^{\otimes^3}) + \b.(\,\text{terms of
order $2$} \,) + \, \text{terms of order $4$}.
\end{equation*}
As a consequence, the tangent cone to $P(\Q,p)$ at $x_0$ (which we denote
by $\C_{P(\Q,p),x_0}$) is given by the
equation $ \{ \b =0 \}$ and $\{ C^*(B^{\ot 3}) = 0 \}$. This is the cone
over the secant variety to $\AP \subset \mathbb{P}(V_{\AA}) =
|\tilde{\pi_1}^{-1}(x_0)|_{red}$ with vertex $T_{\GG,x_0}$. Notice that this
tangent cone does not depend on the general point $p$ choosen to define the
polar $P(\Q,p)$. Hence, by $\mathrm{Sp_6}(\AA)$-equivariance, this is true for
all $x \in \GG$. Thus, for all $x \in \GG$, the tangent cone $\C_{P(\Q,p),x}$ is
the cone over the secant variety $\AP \subset \mathbb{P}(V_{\AA}) =
|\tilde{\pi}_1^{-1}(x)|_{red}$ with vertex $T_{\GG,x}$.

\bigskip

\textbf{Step 2.2.b : Explicit resolution of the polar divisors.}

\bigskip

Let $q_1 : P(\Q,p)^{(1)} \rightarrow P(\Q,p)$ be the blow-up of $P(\Q,p)$ along
$\GG$ ($P(\Q,p)^{(1)}$ is also the strict transform of $P(\Q,p)$ along $\pi_1$)
and
denote by $E'_1$ the exceptional divisor of that blow-up. The above description
of the tangent cones of $P(\Q,p)$ at any $x \in \GG$ shows that the map
$q_1 : E'_1 \rightarrow \GG$ is a fibration into secant varieties to $\AP
\subset |\pi_1^{-1}(x)|_{red}$, for $x \in \GG$. Since $\AP \subset
\mathbb{P}(V_{\AA})$ is exactly the singular locus of its secant variety, the
singular locus
of $E'_1$ is a fibration into $\AP$ over $\GG$. Moreover, by Proposition
\ref{desingsigma}, the fiber over $x \in \GG$ of the exceptional divisor
of the blow-up of $\sigma_+(\GG)$ along $\GG$ is the secant variety to $\AP
\subset \mathbb{P}(V_{\AA}) = |\tilde{\pi_1}^{-1}(x)|_{red}$. Therefore, we
have:
\begin{equation*}
{E'_1}_{sing} = E'_1 \cap \pi_1^* \sigma_+(\GG) = E_1 \cap \pi_1^*
\sigma_+(\GG). 
\end{equation*}
Note that $P(\Q,p)^{(1)}$ is smooth along $E'_1 \backslash {E'_1}_{sing}$,
because $E'_1$ is a Cartier divisor on $P(\Q,p)^{(1)}$. We discussed the tangent
cones to $P(\Q,p)$ at points in $\sigma_+(\GG) \backslash \GG$ : these are cones
over smooth quadrics of dimension $\ma$ with vertex $T_{\sigma_+(\GG)}$. Thus
for
any $x$ in
\begin{equation*}
\pi_1^* \sigma_+(\GG) \backslash \left( E'_1 \cap \sigma_+(\GG)
\right),
\end{equation*}
the tangent cone to $P(\Q,p)^{(1)}$ at $x$ is again a cone over a
smooth quadric of
dimension $\ma$ with vertex $T_{\pi_1^*(\sigma_+(\GG)),x}$.

\bigskip

Let us compute the tangent cone to $P(\Q,p)^{(1)}$ at any point $x \in
{E'_1}_{sing}$. We know that $E'_1$ is a fibration into secant varieties of
$\AP$
over $\GG$. But the tangent cone to this secant variety at any point $x \in \AP$
is a cone over a smooth quadric of dimension $\ma$ with vertex $T_{\AP,x}$.
Hence, the tangent cone to $E'_1$ at $x \in {E'_1}_{sing}$ is a cone over a
smooth quadric of dimension $\ma$ with vertex $T_{{E'_1}_{sing},x}$. Since
$E'_1$ is a Cartier divisor in $P(\Q,p)^{(1)}$, we have:
\begin{equation*}
\mathrm{mult} \OO_{P(\Q,p)^{(1)},x} \leq \mathrm{mult} \OO_{E'_1,x} =2,
\end{equation*}
for any $x \in {E'_1}_{sing}$. Moreover, we know
that $\mathrm{mult} \OO_{P(\Q,p)^{(1)},y} = 2$ for all $y \in \pi_1^*
\sigma_+(\GG) \backslash {E'_1}_{sing}$. Thus, by semi-continuity of the
multiplicity, we have:
\begin{equation*}
\mathrm{mult} \OO_{P(\Q,p)^{(1)},x} = 2,
\end{equation*}
 for all $x \in {E'_1}_{sing}$. Since the tangent cone to $E'_1$ at $x \in
{E'_1}_{sing}$ is a cone over a smooth quadric of dimension $\ma$ with vertex
$T_{{E'_1}_{sing},x}$, we deduce that the tangent cone to $P(\Q,p)^{(1)}$ at $x$
is a cone over the same smooth quadric of dimension $\ma$, but with vertex
$T_{\pi_1^* \sigma_+(\GG),x}$ (recall that $\pi_1^* \sigma_+(\GG)$ is smooth by
proposition \ref{desingsigma}).

\bigskip

Let $q_2 : P(\Q,p)^{(2)} \rightarrow P(\Q,p)^{(1)}$ be the blow-up of
$P(\Q,p)^{(1)}$ along $\pi_1^*\sigma_+(\GG)$ ($P(\Q,p)^{(2)}$ is the strict
transform of $P(\Q,p)^{(1)}$ along $\pi_2$) and denote by $E'_2$ be the
exceptional divisor of that blow-up. The above description of the tangent cones
to $P(\Q,p)^{(1)}$ at any $x \in \pi_1^* \sigma_+(\GG)$ shows that the map $q_2
: E'_2 \rightarrow \pi_1^*\sigma_+(\GG)$ is a fibration into smooth quadrics of
dimension $\ma$. This implies that $E'_2$ is smooth, from which we deduce that
$P(\Q,p)^{(2)}$ is smooth along $E'_2$. Moreover, we proved that $P(\Q,p)^{(1)}$
is smooth along $E'_1 \backslash (E'_1 \cap \pi_1^* \sigma_+(\GG))$. As a
consequence, $P(\Q,p)^{(2)}$ is also smooth along ${E'_1}^{(2)}$, the total
transform of $E'_1$ through $q_2$. Since $P(\Q,p)$ is smooth outside
$\sigma_+(\GG)$, we get that $P(\Q,p)^{(2)}$ is also smooth outside $E'_2 \cup
{E'_1}^{(2)}$ and this completes the proof of the smoothness of $P(\Q,p)^{(2)}$.

\bigskip

Finally $\OO_{X_2}(P(\Q,p)^{(2)}) = \pi^* \OO_{X_2}(P(\Q,p)) \ot \OO_{X_2}(k_1
E_1^{(2)} + k_2 E_2)$, where $k_1$ and $k_2$ are some integers. We deduce that
$P(\Q,p)^{(2)}$ is a Cartier divisor in $X_2$. Hence the smoothness of
$P(\Q,p)^{(2)}$ implies the smoothness of $X_2$ along $P(\Q,p)^{(2)}$.
\bigskip 

\large{\textbf{Step 2.3 : Smoothness along $E_2$}}
\normalsize
\bigskip

The Cartier divisor $E_2 \subset X_2$ is a fibration into smooth quadrics of
dimension $\ma+1$ over $\pi_1^* \sigma_+(\GG)$, from which we deduce that it is
smooth. As a consequence, the variety $X_2$ is also smooth along $E_2$.

\bigskip
\large{ \textbf{Step 2.4 : ``Final step``: smoothness along $E_1^{(2)}
\backslash \\
\left( (P(\Q,p)^{(2)} \cup E_2) \cap E_1^{(2)} \right)$}}
\normalsize

\bigskip
In the following, we denote by $\mathrm{SL}_3(\AA)$, the groups :
$\mathrm{SL}_3$, $\mathrm{SL}_3\times \mathrm{SL}_3$, $\mathrm{SL}_6$ and
$\mathrm{E}_6$.

We finally show that $X_2$ is smooth. The only fact left to
demonstrate is
that $X_2$ is smooth along: 
\begin{equation*}
E_1^{(2)} \backslash
 \left( (P(\Q,p)^{(2)} \cup E_2) \cap E_1^{(2)} \right) = E_1^{(2)} \backslash
\left( {E'_1}^{(2)} \cup E_{1,2} \right).
\end{equation*}
To do so, we need to exploit the action of
$\mathrm{Sp_6}(\AA)$ on $\tau(\GG)$. The universal property of the blow-up
implies that the stabilizer of $x$ in $\mathrm{Sp_6}(\AA)$ acts on
$\pi_1^{-1}(x)$. The reductive part of this stabilizer is $\mathrm{SL_3(\AA)}$
(see \cite{manilands}).
Any non-trivial orbit closure of the action of this stabilizer on
$|\pi_1^{-1}(x)|_{red}$ is an orbit closure for the action of
$\mathrm{SL_3}(\AA)$ on $\mathbb{P}(W_{\AA})$. Hence,
the orbit diagram of the action on $|\pi_1^{-1}(x)|_{red}$ of the stabilizer of
$x$ in
$\mathrm{Sp_6}(\AA)$ is:
\begin{equation*}
\AP \subset \sigma(\AP) \subset \mathbb{P}(W_{\AA}) = |\pi_1^{-1}(x)|_{red}.
\end{equation*}
The group $\mathrm{Sp_6}(\AA)$ acts on $X_1$ and $E_1$ is stable under this
action. The above description of
the action on $|\pi_1^{-1}(x)|_{red}$ of the stabilizer of $x$ in
$\mathrm{Sp_6}(\AA)$
shows that the dense orbit in $|E_1|_{red}$ is the complement of
$P(\Q,p)^{(1)} \cap E_1
= E'_1$. The group
$\mathrm{Sp_6}(\AA)$ also acts on $X_2$ and $E_1^{(2)}$ is the stable
for this action. The dense orbit inside $|E_1^{(2)}|_{red}$ is the
complement in $|E_1^{(2)}|_{red}$ of ${E'_1}^{(2)} \cup E_{1,2}$. As a
consequence, the multiplicity of $X_2$ along $|E_1^{(2)}|_{red} \backslash
\left({E'_1}^{(2)} \cup E_{1,2} \right)$ is less than the multiplicity of $X_2$
along
$E_{1,2}$. But we know that $X_2$ is smooth along $E_2$, so that $X_2$ is smooth
along $E_1^{(2)} \backslash \left( {E'_1}^{(2)} \cup E_{1,2} \right)$ and we are
done!  
\end{proof}

In fact, we believe that a much more general statement than Theorem
\ref{desingtau} holds. To state our conjecture, we need some recollections on
prehomogeneous vector spaces (we refer to \cite{kimura} for a detailed treatment
of prehomogeneous spaces).

\begin{defi}
A \emph{strongly prehomogeneous vector space} is the data $(\mathrm{G},V)$ of an
algebraic group $\mathrm{G}$ acting linearly on a finite dimensional vector
space $V$ with a finite number of orbits. 

\noindent Let us denote by $V_{\mathrm{G}}^0,\ldots, V_{\mathrm{G}}^m$ the
orbits
of $\mathrm{G}$ on $V$. We say that the orbit diagram of $(\mathrm{G},V)$ is
\emph{linear} if $V_{\mathrm{G}}^0 = \{0 \}$ and up to
a reordering, we have:
\begin{equation*}
V_{\mathrm{G}}^i \subset \overline{V_{\mathrm{G}}^{i+1}},
\end{equation*} 
for all $i \geq 0$, where $\overline{V_{\mathrm{G}}^{i+1}}$ denotes the Zariski
closure of $V_{\mathrm{G}}^{i+1}$.
\end{defi}

\begin{exem}
\begin{itemize}
\item The square determinantal varieties of size $n$ are the orbits of the
action of $\mathrm{GL_n}\times \mathrm{GL_n}$ on $\mathbb{C}^n \ot
\mathbb{C}^n$. Their orbit diagram is linear.
\item The symmetric (resp. skew-symmetric) determinantal varieties of size $n$
are the orbits of the action of $\mathrm{GL_n}$ (resp. $\mathrm{GL_n}$) on $S^2
\mathbb{C}^n$ (resp. $\bigwedge^2 \mathbb{C}^n$). Their orbit diagram is also
linear.

\item The pair $(\mathbb{C}^* \times \mathrm{Sp_6}(\AA), V_{\AA})$ is a strongly
prehomogeneous vector space
whose orbit diagram is again linear.

\item The pair $(\mathrm{GL_8}, \bigwedge^3 \mathbb{C}^8)$ is a
strongly prehomogeneous
space whose orbit diagram is not linear (see \cite{holweck}).

\item The pair $(\mathrm{GL_9}, \bigwedge^3 \mathbb{C}^9)$ is not a
prehomogeneous space (see \cite{holweck}).
\end{itemize}
\end{exem}
We can now state our conjecture:

\begin{conj}
Let $(\mathrm{G},V)$ be a strongly prehomogenous vector space whose orbit
diagram $\{
V_{\mathrm{G}}^0,\ldots, V_{\mathrm{G}}^m \}$ is linear and let $X =
\mathbb{P}(\overline{V_{\mathrm{G}}^i})$ be the projectivization of the closure
of any orbit. Consider the sequence:
\begin{equation*}
X_i \stackrel{\pi_i}\rightarrow X_{i-1} \rightarrow \cdots \rightarrow X_2
\stackrel{\pi_2}\rightarrow
X_1 = X,
\end{equation*}
where $\pi_k : X_k \rightarrow X_{k-1}$ is the blow-up of the strict transform
of $\mathbb{P}(\overline{V_{\mathrm{G}}^{k-1}})$ through $\pi_1 \circ \ldots
\circ
\pi_{k-1}$. Then $X_i$ is smooth.
\end{conj}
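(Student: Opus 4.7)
The plan is to proceed by induction on the orbit index $i$. For $i=1$, $\mathbb{P}(\overline{V_{\mathrm{G}}^1})$ is the geometric quotient of the single orbit $V_{\mathrm{G}}^1$ by the scaling $\mathbb{C}^*$-action, hence a homogeneous space under the appropriate quotient of $\mathrm{G}$, and in particular smooth.

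For the inductive step, the key structural observation is that the blow-up centers in the sequence are themselves smooth by induction: the strict transform of $\mathbb{P}(\overline{V_{\mathrm{G}}^{k-1}})$ through $\pi_2 \circ \ldots \circ \pi_{k-1}$ is exactly the output of the same construction applied to the smaller problem $X = \mathbb{P}(\overline{V_{\mathrm{G}}^{k-1}})$. So the sequence consists entirely of blow-ups along smooth centers, and all that remains is to analyze tangent cones and prove smoothness of the ambient variety $X_i$ itself.

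Following the strategy of the proof of Theorem \ref{desingtau}, I would then introduce an auxiliary divisor to detect smoothness away from the exceptional loci. In the common case where $\mathbb{P}(\overline{V_{\mathrm{G}}^i})$ is a hypersurface cut out by a semi-invariant $f$, a polar hypersurface $P(f,p)$ with respect to a general point $p$ plays this role; in higher codimension one would use a general linear section or a general pencil of relative invariants. By a secondary induction---on codimension or on orbit index---one would show the strict transform of this auxiliary divisor through the full sequence of blow-ups is smooth, which forces $X_i$ to be smooth away from the exceptional divisors. Smoothness along the last exceptional divisor $E_i$ is direct, since it is a projectivized normal cone bundle over a smooth center; smoothness along the earlier exceptional divisors, minus their intersection with the later ones, is then extracted by $\mathrm{G}$-equivariance plus semi-continuity of multiplicity, as in Step 2.4 of the proof of Theorem \ref{desingtau}.

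The main obstacle lies in controlling the tangent cones to $\mathbb{P}(\overline{V_{\mathrm{G}}^i})$ along each lower stratum $\mathbb{P}(\overline{V_{\mathrm{G}}^j})$. In the concrete case of $\tau(\GG)$, an explicit Hessian computation produced a uniform multiplicity bound of two, so that a single round of polar-divisor analysis was enough. In general no such bound is available, and the proof seems to require a structural statement guaranteeing that each blow-up $\pi_k$ strictly decreases the multiplicity of the strict transform of $\mathbb{P}(\overline{V_{\mathrm{G}}^i})$ along the preimage of $\mathbb{P}(\overline{V_{\mathrm{G}}^{k-1}})$, or at least reduces the tangent cone there to a simpler quasi-homogeneous configuration. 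Extracting such a uniform multiplicity drop purely from the representation-theoretic data $(\mathrm{G}, V)$, without relying on explicit equations, is where I expect the heart of the argument---and the main source of difficulty---to lie.
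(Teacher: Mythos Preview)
The statement you are addressing is labeled as a \emph{Conjecture} in the paper, not a theorem, and the paper offers no proof of it. Immediately after stating it, the author only remarks that it is known for square, symmetric and skew-symmetric determinantal varieties, and that Theorem~\ref{desingtau} together with Proposition~\ref{desingsigma} verify it for the specific pair $(\mathbb{C}^* \times \mathrm{Sp_6}(\AA), V_{\AA})$. There is therefore no proof in the paper to compare your proposal against.

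Your proposal is itself not a proof but a strategy sketch, and you correctly identify where it stalls: controlling the tangent cones of $\mathbb{P}(\overline{V_{\mathrm{G}}^i})$ along each lower stratum, and extracting some uniform multiplicity-drop statement purely from the data $(\mathrm{G},V)$. These are exactly the reasons the statement remains conjectural. In the one case the paper does treat in full (namely $\tau(\GG)$), the argument depended on an explicit quartic equation, a direct Hessian computation bounding the multiplicity by two everywhere, and a hand-tailored polar-divisor analysis---none of which has an evident abstract substitute. Your inductive framing (smooth centers via the smaller problem, an auxiliary polar divisor, $\mathrm{G}$-equivariance plus semi-continuity) is a sensible organizing principle and mirrors the architecture of the proof of Theorem~\ref{desingtau}, but until the tangent-cone control is actually supplied it is a program rather than a proof.
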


This conjecture is well-known for all square, symmetric and skew-symmetric
determinantal varieties (see Example $2.1.3$ of \cite{abuafcategorical}).
Theorem \ref{desingtau} and Proposition \ref{desingsigma} show
that the conjecture holds for the pair $(\mathbb{C}^* \times \mathrm{Sp_6}(\AA),
V_{\AA})$.

\subsection{Some vanishing lemmas}
In this section, we state the vanishing lemmas we will need for the proof of our
main theorem. Recall that $E_1$, the exceptional divisor of the map $\pi_1 :
X_1 \rightarrow \tau(\GG)$, is a fibration into doubled $\mathbb{P}^{3\ma +2}$
over $\GG$. As for $E_2$, the exceptional divisor of the map $\pi_2 : X_2
\rightarrow X_1$, it is a fibration in smooth quadrics of dimension $\ma + 1$
over the strict transform of $\sigma_+(\GG)$ through $\pi_1$. 

We also recall some notations we used in the proof of Theorem
\ref{desingtau}.
$\e12$ denotes the total transform of $E_1$ through $\pi_2$. Since the
intersection of the proper transform of $\sigma_+(\GG)$ through $\pi_1$ (which
we denote by $\pi_1^*\sigma_+(\GG)$) with $E_1$ is proper, the divisor $\e12$ is
also the blow-up of $E_1$ along $\pi_1^*\sigma_+(\GG) \cap E_1$. The divisor
$E_{1,2}$ the intersection $\e12 \cap E_2$, which is also the exceptional
divisor of the blow-up of $E_1$ along $\pi_1^*\sigma_+(\GG) \cap E_1$. The
morphism $\pi$ is the composition $\pi_1 \circ \pi_2$ and $\pi_{1,2}$ is the
restriction
of $\pi_2$ to $\e12$. Finally, we denote by $\tilde{\pi}_1$ (resp.
$\tilde{\pi}_2, \tilde{\pi}$ and $\tilde{\pi}_{1,2}$) the restriction of
$\pi_1$
(resp. $\pi_2, \pi$ and $\pi_{1,2}$) to $E_1$ (resp. $E_2, \e12$ and
$E_{1,2}$). We summarize these notations in the following diagrams (which
already appeared in the proof of \ref{desingtau}:

\begin{equation*}
\xymatrix{ 
E_2 \ar@{^{(}->}[rr]^{i_2} \ar[dd]^{\tilde{\pi}_2} & & X_2 \ar[dd]^{\pi_2}
\ar@/^6pc/[dddd]^{\pi} \\
& & \\
\pi_1^*\sigma_+(\GG) \ar@{^{(}->}[rr]^{j_2} & & X_1 \ar[dd]^{\pi_1} \\
& & \\
& & \tau(\GG)}
\end{equation*}

\begin{equation*}
\xymatrix{ 
E_2 \ar[dd]^{\tilde{\pi}_2} & & \ar@{_{(}->}[ll]_{i_{2,1}} E_{1,2}
\ar@{^{(}->}[rr]^{i_{1,2}}
\ar[dd]^{\tilde{\pi}_{1,2}} & & E_1^{(2)} \ar[dd]^{\pi_{1,2}}
\ar@/^6pc/[dddd]^{\tilde{\pi}}
\\
& & \\
\pi_1^*\sigma_+(\GG) & & \ar@{_{(}->}[ll]_{j_2} \pi_1^*\sigma_+(\GG) \cap E_1
\ar@{^{(}->}[rr]^{\kappa_1} & & E_1 \ar[dd]^{\tilde{\pi}_1} \\
& & & & \\
& & & & \GG}
\end{equation*}

We start with transformation formulas for the canonical bundle through the maps
$\pi_1$ and $\pi_2$.

\begin{lem} \label{formula-canonicalII}
We have the formulas:
\begin{equation*}
\w_{X_1} = \pi_1^*\w_{\tau(\GG)} \ot \OO_{X_1}((3\ma +1)E_1),
\end{equation*}
and
\begin{equation*}
\w_{X_2} = \pi_2^*\w_{X_1} \ot \OO_{X_2}(\ma E_2).
\end{equation*}
\end{lem}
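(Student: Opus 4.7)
The plan is to deduce both formulas by realizing $X_1$ and $X_2$ as strict transforms of hypersurfaces in successive blow-ups of the smooth ambient projective space $P = \mathbb{P}(V_{\AA})$, and then combining the standard canonical bundle formula for a blow-up of a smooth variety along a smooth center with adjunction. The only geometric input that is not formal is the generic multiplicity of $\tau(\GG)$ along each of the centers $\GG$ and $\sigma_+(\GG)$, and both numbers are already computed in Step $1$ of the proof of Theorem \ref{desingtau}.

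For the first formula, I would let $p : \tilde{P} \rightarrow P$ be the blow-up of $P$ along $\GG$, with exceptional divisor $E_P$. Since $\GG$ is smooth of codimension $3\ma + 4$ in $P$, the standard formula gives $\w_{\tilde P} = p^* \w_P \ot \OO_{\tilde P}((3\ma+3) E_P)$. The strict transform of the hypersurface $\tau(\GG) \subset P$ is exactly $X_1$; by Step $1$ the tangent cone to $\tau(\GG)$ at a point of $\GG$ is a double hyperplane, so the generic multiplicity of $\tau(\GG)$ along $\GG$ equals $2$, hence $p^* \tau(\GG) = X_1 + 2 E_P$ as Cartier divisors on $\tilde P$. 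Adjunction for the hypersurface $X_1 \subset \tilde P$, together with the identification $E_P|_{X_1} = E_1$ (both are cut out by $\mathcal{I}_{\GG} \cdot \OO_{X_1}$), then yields
\begin{equation*}
\w_{X_1} = p^*(\w_P \ot \OO_P(\tau(\GG)))|_{X_1} \ot \OO_{\tilde P}((3\ma + 1) E_P)|_{X_1} = \pi_1^* \w_{\tau(\GG)} \ot \OO_{X_1}((3\ma + 1) E_1).
\end{equation*}

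For the second formula, I would run the same argument one level higher. Let $r : \tilde{\tilde P} \rightarrow \tilde P$ be the blow-up along $Y_2 = \pi_1^* \sigma_+(\GG)$, which is smooth by Proposition \ref{desingsigma} and has codimension $\ma + 3$ in $\tilde P$. The standard formula gives $\w_{\tilde{\tilde P}} = r^* \w_{\tilde P} \ot \OO((\ma + 2) E'_P)$, where $E'_P$ is the new exceptional divisor. Since the tangent cone to $\tau(\GG)$ at a general point of $\sigma_+(\GG)$ is a smooth quadric cone (Proposition \ref{desing} together with Step $1$), the generic multiplicity of $\tau(\GG)$ along $\sigma_+(\GG)$ is $2$; as $\pi_1$ is an isomorphism over $\sigma_+(\GG) \setminus \GG$, the hypersurface $X_1 \subset \tilde P$ has the same generic multiplicity $2$ along $Y_2$, so $r^* X_1 = X_2 + 2 E'_P$. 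Adjunction, together with $E'_P|_{X_2} = E_2$, then gives $\w_{X_2} = \pi_2^* \w_{X_1} \ot \OO_{X_2}(\ma E_2)$.

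The only mild obstacle I anticipate is confirming that the strict-transform equation $r^* X_1 = X_2 + m E'_P$ genuinely uses the generic multiplicity $m$ of $X_1$ along the irreducible center $Y_2$. This is standard: the identity is a local computation with a single defining equation of the hypersurface at the generic point of $Y_2$, and potential jumps of multiplicity at special points of $Y_2$ do not affect the divisorial identity, which lives on the smooth variety $X_2$ (Theorem \ref{desingtau}) and extends from any dense open set.
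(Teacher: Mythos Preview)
Your argument is correct, and it is genuinely different from the paper's. The paper does not embed $X_1$ in the blow-up of the smooth ambient $\mathbb{P}(V_{\AA})$; instead it works intrinsically on $X_1$. Because $E_1$ is non-reduced, the paper explicitly worries that $\omega_{X_1}\otimes\pi_1^*\omega_{\tau(\GG)}^{-1}$ might a priori be a power of some Cartier divisor $E_1'$ with $|E_1'|_{red}=|E_1|_{red}$ but $E_1'\neq E_1$. To rule this out it proves that $\pi_1$ is the contraction of a negative extremal ray (computing $\mathrm{Pic}$ of the doubled $\mathbb{P}^{3\ma+2}$ fiber and invoking the relative Cone theorem), so that any line bundle trivial on the fibers is a pullback, and then identifies the pullback via Grauert--Riemenschneider and Grothendieck duality. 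Your approach sidesteps this entirely: since $\OO_{\tilde P}(-E_P)|_{X_1}=\mathcal{I}_{\GG}\cdot\OO_{X_1}=\OO_{X_1}(-E_1)$ by construction of the two blow-ups, the restriction of $\OO_{\tilde P}((3\ma+1)E_P)$ is literally $\OO_{X_1}((3\ma+1)E_1)$, and the question of non-reducedness never arises. This is more elementary and more transparent. What the paper's route buys in exchange is the statement that $\pi_1$ is a Mori contraction, which is reused later (implicitly in Appendix~B) to prove $\mathbb{Q}$-factoriality of $\tau(\GG)$.

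One small wording issue in your last paragraph: the identity $r^*X_1=X_2+2E'_P$ is an identity of Cartier divisors on the smooth ambient $\tilde{\tilde P}$, not on $X_2$. The point is that $\tilde{\tilde P}$ is smooth and $Y_2$ is irreducible, so $E'_P$ is prime and the coefficient of $E'_P$ is the order of vanishing of a local equation of $X_1$ along the generic point of $Y_2$; this is precisely the generic multiplicity, regardless of jumps at special points. Your conclusion is right, just adjust where the identity lives.
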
  

The existence of an integer $p$ such that $\w_{X_2} = \pi_2^*\w_{X_1} \ot
\OO_{X_2}(pE_2)$ is trivial as $E_2$, the scheme-theoretic exceptional locus
of $\pi_2$, is an integral divisor on $X_2$. The existence of such a formula for
$\w_{X_1}$ is less obvious. Indeed, since $E_1$ is not reduced, one could
imagine
an equality $\w_{X_1} = \pi_1^*\w_{\tau(\GG)} \ot \OO_{X_1}(qE_1')$, where
$E_1'$
is a Cartier divisor on $X_1$ with $|E_1|_{red} = |E_1'|_{red}$, but such that
$qE_1'$ is not a multiple of $E_1$.

\begin{proof}
We start with the formula for $\w_{X_1}$. We divide the proof of this formula
into two steps:
\begin{itemize}
\item we prove that the blow-up $ \pi_1 : X_1 \rightarrow \tau(\GG)$ is the
contraction of a negative extremal ray (see \cite{mori-kollar}, section $3$),
\item we prove that the bundle $\w_{X_1} \ot \pi_1^*\w_{\tau(\GG)}^{-1} \ot
\OO_{X_1}((-3\ma -1)E_1)$ is trivial.
\end{itemize}

\textbf{Step 1 : The blow-up $X_1 \rightarrow \tau(\GG)$ is a Mori
contraction.}

\bigskip

\noindent Let $\widetilde{\mathbb{P}(V_{\AA})}$
be the blow-up of $\mathbb{P}(V_{\AA})$ along $\GG$ and denote by $H_1$ the
exceptional divisor of that blow-up. We have $E_1 = H_1|_{X_1}$. The map $ q_1 :
H_1 \rightarrow \GG$ is a projective bundle of relative dimension $3\ma +3$ over
$\GG$ and the restriction of $H_1$ to any fiber $q_1^{-1}(x) =
\mathbb{P}^{3\ma+3}$ is $\OO_{\mathbb{P}^{3\ma+3}}(1)$. As a consequence, we
have the equality:
\begin{equation*}
{E_1}|_{\pi_1^{-1}(x)} = \OO_{\mathbb{P}^{3\ma+3}}(1)|_{\pi_1^{-1}(x)},
\end{equation*}
for all $x \in \GG$. We will denote this last bundle by
$\OO_{\pi_1^{-1}(x)}(1)$. Recall that the
proof of Theorem \ref{desingtau} shows that $\pi_1^{-1}(x)$ is a doubled
$\mathbb{P}^{3\ma +2}$ in $q_1^{-1}(x)$. Thus we have:
\begin{equation*}
\w_{\pi_1^{-1}(x)} = \OO_{\pi_1^{-1}}(-3\ma -2).
\end{equation*}
By the adjunction formula we have $\w_{E_1} = \w_{X_1} \ot \OO_{E_1}(E_1)$. The
morphism $\tilde{\pi}_1 : E_1 \rightarrow \GG$ is flat and $\GG$ is smooth so
that the normal bundle $N_{\tilde{\pi}_1^{-1}(x)/E_1}$ is trivial. By the
adjunction formula, we get: 
\begin{equation*}
\w_{E_1}|_{\tilde{\pi}_1^{-1}(x)} = \w_{\tilde{\pi}_1^{-1}(x)} =
\OO_{\tilde{\pi}_1^{-1}(x)}(-3\ma -2)
\end{equation*}
Let $NE_{\pi_1}(X)$ be the cone of effective $1$-cycles in $X$ contracted by
$\pi_1$ and let $R \in NE_{\pi_1}(X)$ be the numerical class of a line in
$\pi_1^{-1}(x)$. The above formula shows that:
\begin{equation*}
\w_{X_1}.R <0,
\end{equation*}
so that $R$ is a negative ray for $X_1$ with respect to $\pi_1$. Let us prove
that $NE_{\pi_1}(X) = \langle R \rangle.$ We have an exact sequence:

\begin{equation*}
0 \rightarrow \OO_{\mathbb{P}^{3\ma + 2}}(-1) \rightarrow \OO_{2\mathbb{P}^{3\ma
+ 2}} \rightarrow  \OO_{\mathbb{P}^{3\ma + 2}} \rightarrow 0,
\end{equation*}
where $\OO_{2\mathbb{P}^{3\ma + 2}}$ is the structure sheaf of a doubled
$\mathbb{P}^{3\ma+2}$ in a $\mathbb{P}^{3\ma+3}$. Note that
$\OO_{\mathbb{P}^{3\ma + 2}}(-1)$ consists of nilpotent elements of
$\OO_{2\mathbb{P}^{3\ma
+ 2}}$, so we can lift the above exact sequence to an exact sequence of groups
sheaves (see \cite{SGA2}, Exposé XI, section $1$):
\begin{equation*}
0 \rightarrow \OO_{\mathbb{P}^{3\ma + 2}}(-1) \rightarrow \OO_{2\mathbb{P}^{3\ma
+ 2}}^{\times} \rightarrow  \OO_{\mathbb{P}^{3\ma + 2}}^{\times} \rightarrow 1,
\end{equation*}
where $\OO_{X}^{\times}$ is the sheaf of units of the scheme $X$. Taking the
long exact sequence of cohomology, we find that:
\begin{equation*}
H^1(2\mathbb{P}^{3\ma +2}, \OO_{2\mathbb{P}^{3\ma + 2}}^{\times}) =
H^1(\mathbb{P}^{3\ma +2}, \OO_{\mathbb{P}^{3\ma
+ 2}}^{\times}),
\end{equation*}
that is:
\begin{equation*}
\mathrm{Pic}(2\mathbb{P}^{3\ma +2}) = \mathrm{Pic}(\mathbb{P}^{3\ma +2}) =
\mathbb{Z}. 
\end{equation*}
Thus, we see that the cone of effective $1$-cycles (modulo numerical
equivalence) on $\pi_1^{-1}(x)$ is of dimension $1$. Since the morphism
$\tilde{\pi}_1 : E_1 \rightarrow \sigma_+(\GG)$ is flat, the cone
$NE_{\pi_1}(X)$ is also of dimension $1$. Hence, we have:
\begin{equation*}
NE_{\pi_1}(X) = \langle R \rangle,
\end{equation*}
so that $R$ is a negative extremal ray for $NE_{\pi_1}(X)$. As $X_1$ is
Gorenstein with rational
singularities (hence canonical singularities, by \cite{kollar} proposition
$11.13$), we can apply the relative Cone theorem to $X_1$ and $R$ (see
\cite{mori-kollar}, Theorem $3.25$) and we find a commutative diagram:

\begin{equation*}
\xymatrix{ 
X_1 \ar[ddr]^{\pi_1} \ar[rr]^{p} & & Y \ar[ddl]^{q} \\
& & \\
 & \tau(\GG) & \\}
\end{equation*}
where $p$ is the contraction of the extremal ray $R$. We know that
$NE_{\pi_1}(X) = \langle R \rangle$. Therefore, for all $x \in \GG$, all the
curves lying in $\pi_1^{-1}(x)$ are contracted by $p$.

We want to demonstrate that $q$ is an isomorphism. Let $x \in \GG$ and assume
that $\dim p(\pi_1^{-1}(x)) > 0$. We can find two curves $C \in
p(\pi_1^{-1}(x))$ and $C' \subset \pi_1^{-1}(x)$ such that $p(C') = C$. But this
is a contradiction since all curves lying in $\pi_1^{-1}(x)$ are contracted by
$p$. As a consequence, for all $x \in \GG$, the scheme $p(\pi_1^{-1}(x))$ is
a point. We deduce that $q : Y \rightarrow \tau(\GG)$ is a birational finite
morphism such that $\RR q_* \OO_{Y} = \OO_{\tau(\GG)}$. The variety $\tau(\GG)$
is normal, so by the Main Theorem of Zariski, the morphism $q$ is an
isomorphism. We deduce that $\pi_1 : X_1 \rightarrow \tau(\GG)$ is the
contraction of the extremal ray
generated by $R$. 

\bigskip
\textbf{Step 2 : The bundle $\w_{X_1} \ot \pi_1^*\w_{\tau(\GG)}^{-1} \ot
\OO_{X_1}((-3\ma -1)E_1)$ is trivial.}

\bigskip

\noindent Let $L = \w_{X_1} \ot \OO_{X_1}((-3\ma-1)E_1)$. The formulae for the
restrictions of $\w_{X_1}$ to $E_1$ and $\w_{E_1}$ to $\pi_1^{-1}(x)$ show that
$L_{\pi_1^{-1}(x)} = \OO_{\pi_1^{-1}(x)}$, for any $x \in \GG$. Thus, we
can apply again the relative Cone Theorem and we get $L = \pi_1^* L'$ for some
line bundle $L'$ on $\tau(\GG)$.

Let us prove that $L' = \w_{\tau\GG}$. Since $\RR {\pi_1}_* \OO_{X_1} =
\OO_{\tau(\GG)}$ and $\dim X_1 = \dim \tau(\GG)$, Grothendieck duality shows
that the complex $\RR {\pi_1}_* \w_{X_1}[\dim X_1]$ is a dualizing complex for
$\tau(\GG)$. We apply the
Grauert-Riemenschneider theorem to $\pi_1$ and we get $\Ri {\pi_1}_* \w_{X_1} =
0$ for $i>0$. As a consequence, we have ${\pi_1}_* \w_{X_1} = \w_{\tau(\GG)}$.
Moreover, the divisor $E_1$ is effective and contracted by $\pi_1$. The variety
$\tau(\GG)$ being normal and $\pi_1$ being birational, the sheaf $\pi_*
\OO_{X_1}((3 \ma +2)E_1)$ is trivial. Finally, we apply ${\pi_1}_*$ on both
sides of
the equality : 
\begin{equation*}
\w_{X_1} = \pi_1^*(L') \ot \OO_{X_1}(3\ma +1),
\end{equation*}
 and the projection formula gives:
\begin{equation*}
{\pi_1}_* \w_{X_1} = L'. 
\end{equation*}
As we showed that ${\pi_1}_* \w_{X_1} = \w_{\tau(\GG)}$, this concludes the
proof that:
\begin{equation*}
\w_{X_1} = \pi_1^*\w_{\tau(\GG)} \ot \OO_{X_1}((3\ma+1)E_1).
\end{equation*}

\bigskip

The formula for $\w_{X_2}$ is proved in a similar fashion, but is much easier.
Indeed, as $E_2$ is the scheme-theoretic exceptional locus of $\pi_2$ and is
an integral divisor on $X_2$, there exists an integer $p$ such that $\w_{X_2} =
\pi_2^*\w_{X_1} \ot \OO_{X_2}(pE_2)$. We determine the integer $p$ by
restricting
this equality to the fibers of $\pi_2|_{E_2} : E_2 \rightarrow \pi^*_1
\sigma_+(\GG)$. The adjunction formula shows again that
\begin{equation*}
\w_{X_2} \ot \OO_{E_2}(E_2) \ot \OO_{\tilde{\pi_2}^{-1}(x)} =
\w_{\tilde{\pi_2}^{-1}(x)}
\end{equation*}
and we conclude using the fact that $\tilde{\pi_2}^{-1}(x)$ is a smooth quadric
of dimension $\ma +1$.
\end{proof}

\begin{prop} \label{vanishingII}
We have the vanishings:

\itemize
\item ${\Ri \tilde{\pi}_{2}}_* \OO_{E_2}(kE_2) = 0$, for all $i \geq 0$ and for
all $1 \leq k \leq \ma$,
\item $\Ri \tilde{\pi}_* \OO_{\e12}(k\e12) = 0$, for all $i \geq 0$ and all $1
\leq k \leq 3\ma+1$,   
\item ${\Ri {\tilde{\pi}_{1,2}}}_* \OO_{E_{1,2}}(kE_{1,2})$, for all $i \geq 0$
and for all $1 \leq k \leq \ma$.
\end{prop}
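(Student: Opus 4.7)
The plan is to treat the three vanishings in parallel: (i) and (iii) fall out of a common fiber-cohomology argument on smooth quadric fibrations, while (ii) first requires a Leray reduction to a vanishing on $E_1 \to \GG$.

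For (i) and (iii), recall from the discussion preceding the statement that both $\tilde{\pi}_2 : E_2 \to \pi_1^*\sigma_+(\GG)$ and $\tilde{\pi}_{1,2} : E_{1,2} \to \pi_1^*\sigma_+(\GG)\cap E_1$ are flat smooth quadric fibrations of relative dimension $\ma+1$. By writing $E_2 = X_2 \cap H$, where $H$ is the exceptional divisor of the blow-up of $\widetilde{\mathbb{P}(V_{\AA})}$ along $\pi_1^*\sigma_+(\GG)$, one sees that $\OO_{X_2}(E_2)$ restricts on each fiber $Q^{\ma+1}\subset \mathbb{P}^{\ma+2}$ to $\OO_Q(-1)$; the same line bundle governs case (iii) (viewing $E_{1,2}$ as the Cartier divisor $E_2\cap\e12$ inside $\e12$). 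The proof of both vanishings therefore reduces to showing $H^i(Q^{\ma+1},\OO_Q(-k))=0$ for all $i$ and all $1\leq k\leq \ma$, which follows from the short exact sequence $0\to \OO_{\mathbb{P}^{\ma+2}}(-k-2)\to \OO_{\mathbb{P}^{\ma+2}}(-k)\to \OO_Q(-k)\to 0$ combined with the standard vanishings on projective space. Cohomology-and-base-change along the flat proper quadric fibration then concludes (i) and (iii).

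For (ii), the key initial observation is that the multiplicity of $E_1$ at the generic point of $\pi_1^*\sigma_+(\GG)$ is zero (that generic point lies over $\sigma_+(\GG)\setminus\GG$, hence is not in $E_1$), so $\pi_2^*E_1=\e12$ and hence $\OO_{\e12}(k\e12)=\pi_{1,2}^*\OO_{E_1}(kE_1)$. Applying $R\pi_{2*}$ to $0\to \OO_{X_2}(-\e12)\to \OO_{X_2}\to \OO_{\e12}\to 0$ and using $R\pi_{2*}\OO_{X_2}=\OO_{X_1}$ together with the projection formula yields $R\pi_{1,2*}\OO_{\e12}=\OO_{E_1}$; a second application of the projection formula then gives
\begin{equation*}
R\tilde{\pi}_{*}\OO_{\e12}(k\e12)=R\tilde{\pi}_{1*}\OO_{E_1}(kE_1).
\end{equation*}
To finish, I use that $\tilde{\pi}_1:E_1\to \GG$ is flat with fiber at $x\in\GG$ equal to the doubled hyperplane $Y_x=2\mathbb{P}^{3\ma+2}\subset\mathbb{P}^{3\ma+3}$, on which $\OO(E_1)$ restricts to $\OO_{\mathbb{P}^{3\ma+3}}(-1)|_{Y_x}$. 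Since $Y_x$ is the vanishing locus of the square of a linear form, the short exact sequence $0\to \OO_{\mathbb{P}^{3\ma+3}}(-k-2)\to \OO_{\mathbb{P}^{3\ma+3}}(-k)\to \OO_{Y_x}(-k)\to 0$ and the vanishing of the relevant cohomology of line bundles on $\mathbb{P}^{3\ma+3}$ give $H^i(Y_x,\OO_{Y_x}(-k))=0$ for all $i$ and all $1\leq k\leq 3\ma+1$; cohomology-and-base-change concludes.

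The most delicate step I foresee is the identification $R\pi_{2*}\OO_{X_2}=\OO_{X_1}$, which presupposes that $X_1$ has rational singularities. This is expected since $X_1$ is the blow-up of the Gorenstein rational hypersurface $\tau(\GG)$ along the smooth center $\GG$, but a careful verification (or, alternatively, factoring the entire Part (ii) argument through the single birational morphism $\pi:X_2\to\tau(\GG)$, for which rationality of $\tau(\GG)$ gives $R\pi_*\OO_{X_2}=\OO_{\tau(\GG)}$ directly) is the step requiring the most care.
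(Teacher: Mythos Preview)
Your argument is correct, but it follows a genuinely different route from the paper's. The paper never computes fiber cohomology directly; instead it works on the ambient space $X_2$ and appeals to relative Kawamata--Viehweg (for $k<\ma$) and Grauert--Riemenschneider (for $k=\ma$) to get $\Ri\pi_{2*}\OO_{X_2}(kE_2)=0$ for $i>0$, then feeds this into the short exact sequence $0\to\OO_{X_2}((k-1)E_2)\to\OO_{X_2}(kE_2)\to\OO_{E_2}(kE_2)\to 0$ together with ${\pi_2}_*\OO_{X_2}(kE_2)=\OO_{X_1}$ (normality of $X_1$) to extract all three vanishings. Point (iii) is handled via the Koszul resolution of $\OO_{E_{1,2}}$ on $X_2$ rather than by a separate quadric-bundle computation. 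Your approach is more elementary---it needs only the cohomology of line bundles on projective space and cohomology-and-base-change along a flat proper morphism---whereas the paper's approach is more uniform and avoids checking flatness or identifying the fiberwise restriction of the conormal line bundle.

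Regarding the step you flag as delicate: the identity $\RR\pi_{2*}\OO_{X_2}=\OO_{X_1}$ is indeed used (the paper uses it too, in the form $\RR\pi_{2*}\OO_{X_2}(\e12)=\OO_{X_1}(E_1)$), and the paper's own Kawamata--Viehweg argument with $k=0<\ma$ is precisely what gives the higher vanishing; the $R^0$ part comes from normality of $X_1$, which the paper asserts and uses throughout (notably in the proof of the canonical-bundle formula). So your concern is legitimate, but the cleanest way to discharge it is to borrow that one application of relative Kawamata--Viehweg from the paper's approach rather than to invoke rationality of $X_1$ as an independent input or to reroute through $\pi:X_2\to\tau(\GG)$, which would not by itself give the reduction to $E_1$ that you need.
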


\begin{proof}
The three points are more or less direct consequences of the Kawamata-Viehweg
relative vanishing theorem and of the Grauert-Riemenschneider vanishing
theorem. 

For the first point, we have $\w_{X_2} = \pi_2^*\w_{X_1} \ot \OO_{X_2}(\ma E_2)$
and $-E_2$ is relatively ample with respect to $\pi_2$. Since $X_2$ is
Gorenstein with rational singularities (in fact it is smooth), we apply the
Kawamata-Viehweg relative vanishing theorem and we get:
\begin{equation*}
\Ri {\pi_2}_* \OO_{X_2}(kE_2) = 0,
\end{equation*}
for all $i>0$ and for all $k<\ma$. The vanishing:
\begin{equation*}
\Ri{\pi_2}_* \OO_{X_2}(\ma E_2) = 0,
\end{equation*}
for all $i>0$, is a consequence of the theorem of Grauert-Riemenschneider.

Now, for all $k \in \mathbb{Z}$, we have an exact sequence:
\begin{equation*}
0 \rightarrow \OO_{X_2}((k-1)E_2) \rightarrow \OO_{X_2}(kE_2) \rightarrow
\OO_{E_2}(kE_2) \rightarrow 0.
\end{equation*}
We take the long exact sequence associated to the functor $\RR {\pi_2}_*$ and
taking into account the above vanishing, we find:
\begin{equation*}
{\Ri{\tilde{\pi}_2}}_* \OO_{E_2}(kE_2) = 0,
\end{equation*}
for all $i>0$ and for all $k \leq \ma$.

\bigskip

\noindent Finally,we want to prove some vanishing for $\tilde{\pi}_{2*}
\OO_{E_2}(kE_2)$. Notice first that $E_2$ si an effective divisor contracted by
the birationnal morphism $\pi_2$. The variety $X_1$ being normal, we have:
\begin{equation*}
{\pi_2}_* \OO_{X_2}(kE_2) = \OO_{X_1},
\end{equation*}
for all $k \geq 0$. Thus, the long exact sequence associated to the
above short exact sequence and the vanishing results already proved imply:
\begin{equation*}
\tilde{\pi}_{2*} \OO_{E_2}(kE_2) = 0,
\end{equation*}
for all $k \geq 1$. This concludes the first point.

\bigskip

The second point is proved in the same manner with the following observation. We
have ${\RR {\pi_2}}_* \OO_{X_2}(\e12) = \OO_{X_1}(E_1)$ by the projection
formula. Thus, to prove the
vanishing result for $\Ri {\tilde{\pi}}_* \OO_{\e12}(k\e12)$, it is sufficient
to prove it for ${\Ri {\tilde{\pi}_1}}_* \OO_{E_1}(kE_1)$. This is done exactly
in
the same way as for the first point of the proposition.

\bigskip

The third point needs a slightly more involved argument. The intersection $\e12
\cap E_2 = E_{1,2}$ is proper, so we have a resolution:
\begin{equation*}
 0 \rightarrow \OO_{X_2}(-\e12-E_2) \rightarrow
\OO_{X_2}(-\e12)\oplus\OO_{X_2}(-E_2) \rightarrow \OO_{X_2} \rightarrow
\OO_{E_{1,2}} \rightarrow 0.
\end{equation*}
We tensor this resolution by $\OO_{X_2}(kE_2)$, for any integer $k$, and we get:
\begin{equation*}
\begin{split}
& 0 \rightarrow \OO_{X_2}(-\e12+(k-1)E_2) \rightarrow
\OO_{X_2}(-\e12)\oplus\OO_{X_2}((k-1)E_2) \rightarrow \OO_{X_2}(kE_2) \\
& \rightarrow \OO_{E_{1,2}}(kE_{1,2}) \rightarrow 0 \\
\end{split}
\end{equation*}
Recall that the Kawamata-Viehweg relative vanishing theorem and the
Grauert-Riemenschneider vanishing theorem imply that
\begin{equation*}
\Ri {\pi_{2}}_* \OO_{X_2}(kE_2) = 0,
\end{equation*}
for all $i>0$ and all $k \leq \ma$. Finally, we chop the above resolution into
two shorts exact sequences. We take the long exact sequences associated to the
functor $\RR {\pi_2}_*$ for these two short exact sequences and we find:
\begin{equation*}
{\Ri {\tilde{\pi}_{1,2}}}_* \OO_{E_{1,2}}(kE_{1,2}) = 0,
\end{equation*}
for all $i>0$ and all $k \leq \ma$.

\bigskip

The vanishing:
\begin{equation*}
\tilde{\pi}_{1,2*} \OO_{E_{1,2}}(kE_{1,2}) = 0,
\end{equation*}
for all $k \geq 1$ is proved as for the first point of the proposition. Indeed,
we have:
\begin{equation*}
{\pi_2}_* \OO_{X_2}(kE_2) = \OO_{X_1},
\end{equation*}
for all $k \geq 0$. We again chop the above long exact sequence into two short
exact sequences and we go on as in the proof of the first point of the
proposition.

\end{proof}

\section{Proof of the main theorem}
In this section we are going to prove our main result : 

\begin{theo} \label{mainII}
The variety $\tg$ admits a categorical crepant resolution of singularities. 
\end{theo}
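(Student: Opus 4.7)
The strategy is to apply Kuznetsov's framework for categorical crepant resolutions (\cite{kuz}), constructing $\T$ as an admissible subcategory of $\DB(X_2)$ obtained by discarding, on each exceptional divisor, the excess produced by the positive discrepancy of $\pi$. The canonical bundle formulas of Lemma \ref{formula-canonicalII} furnish these discrepancies ($\ma$ along $E_2$ relative to $\pi_2$, and $3\ma+1$ along $E_1$ relative to $\pi_1$), and the ranges $1\le k\le \ma$, $1\le k\le 3\ma+1$ and $1\le k\le \ma$ appearing in Proposition \ref{vanishingII} are exactly those needed to verify the semi-orthogonality and admissibility hypotheses of the machinery.

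I would proceed in two stages, matching the two blow-ups. Stage 1 treats $\pi_2 : X_2 \to X_1$. Since $\tilde{\pi}_2 : E_2 \to Y_2 = \pi_1^*\sigma_+(\GG)$ is a smooth quadric bundle of relative dimension $\ma+1$, Kuznetsov's decomposition of the derived category of a quadric bundle provides a semi-orthogonal decomposition of $\DB(E_2)$ involving $\DB(Y_2, \mathrm{Cl}_0)$ together with $\ma+1$ twisted copies of $\tilde{\pi}_2^*\DB(Y_2)$, where $\mathrm{Cl}_0$ denotes the sheaf of even parts of the relative Clifford algebra. The first vanishing of Proposition \ref{vanishingII}, combined with the projection formula and Grothendieck duality, guarantees that the collection $\{i_{2*}(\tilde{\pi}_2^*\DB(Y_2)\ot \OO_{E_2}(-kE_2))\}_{1\le k\le \ma}$ is semi-orthogonal and admissible in $\DB(X_2)$. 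Its left orthogonal $\T_1\subset \DB(X_2)$ is then a categorical crepant resolution of $X_1$.

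Stage 2 treats $\pi_1 : X_1 \to \tau(\GG)$. Starting from $\T_1$, I would peel off $3\ma+1$ further components, namely the pushforwards to $X_2$ of $\tilde{\pi}^*\DB(\GG)\ot \OO_{\e12}(-k\e12)$ for $1\le k\le 3\ma+1$, to obtain an admissible subcategory $\T\subset \T_1 \subset \DB(X_2)$. The second vanishing of Proposition \ref{vanishingII} provides the required semi-orthogonality of these twists on $\e12$, while the third vanishing controls their restriction to the intersection $E_{1,2}=\e12\cap E_2$ and ensures orthogonality to the Stage-1 components already removed. Weak crepancy of $\T$, namely $\LL \pi_{\T}^*\F \simeq \LL\pi_{\T}^!\F$ for $\F\in \DP(\tau(\GG))$, will then follow from Lemma \ref{formula-canonicalII} and a Grothendieck duality computation, once all orthogonality relations are in place.

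The principal obstacle is the compatibility of the two stages along $E_{1,2}$. The Lefschetz-type families extracted from $\DB(E_2)$ and $\DB(\e12)$ must agree after restriction to $E_{1,2}$, and unlike $E_2$, the divisor $\e12$ is not a projective or quadric bundle over $\GG$ (its fibers are blow-ups of doubled projective spaces, as observed in Step 1 of the proof of Theorem \ref{desingtau}). It is no accident that the third point of Proposition \ref{vanishingII} has range $1\le k\le \ma$ rather than $1\le k\le 3\ma+1$: what one really needs to control is the intersection of the two orthogonal complements, and this intersection has depth equal to the Stage-1 discrepancy $\ma$. Matching the two Lefschetz decompositions at $E_{1,2}$ and propagating the resulting compatibility through the non-flat structure of $\e12\to\GG$ is the delicate heart of the construction.
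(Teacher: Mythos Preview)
Your overall strategy matches the paper's: build an admissible subcategory of $\DB(X_2)$ by removing, on each exceptional divisor, a Lefschetz-type collection of length equal to the discrepancy, and then verify weak crepancy via the filtration from $\pi^*\F$ to $\pi^!\F$. The paper does exactly this (Proposition~\ref{key}), and the weak crepancy argument is the one you sketch.

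Two points of divergence are worth noting. First, the Clifford-algebra description of $\DB(E_2)$ is unnecessary: the paper never identifies the residual piece $\mathcal{E}_2$; it only needs that the $\ma$ twisted copies of $\tilde{\pi}_2^*\DB(Y_2)$ are admissible and mutually semi-orthogonal, which follows directly from Proposition~\ref{vanishingII}. Bringing in $\DB(Y_2,\mathrm{Cl}_0)$ adds nothing.

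Second, and more substantively, your Stage-2 components are twisted only by multiples of $\e12$, whereas the paper defines
\[
\A_l = {i_1}_*\bigl(\tilde{\pi}^*\DB(\GG)\otimes \OO_{\e12}(l\,\e12 + \ma\,E_2)\bigr).
\]
The extra twist by $\ma E_2$ is not cosmetic: it is precisely what makes the compatibility at $E_{1,2}$ go through. The adjunction formula gives $\w_{\e12}=\OO_{\e12}(\ma E_2)\otimes(\text{something in }T_{\e12})$, so Serre duality on $\e12$ converts
\[
\HH\bigl({i_{1,2}}_*(T_{E_{1,2}}\otimes\OO(kE_{1,2})),\,T_{\e12}\otimes\OO(l\e12+\ma E_2)\bigr)
\]
into a Hom in the opposite direction, which then vanishes by the third item of Proposition~\ref{vanishingII} and the commutative square over $E_1\cap\pi_1^*\sigma_+(\GG)$. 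Without the $\ma E_2$ shift, this Serre-duality flip is unavailable, and I do not see how the third vanishing alone would force your untwisted Stage-2 components to land in $\T_1$. You correctly flag $E_{1,2}$ as the crux, but the concrete device that resolves it is this twist; your iterative framing obscures where it should enter.
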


From now on,for any proper morphism $f : X \rightarrow Y$ of schemes of finite
type, we denote by $f_*$ the total derived functor $\RR f_* : \DB (X)
\rightarrow \DB(Y) $, by $f^*$ the total derived functor $\LL f^*
: \mathrm{D^{-}}(Y) \rightarrow \mathrm{D^{-}}(X)$ and by $f^!$ the right
adjoint to $\RR f_* : \DB(X) \rightarrow \DB(Y)$. In case we need to use
specific homology sheaves of these functors, we will denote them by $\Ri f_*,
\Li f^*$ and $\Li f^!$. If $\F, \mathcal{G}$ are two objects of $\DM(X)$, we
denote by $\F \ot \mathcal{G}$ the derived tensor product $\F \ot^L
\mathcal{G}$.

\subsection{Standard reductions}
 Denote by $i_1 : \e12 \hookrightarrow X_2$ and $i_2 : E_2 \hookrightarrow X_2$
the embeddings of the exceptional divisors. We define the following
subcategories of $\DB(X_2)$:
\begin{equation*}
\B_k = {i_2}_* \left( \tilde{\pi}_2^*\DB(\pi_1^*\sigma_+(\GG)) \ot
\OO_{E_2}(kE_2) \right),
\end{equation*}
for all $1 \leq k \leq \ma$ and:
\begin{equation*}
\A_l = {i_1}_* \left( \tilde{\pi}^*\DB(\GG) \ot \OO_{\e12}(l \e12 + \ma E_2)
\right),
\end{equation*}
for all $1 \leq l \leq 3\ma+1$. Our key proposition is the following:

\begin{prop} \label{key}
We have a semi-orthogonal decomposition:

\begin{equation*}
\DB(X_2) = \langle \A_{3\ma+1},\ldots, \A_1, \B_{\ma},\ldots, \B_1, \D_{X_2}
\rangle,
\end{equation*}
where $\D_{X_2}$ is the left orthogonal to the full admissible subcategory
generated by the $\A_l$ and $\B_k$. Moreover we have the property:
\begin{equation*}
\pi^*\DP(\tau(\GG)) \subset \D_{X_2}.
\end{equation*}
\end{prop}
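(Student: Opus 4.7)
The plan is to verify the semi-orthogonal decomposition by reducing each relevant $\Hom$-space to a push-forward controlled by Proposition \ref{vanishingII}, and to prove the inclusion $\pi^*\DP(\tau(\GG)) \subset \D_{X_2}$ by the same type of adjunction computation. For the internal semi-orthogonality of the $\B_k$'s, given $B = \tilde{\pi}_2^* G \ot \OO_{E_2}(kE_2)$ and $B' = \tilde{\pi}_2^* G' \ot \OO_{E_2}(k'E_2)$, I would apply adjunction for the smooth divisor $i_2$ together with the standard distinguished triangle relating $i_2^* i_{2*}$ to the identity and to the twist by $\OO_{E_2}(-E_2)[1]$. This reduces $\Hom_{X_2}(i_{2*} B, i_{2*} B')$ to a two-term extension involving $G \ot \tilde{\pi}_{2*}\OO_{E_2}(mE_2)$ for $m = k' - k$ and $m = k' - k + 1$; with the prescribed ordering both exponents fall inside $[1, \ma]$ and die by the first bullet of Proposition \ref{vanishingII}. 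The internal semi-orthogonality of the $\A_l$'s is entirely analogous, replacing $i_2$ by $i_1$ and invoking the second bullet in the range $[1, 3\ma + 1]$.

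For the cross semi-orthogonality $\Hom(\B_k, \A_l) = 0$, the key input is that $\e12$ and $E_2$ are smooth Cartier divisors on the smooth variety $X_2$ meeting transversally along $E_{1,2}$. Derived base change then gives $i_2^* i_{1*} \simeq i_{2,1*} i_{1,2}^*$, which lets one transport the computation to $E_{1,2}$ where the relevant push-forwards are of the shape $\tilde{\pi}_{1,2*} \OO_{E_{1,2}}(m E_{1,2})$ and are killed by the third bullet of Proposition \ref{vanishingII}. The twist by $\OO(\ma E_2)$ built into the definition of $\A_l$ is precisely what is needed to keep the resulting exponents inside the prescribed range after restriction to $E_{1,2}$.

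The inclusion $\pi^*\DP(\tau(\GG)) \subset \D_{X_2}$ is then checked by direct adjunction. For $F \in \DP(\tau(\GG))$ and $G \in \DB(\pi_1^*\sigma_+(\GG))$, the projection formula reduces $\Hom_{X_2}(\pi^* F, i_{2*}(\tilde{\pi}_2^* G \ot \OO_{E_2}(kE_2)))$ to
\begin{equation*}
\Hom\bigl(j_2^* \pi_1^* F \ot G^\vee,\ \tilde{\pi}_{2*}\OO_{E_2}(kE_2)\bigr),
\end{equation*}
which vanishes by the first bullet. The analogous $\Hom$ into $\A_l$ reduces to the vanishing of $\tilde{\pi}_* \OO_{\e12}(l\e12 + \ma E_2)$, which I would obtain from the second and third bullets by iterating the short exact sequences
\begin{equation*}
0 \to \OO_{\e12}(l\e12 + (k-1) E_{1,2}) \to \OO_{\e12}(l\e12 + k E_{1,2}) \to \OO_{E_{1,2}}(\ldots) \to 0
\end{equation*}
on $k$ from $0$ up to $\ma$ to peel off the $E_2$-contribution one step at a time.

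The main technical obstacle is the index book-keeping: every push-forward $\tilde{\pi}_{i*}\OO(m E_\bullet)$ appearing in the various $\Hom$-computations must land exactly in the prescribed ranges of Proposition \ref{vanishingII}, so the particular choices of shift $\ma$ on $E_2$ and of top index $3\ma+1$ on $\e12$ are not aesthetic but are forced on us. The crucial geometric input making the whole argument go through is the transversality of $\e12 \cap E_2 = E_{1,2}$ on the smooth ambient $X_2$, itself a consequence of Theorem \ref{desingtau} and of the fact that the two blow-ups occur along smooth, normally flat centers.
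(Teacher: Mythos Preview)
Your argument is correct and uses the same three ingredients the paper does --- adjunction along the divisor inclusions $i_1,i_2$, Tor-independent base change across the transversal intersection $E_{1,2}=\e12\cap E_2$, and the vanishing statements of Proposition~\ref{vanishingII} --- but the paper organizes the work differently. Rather than computing the $\Hom$-spaces on $X_2$ directly, the paper first proves Lemma~\ref{decompo}, which packages the same vanishing computations into Lefschetz-type semi-orthogonal decompositions of $\DB(E_2)$ and $\DB(\e12)$, and then invokes Kuznetsov's Proposition~\ref{lefschetzII} to lift these to $\DB(X_2)$; the cross orthogonality $\Hom(\B_k,\A_l)=0$ is reduced (via $i_1^* i_{2*}\simeq i_{1,2*}i_{2,1}^*$, the same base change you use in the other direction) to the decomposition of $\DB(\e12)$ already in hand, where the paper appeals to Serre duality to flip the $\Hom$. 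Your direct route is more economical and avoids the Serre-duality step; on the other hand the paper's packaging delivers admissibility of the $\A_l$ and $\B_k$ automatically from Proposition~\ref{lefschetzII}, and characterizes the left orthogonal as $\{\F: i_2^*\F\in\mathcal{E}_2\}$ (resp.\ $i_1^*\F\in\mathcal{E}_1^2$), which makes the inclusion $\pi^*\DP(\tau(\GG))\subset\D_{X_2}$ a one-line consequence of $T_{E_2}\subset\mathcal{E}_2$ and $T_{\e12}\subset\mathcal{E}_1^2$, without your inductive peeling of the $\ma E_2$ twist. In your write-up you should add a sentence on admissibility: it follows since $i_{2*}$ (resp.\ $i_{1*}$) is fully faithful on each $T_{E_2}\ot\OO(kE_2)$ by the very same triangle you use, and has both adjoints because $X_2$ is smooth and projective.
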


Before diving into the proof of Proposition \ref{key}, we explain how it implies
our main result. We will prove that $\D_{X_2}$ is a categorical crepant
resolution of singularities of $\tau(\GG)$. 

\begin{proof}[Proof of theorem \ref{mainII}]

First note that $\D_{X_2}$ is an admissible subcategory of
$\DB(X_2)$ and that $\pi^* \DP(\tau(\GG)) \subset \D_{X_2}$. Thus, we only have
to prove that for all $\F \in \DP(\tau(\GG))$, we have:

\begin{equation*}
\pi_{\D}^*(\F) \simeq \pi_{\D}^{!}(\F),
\end{equation*}
where $\pi_{\D}^*$ and $\pi_{\D}^!$ are the left and right adjoints to the
functor 
\begin{equation*}
\pi_{\D} : \D_{X_2} \rightarrow \DB(\tau(\GG)).
\end{equation*}
Let $\delta : \D_{X_2} \hookrightarrow \DB (X_2)$ be the fully faithful
admissible
embedding. We must prove that $\d^{*} \pi^{*}(\F) = \d^{!} \pi^{!}(\F)$, for all
$\F \in \DP
(\tg)$. Recall that
\begin{equation*}
\pi^!(\F) = \pi^*(\F) \ot \pi^*(\w_{\tau(\GG)}^{-1})\ot \w_{X_2} = \pi^*(\F) \ot
\OO_{X_2}(\ma E_2 + (3\ma +1)\e12).
\end{equation*}
Now, since the functor $\d$ is fully faithful, the equality $\d^{*} \pi^{*}(\F)
= \d^{!} \pi^{!}(\F)$ is equivalent to $\d(\d^*
\pi^*(\F)) =
\d (\d^! \pi^!(\F))$. As $\pi^* \DP (\tg) \subset
\D_{X_2}$,
we have $\d(\d^* \pi^*(\F)) = \pi^*(\F)$. We are going
to
show that $\d (\d^! \pi^!(\F)) = \pi^*(\F)$. 

\bigskip

For $1 \leq k \leq \ma$ and for $1 \leq l \leq 3\ma+1$, we have exact
sequences:

\begin{equation*}
\begin{split}
& 0 \rightarrow \OO_{X_2}((k-1)E_2) \rightarrow \OO_{X_2}(kE_2) \rightarrow
{i_2}_* \OO_{E_2}(kE_2) \rightarrow 0,\\
& 0 \rightarrow \OO_{X_2}((l-1)\e12 + \ma E_2) \rightarrow \OO_{X_2}(l\e12 + \ma
E_2) \rightarrow {i_1}_*\OO_{\e12}(l\e12 + \ma E_2) \rightarrow 0.
\end{split}
\end{equation*}
Tensoring the above exact sequences with $\pi^* \F$, we get
exact triangles:

\begin{equation*}
\begin{split}
& \OO_{X_2}((k-1)E_2) \ot \pi^*\F \rightarrow \OO_{X_2}(kE_2) \ot \pi^*\F
\rightarrow {i_2}_* \OO_{E_2}(kE_2) \ot \pi^*\F,\\
& \OO_{X_2}((l-1)\e12 + \ma E_2) \ot \pi^*\F \rightarrow \OO_{X_2}(l\e12 + \ma
E_2)\ot \pi^*\F \\
& \rightarrow {i_1}_*\OO_{\e12}(l\e12 + \ma E_2) \ot \pi^*\F.
\end{split}
\end{equation*}
We deduce a long sequence of triangles:

\begin{equation*}
\xymatrix@C=10pt{ 
\pi^*(\F)  \ar[rr]  & & F_{1}^{(2)}
\ar[ld] \ar[r] & \ldots F_{\ma}^{(2)}  \ar[rr] & & F_1^{(1)}
\ar[ld] \ar[r] & \ldots \ar[r] &  F_{3\ma}^{(1)} \ar[rr] & & F_{3\ma +1}^{(1)}
\ar[ld]  \\
  & \F_{1}^{(2)} \ar[lu] & \ldots  & &  \F_{1}^{(1)} \ar[lu] & \ldots &  &  &
\F_{3\ma + 1}^{(1)} \ar[lu] & \\
} 
\end{equation*}
where $F_k^{(2)} = \pi^*\F \ot \OO_{X_2}(kE_2)$, $\F_k^{(2)} = {i_2}_* \left(
\OO_{E_2}(kE_2) \ot i_2^* \pi^* \F \right)$, $F_l^{(1)} = \pi^*\F \ot
\OO_{X_2}(l\e12 + \ma E_2)$ and $\F_l^{(1)} = {i_1}_* \left( \OO_{\e12}(l\e12 +
\ma
E_2) \ot i_1^*\pi^*F \right)$. But we have commutative diagrams:

\begin{equation*}
\xymatrix{ 
E_2 \ar[rr]^{i_2} \ar[dd]^{\tilde{\pi_2}} & & X_2 \ar[dd]^{\pi_2}
\ar@/^6pc/[dddd]^{\pi} \\
& & \\
\pi_1^*\sigma_+(\GG) \ar[rr]^{j_2} & & X_1 \ar[dd]^{\pi_1} \\
& & \\
& & \tau(\GG)}
\end{equation*}
and

\begin{equation*}
\xymatrix{ 
\e12 \ar[rr]^{i_1} \ar[dd]^{\tilde{\pi}} & & X_2 \ar[dd]^{\pi}  \\
& & \\
\GG \ar[rr]^{j_1} & & \tau(\GG) \\}
\end{equation*}
so that 
\begin{equation*}
i_2^* \pi^* \F = \tilde{\pi_2}^* j_2^* \pi_1^*\F \subset \tilde{\pi_2}^*
\DP(\pi_1^*\sigma_+(\GG))
\end{equation*}
and 
\begin{equation*}
i_1^* \pi^*\F = \tilde{\pi}^* j_1^* \F \subset \tilde{\pi}^* \DP(\GG).
\end{equation*} 
Thus, $\pi^*\F$ is the $\D_{X_2}$-component of $F_{3\ma + 1}^{(1)} =
\pi^{!}(\F)$ in
the semi-orthogonal decomposition of proposition \ref{key}. As a consequence,
we have $\pi^*\F =
\delta \delta^{!}(\pi^{!}(\F))$, which is what we wanted to prove.

\end{proof}

\subsection{The key proposition}

In this section, we prove Proposition \ref{key}. We first recall the statement
of Proposition $4.1$ of \cite{kuz}):
\begin{prop}[Kuznetsov's Lefschetz decomposition] \label{lefschetzII}
Let $E$ be a Cartier divisor on a variety $X$. Assume that there is a
semi-orthogonal decomposition:
\begin{equation*}
\DB(E) = \langle \A_m \ot \OO_{E}(mE),\ldots, \A_1 \ot \OO_{E}(E),
\A_0
\rangle,
\end{equation*}
with $\A_m \subset \ldots \subset \A_0$ admissible subcategories of $\DB(E)$.
Then
there is a semi-orthogonal decomposition:
\begin{equation*}
\DB(X) = \langle i_*(\A_m \ot \OO_E(mE)),\ldots, i_*(\A_1 \ot \OO_{E}(E)), \D
\rangle,
\end{equation*}
where $i : E \hookrightarrow X$ is the natural inclusion and $\D = \{ \F \in
\DB(X),\, i^*\F \in \A_0 \}$.
\end{prop}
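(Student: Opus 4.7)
The plan is to verify the four ingredients of a semi-orthogonal decomposition of $\DB(X)$: (i) each $i_{*} : \A_k \otimes \OO_E(kE) \hookrightarrow \DB(X)$ is fully faithful for $1 \leq k \leq m$, so its image is an admissible subcategory; (ii) these images are semi-orthogonal in the order $k = m, \ldots, 1$; (iii) $\D$ is left-orthogonal to every such image; and (iv) these pieces together with $\D$ generate $\DB(X)$. The computational workhorses are the adjunctions $i^{*} \dashv i_{*} \dashv i^{!}$, the formula $i^{!} F = i^{*} F \otimes \OO_E(E)[-1]$ which holds because $E$ is a Cartier divisor, and the Koszul-type triangle $N \otimes \OO_E(-E)[1] \to i^{*} i_{*} N \to N$, with its dual $N \to i^{!} i_{*} N \to N \otimes \OO_E(E)[-1]$.

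For (i) and (ii) together, I would compute $\HH_X(i_{*} M, i_{*} N) = \HH_E(M, i^{!} i_{*} N)$ for $M$ in the $k$-th piece and $N$ in the $l$-th piece, and unfold the result via the dual Koszul triangle into a long exact sequence. The $\HH_E(M, N)$ contribution either recovers the expected Hom space (when $k = l$, yielding full faithfulness) or vanishes by the SOD on $E$ (when $k > l$). The $\HH_E(M, N \otimes \OO_E(E))[-1]$ contribution shifts $N$ one position further to the right in the SOD of $\DB(E)$: for $k > l + 1$ this vanishes immediately by the SOD, while for $k = l + 1$ one must combine the chain condition $\A_k \subset \A_{k-1}$ with the SOD to extract the vanishing.

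For (iii), given $F \in \D$ (so $i^{*} F \in \A_0$) and $A_k \in \A_k$ with $k \geq 1$, the adjunction together with the formula for $i^{!}$ yields
$$\HH_X(i_{*}(A_k \otimes \OO_E(kE)), F) = \HH_E(A_k \otimes \OO_E((k-1)E), i^{*} F)[-1].$$
Since $A_k \otimes \OO_E((k-1)E) \in \A_{k-1} \otimes \OO_E((k-1)E)$ by the nesting $\A_k \subset \A_{k-1}$, and $i^{*} F \in \A_0$, the SOD on $\DB(E)$ kills this Hom space. For (iv), starting with any $F \in \DB(X)$, I would proceed inductively, peeling off the $i_{*}(\A_k \otimes \OO_E(kE))$ components in the order $k = m, \ldots, 1$, using at each stage the SOD decomposition of $i^{*}$ of the current remainder inside $\DB(E)$; the final residue has pullback lying in $\A_0$, hence belongs to $\D$ by definition.

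The main obstacle is the delicate boundary case $k = l + 1$ in step (ii): a naive invocation of the SOD on $E$ does not close the argument, and one must carefully bring the nesting hypothesis $\A_m \subset \ldots \subset \A_0$ into play, together with the vanishing of certain negative Ext groups that hold because the admissible subcategories $\A_k$ are ``abelian enough''. A secondary subtlety is that the proposition is stated for a possibly singular variety $X$, so one must verify throughout that the identity $i^{!} = i^{*}(\cdot) \otimes \OO_E(E)[-1]$ really needs only $E$ to be a Cartier divisor and remains valid in this generality, which is crucial because the proposition is then applied with $X = X_2$ over the singular $\tau(\GG)$.
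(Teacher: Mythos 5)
First, note that the paper does not prove this statement: it is quoted verbatim as Proposition~$4.1$ of \cite{kuz}, so your attempt can only be compared with the standard proof. Your architecture (full faithfulness, semiorthogonality, orthogonality with $\D$, generation) and your toolkit (the adjunctions, $i^!=i^*(-)\ot\OO_E(E)[-1]$ for a Cartier divisor on a possibly singular $X$, the two Koszul triangles) are the right ones. The gap is that you verify the orthogonality conditions in the wrong direction, and the vanishings you actually need in your direction are false. With the convention in force in \cite{kuz} and in this paper (see the proof of Lemma 3.2.3, where semiorthogonality of $\langle T\ot\OO(\ma E),\ldots,T\ot\OO(E),\E_2\rangle$ is checked by showing $\HH(\text{twist }k,\text{twist }l)=0$ for $k<l$), a decomposition $\langle C_1,\ldots,C_n\rangle$ requires $\HH(C_j,C_i)=0$ for $j>i$, i.e.\ no Homs from a component to one placed to its \emph{left}. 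So what must be shown is $\HH(i_*(\A_l\ot\OO_E(lE)),i_*(\A_k\ot\OO_E(kE)))=0$ for $l<k$, and $\HH(\F,i_*(\A_k\ot\OO_E(kE)))=0$ for $\F\in\D$ --- not the Homs out of the $k$-th piece with $k>l$, and not $\HH(i_*(\cdot),\F)$. The Homs you compute are precisely the ones that survive in a Lefschetz decomposition (already in Orlov's blow-up formula $\HH(i_*(p^*\DB(Z)\ot\OO(2E)),i_*(p^*\DB(Z)\ot\OO(E)))$ is nonzero), which is why you hit a ``delicate boundary case'': at $k=l+1$ your leftover term is $\HH_E(\A_{l+1}\ot\OO_E(lE),\A_l\ot\OO_E(lE))$, which contains $\HH_E(\A_{l+1},\A_{l+1})\ni\mathrm{id}$ and cannot vanish; no appeal to negative Exts or to the $\A_k$ being ``abelian enough'' can repair this. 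The same reversal breaks your step (iii) at $k=1$, where your formula would require $\HH_E(\A_1,\A_0)=0$, again false since $\A_1\subset\A_0$.

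Once the directions are corrected everything closes with no boundary case. For $l<k$, adjunction and the triangle $N\ot\OO_E(-E)[1]\to i^*i_*N\to N$ reduce $\HH(i_*(A_l\ot\OO(lE)),i_*(A_k\ot\OO(kE)))$ to the two groups $\HH_E(\A_l\ot\OO(lE),\A_k\ot\OO(kE))$ and $\HH_E(\A_l\ot\OO((l-1)E),\A_k\ot\OO(kE))$: the first vanishes because $l<k$, the second because the nesting gives $\A_l\ot\OO((l-1)E)\subset\A_{l-1}\ot\OO((l-1)E)$, a component strictly to the right of the target since $l-1<k$. This is exactly what the hypothesis $\A_m\subset\cdots\subset\A_0$ is for: it lets you \emph{lower} a twist and land inside a component of the decomposition of $\DB(E)$, whereas your dual triangle \emph{raises} the twist of the target to $\A_l\ot\OO((l+1)E)$, which is contained in no component (and for $l=m$ has no candidate component at all). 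The case $l=k$ of the same computation gives full faithfulness, and step (iii) becomes one line: $\HH(\F,i_*(A_k\ot\OO(kE)))=\HH_E(i^*\F,A_k\ot\OO(kE))=0$ because $i^*\F\in\A_0$, the rightmost component. Your generation step (iv) is fine as sketched.
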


This result will be very
useful to deduce semi-orthogonal decompositions on
$X_2$, starting from semi-orthogonal decompositions on $E_2$ and $E_1^{(2)}$. To
prove \ref{key}, we need the following lemma:

\begin{lem} \label{decompo}
We have the following semi-orthogonal decomposition:
\begin{equation*}
\DB(E_2) = \langle \left( T_{E_2} \ot \OO_{E_2}(\ma E_2) \right),\ldots, \left(
T_{E_2} \ot \OO_{E_2}(E_2) \right), \mathcal{E}_2 \rangle,
\end{equation*}
where $T_{E_2} =
\tilde{\pi}_2^*\DB(\pi_1^*\sigma_+(\GG))$ and $\mathcal{E}_2$ is the left
orthogonal to the subcategory generated by the $T_{E_2} \ot \OO_{E_2}(k E_2)$,
for $1 \leq k \leq \ma$. Moreover, we have the inclusion:
\begin{equation*}
\tilde{\pi}_2^*\DB(\pi_1^*\sigma_+(\GG)) \subset \mathcal{E}_2.
\end{equation*}

We also have the semi-orthogonal decomposition:
\begin{equation*}
\begin{split}
\DB(E_1^{(2)}) = &  \langle T_{E_1^{(2)}} \ot \OO_{E_1^{(2)}}((3\ma +1)E_1^{(2)}
+ \ma E_2),\ldots, T_{E_1^{(2)}} \ot \\
& \OO_{E_1^{(2)}}(E_1^{(2)} + \ma E_2), {i_{1,2}}_* \left( T_{E_{1,2}} \ot
\OO_{E_{1,2}}(\ma E_{1,2}) \right),\ldots,\\
& {i_{1,2}}_* \left( T_{E_{1,2}} \ot \OO_{E_{1,2}}(E_{1,2}) \right),
\mathcal{E}_1^2 \rangle,
\end{split}
\end{equation*}
with $T_{E_1^{(2)}} = \tilde{\pi}^*\DB(\GG)$,
$T_{E_{1,2}} = \tilde{\pi}_{1,2}^* \DB(\pi_1^*\sigma_+(\GG) \cap E_1)$ and
$\mathcal{E}_1^2$ is the left orthogonal to the subcategory generated by the
$T_{E_1^{(2)}} \ot \OO_{E_1^{(2)}}(kE_1^{(2)} + \ma E_2)$ and the ${i_{1,2}}_*
\left( T_{E_{1,2}} \ot \OO_{E_{1,2}}(l E_{1,2}) \right)$, for $1 \leq k \leq
3\ma +1$ and $1 \leq l \leq \ma$. Moreover we have the inclusion:

\begin{equation*}
T_{E_{1}^{(2)}} \subset \mathcal{E}_1^2.
\end{equation*}

\end{lem}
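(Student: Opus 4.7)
The plan is to derive both decompositions from Kuznetsov's relative semi-orthogonal decomposition for smooth quadric fibrations, combined with Proposition \ref{lefschetzII} applied to the Cartier divisor inclusion $E_{1,2} \hookrightarrow E_1^{(2)}$. For $\DB(E_2)$, the proof of Theorem \ref{desingtau} shows that $\tilde{\pi}_2 : E_2 \to \pi_1^*\sigma_+(\GG)$ is a smooth quadric fibration of relative dimension $\ma + 1$, with fibers coming from the relative tangent cone of $X_1$ along $\pi_1^*\sigma_+(\GG)$. Since $E_2$ is the exceptional divisor of $\pi_2$, the line bundle $\OO_{E_2}(E_2)$ restricts on each fiber $Q$ to $\OO_Q(-1)$, and Kuznetsov's relative decomposition for smooth quadric fibrations then gives
\begin{equation*}
\DB(E_2) = \langle T_{E_2} \ot \OO_{E_2}(\ma E_2),\ldots, T_{E_2} \ot \OO_{E_2}(E_2), T_{E_2}, \text{Clifford component} \rangle,
\end{equation*}
where the last term is the derived category of even Clifford modules on $\pi_1^*\sigma_+(\GG)$. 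Defining $\mathcal{E}_2$ as the admissible subcategory generated by the last two terms produces the required decomposition, and the inclusion $\tilde{\pi}_2^*\DB(\pi_1^*\sigma_+(\GG)) \subset \mathcal{E}_2$ is tautological.

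For $\DB(E_1^{(2)})$, I would proceed in two stages. First, the same quadric-fibration argument applies to $\tilde{\pi}_{1,2} : E_{1,2} \to F$ where $F = \pi_1^*\sigma_+(\GG) \cap E_1$: the fibers are smooth quadrics of dimension $\ma$, obtained as hyperplane sections of the fibers of $\tilde{\pi}_2$ (since $E_{1,2}$ is the proper intersection of the Cartier divisors $E_1^{(2)}$ and $E_2$ inside $X_2$), and Kuznetsov's decomposition yields
\begin{equation*}
\DB(E_{1,2}) = \langle T_{E_{1,2}} \ot \OO_{E_{1,2}}(\ma E_{1,2}),\ldots, T_{E_{1,2}} \ot \OO_{E_{1,2}}(E_{1,2}), \A_0 \rangle
\end{equation*}
with $T_{E_{1,2}} \subset \A_0$. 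Applying Proposition \ref{lefschetzII} to the Cartier divisor $E_{1,2} \subset E_1^{(2)}$ then lifts the first $\ma$ components to the ${i_{1,2}}_*$-pieces of the claim. Second, I decompose what remains by using the composite fibration $\tilde{\pi} : E_1^{(2)} \to \GG$, which is the blow-up $\pi_{1,2} : E_1^{(2)} \to E_1$ along $F$ followed by the non-reduced $\mathbb{P}^{3\ma + 2}$-bundle $\tilde{\pi}_1 : E_1 \to \GG$. A Lefschetz decomposition of the (non-reduced) projective bundle $\tilde{\pi}_1$ with $3\ma + 3$ twists of $\tilde{\pi}_1^*\DB(\GG)$, pulled back through $\pi_{1,2}$, produces the $3\ma + 1$ twisted components $T_{E_1^{(2)}} \ot \OO_{E_1^{(2)}}(k E_1^{(2)} + \ma E_2)$ after reabsorbing two twists into the remainder $\mathcal{E}_1^2$. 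The shift by $\ma E_2$ is forced by the canonical bundle formula of Lemma \ref{formula-canonicalII}, which ensures that $\mathcal{E}_1^2$ will be compatible with the eventual crepancy condition for $\tg$.

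The principal difficulty is handling the non-reduced structure of $E_1$, since the standard Orlov blow-up formula and the Lefschetz decomposition for projective bundles are formulated for smooth reduced schemes. I would address this by embedding $E_1$ as a doubled hyperplane section inside the smooth divisor $H_1 \subset \widetilde{\mathbb{P}(V_{\AA})}$ (a $\mathbb{P}^{3\ma + 3}$-bundle over $\GG$, cf.\ Lemma \ref{formula-canonicalII}), restricting a Lefschetz decomposition of $\DB(H_1)$ to $E_1$, and then transporting it to $E_1^{(2)}$ through the blow-up $\pi_{1,2}$. The contribution of the nilpotent ideal $\OO_{|E_1|_{\mathrm{red}}}(-1)$ should be controlled by the vanishings of Proposition \ref{vanishingII}, and these same vanishings will also be used to verify the semi-orthogonality between the $\A_l$ pieces of the second stage and the $\B_k$ pieces of the first stage.
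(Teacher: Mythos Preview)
Your approach diverges substantially from the paper's, and the divergence creates a real gap in the second decomposition.

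For $\DB(E_2)$ your argument is fine: $\tilde\pi_2$ is a smooth quadric fibration over the smooth variety $\pi_1^*\sigma_+(\GG)$, so Kuznetsov's Clifford decomposition applies and your $\mathcal{E}_2$ is even identified explicitly. The paper, by contrast, never invokes the quadric-fibration theorem at all. It uses only Proposition~\ref{vanishingII}: the vanishing $\RR\tilde\pi_{2*}\OO_{E_2}(kE_2)=0$ for $1\le k\le \ma$ simultaneously gives full faithfulness of each $T_{E_2}\otimes\OO_{E_2}(kE_2)\hookrightarrow\DB(E_2)$, the semi-orthogonality between them, and the inclusion $T_{E_2}\subset\mathcal E_2$. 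Nothing about the fibers beyond these cohomological vanishings is used.

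For $\DB(E_1^{(2)})$ this difference becomes decisive. You try to produce the $T_{E_1^{(2)}}$-blocks from a Lefschetz decomposition of the non-reduced bundle $\tilde\pi_1:E_1\to\GG$, and you correctly flag that Orlov's formula and Kuznetsov's quadric theorem do not apply to non-reduced schemes. Your proposed workaround (restrict a decomposition of the ambient $H_1$, control the nilpotent ideal, transport through $\pi_{1,2}$) is not a proof: no mechanism is given for why restricting a semi-orthogonal decomposition to a thickened hyperplane section yields one, and in fact $\DB(E_1)$ has infinite homological dimension, so it cannot carry a decomposition of the Beilinson type you want. The same issue infects your treatment of $E_{1,2}$: its base $\pi_1^*\sigma_+(\GG)\cap E_1$ is a Cartier divisor on the smooth scheme $\pi_1^*\sigma_+(\GG)$ cut out by the non-reduced $E_1$, so it is itself non-reduced, and Kuznetsov's quadric-fibration theorem again does not apply directly.

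The paper sidesteps all of this. It never seeks a full structural decomposition of $\DB(E_1)$ or $\DB(E_{1,2})$; it only needs that the specific subcategories in the statement are admissible and mutually semi-orthogonal, and this follows formally from the three vanishing statements in Proposition~\ref{vanishingII}. The cross-orthogonality between the $T_{E_1^{(2)}}$-blocks and the ${i_{1,2}}_*$-blocks is obtained by Serre duality on $E_1^{(2)}$ (this is where the twist by $\ma E_2$ enters, via the adjunction $\omega_{E_1^{(2)}}\in \OO_{E_1^{(2)}}(\ma E_2)\otimes T_{E_1^{(2)}}$), followed by adjunction along the commutative square for $\pi_{1,2}$ and the vanishing $\RR\tilde\pi_{1,2*}\OO_{E_{1,2}}(kE_{1,2})=0$. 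No knowledge of what the fibers look like, reduced or not, is required.

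In short: replace your appeal to quadric and projective-bundle decompositions by direct use of Proposition~\ref{vanishingII}. That proposition already encodes exactly the cohomological input needed, and it was proved without any reducedness hypothesis.
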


\begin{proof}
We start with the proof of the first point. By Proposition \ref{vanishingII}, we
have:
\begin{equation*}
 \tilde{\pi}_{2*} \Hh(\OO_{E_2}(kE_2), \OO_{E_2}(kE_2)) =
\OO_{\pi_1^*\sigma_+(\GG)},
\end{equation*}
for all $k \in \mathbb{Z}$. This implies that the subcategories
$\tilde{\pi}_2^*\DB(\pi_1^*\sigma_+(\GG)) \ot \OO_{E}(kE)$ are full admissible
subcategories of $\DB(E_2)$, for all $k \in \mathbb{Z}$. Let $1 \leq k < l \leq
\ma + 1$ be integers. We have:

\begin{equation*}
\begin{split}
  & \HH \left( \OO_{E}(kE) \ot \tilde{\pi}_2^* \DB(\pi_1^*\sigma_+(\GG)),
\OO_{E}(lE) \ot \tilde{\pi}_2^*\DB(\pi_1^*\sigma_+(\GG)) \right)\\
= & \HH \left( \tilde{\pi_2}^* \DB(\pi_1^*\sigma_+(\GG)), \OO_{E}((l-k)E)
\ot \tilde{\pi_2}^*\DB(\pi_1^*\sigma_+(\GG)) \right) \\
= & \HH \left( \DB(\pi_1^*\sigma_+(\GG)), {\tilde{\pi_2}}_*\left(
\OO_{E}((l-k)E) \right) \ot \DB(\pi_1^*\sigma_+(\GG)) \right)  \\
= & 0,
\end{split}
\end{equation*}
where the last equality holds by Proposition \ref{vanishingII} because $1 \leq
l-k \leq
\ma$. As a consequence, we have a semi-orthogonal decomposition:
\begin{equation*}
\DB(E_2) = \langle \left( T_{E_2} \ot \OO_{E_2}(\ma E_2) \right),\ldots, \left(
T_{E_2} \ot \OO_{E_2}(E_2) \right), \mathcal{E}_2 \rangle,
\end{equation*}
with $T_{E_2} = \tilde{\pi}_2^*\DB(\pi_1^*\sigma_+(\GG))$ and $\mathcal{E}_2$
is the left orthogonal to the admissible subcategory generated by the
$\OO_{E}(kE) \ot \DB(\pi_1^*\sigma_+(\GG))$ for $1 \leq k \leq \ma $. It only
remains to show that:

\begin{equation*} 
\tilde{\pi}_2^* \DB(\pi_1^*\sigma_+(\GG)) \subset \mathcal{E}_1^2.
\end{equation*}
Equivalently, we need to prove that
the subcategory $\tilde{\pi_2}^*\DB(\pi_1^*\sigma_+(\GG))$
is left orthogonal to the admissible subcategory generated by the $\OO_{E}(kE)
\ot \tilde{\pi}_2^*\DB(\pi_1^*\sigma_+(\GG))$ for $1 \leq k \leq \ma $. As
before, this is a consequence of Proposition \ref{vanishingII}.

\bigskip

For the second point, we first note that the same proof as for the first point
yields the following semi-orthogonal decomposition:
\begin{equation*}
\DB(E_{1,2}) = \langle T_{1,2} \ot \OO_{E_{1,2}}(\ma E_{1,2}), \ldots T_{1,2}
\ot \OO_{E_{1,2}}(E_{1,2}), \mathcal{E}_{1,2} \rangle,
\end{equation*}
with $T_{1,2} = \tilde{\pi}_{1,2}^*\DB(\pi_1^*\sigma_+(\GG) \cap E_1)$ and
$T_{1,2} \subset \mathcal{E}_{1,2}$. So by Proposition \ref{lefschetzII}, the
categories
\begin{equation*}
{i_{1,2}}_* \left( \OO_{E_{1,2}}(k E_{1,2}) \ot
\tilde{\pi}_{1,2}^*\DB(\pi_1^*\sigma_+(\GG) \cap E_1) \right),
\end{equation*}
are full admissible subcategories of $\DB(E_1^{(2)})$ which are left orthogonal
to each other, for $1 \leq k \leq \ma$. 

Using again Proposition \ref{vanishingII}, we prove that
the subcategories:
\begin{equation*}
\tilde{\pi}^*\DB(\GG) \ot \OO_{E_1^{(2)}}(lE_1^{(2)} + \ma E_2)
\end{equation*}
are full admissible subcategories of $\DB(E_1^{(2)})$ which are left orthogonal
to each other, for $1 \leq l \leq 3 \ma +1$. The adjunction formula shows that
$\w_{E_1^{(2)}} = \OO_{E_1^{(2)}}( \ma E_2) \ot \mathcal{w}$ for some
$\mathcal{w} \in T_{E_1^{(2)}}$. Then, by Serre duality, we have:

\begin{equation*}
\begin{split}
  & \HH \left( {i_{1,2}}_* \left( T_{E_{1,2}} \ot \OO_{E_{1,2}}(kE_{1,2})
\right), T_{E_1^{(2)}} \ot \OO_{E_1^{(2)}}(lE_1^{(2)} + \ma E_2)
 \right) \\
 = & \HH \left( T_{E_1^{(2)}} \ot \OO_{E_1^{(2)}}(lE_1^{(2)}), {i_{1,2}}_*
\left( T_{E_{1,2}} \ot \OO_{E_{1,2}}(kE_{1,2}) \right) \right)^*,
\end{split}
\end{equation*}
for $1 \leq k \leq \ma$ and $1\leq l \leq 3\ma +1$. Recall that
$T_{E_1^{(2)}} = \pi_{1,2}^* T_{E_1}$, with $T_{E_1} = \tilde{\pi}_1^*\DB(\GG)$
and that $\OO_{E_1^{(2)}}(E_1^{(2)}) = \pi_{1,2}^* \OO_{E_1}(E_1)$. Thus, the
adjunction formula for $\pi_{1,2}$ gives:

\begin{equation*}
\begin{split}
  & \HH \left( T_{E_1^{(2)}} \ot \OO_{E_1^{(2)}}(lE_1^{(2)}), {i_{1,2}}_* \left(
T_{E_{1,2}} \ot \OO_{E_{1,2}}(kE_{1,2}) \right) \right) \\
= & \HH \left( T_{E_1} \ot \OO_{E_1}(lE_1), \pi_{1,2*} \left( {i_{1,2}}_*
\left( T_{E_{1,2}} \ot \OO_{E_{1,2}}(kE_{1,2}) \right) \right) \right).
\end{split}
\end{equation*}

But we have a commutative diagram:

\begin{equation*} 
\xymatrix{ 
E_{1,2} \ar@{^{(}->}[rrr]^{i_{1,2}} \ar[dd]^{\tilde{\pi}_{1,2}} & & & \e12
\ar[dd]^{\pi_{1,2}}  \\
& & \\
E_1 \cap \pi_1^*\sigma_+(\GG) \ar@{^{(}->}[rrr]^{\kappa_1} & & & E_1 \\}
\end{equation*}

so that:
\begin{equation*}
{\pi_{1,2}}_* \left( {i_{1,2}}_* \left( T_{E_{1,2}} \ot \OO_{E_{1,2}}(kE_{1,2})
\right) \right) = {\kappa_1}_* \left( \tilde{\pi}_{1,2*} \left( T_{E_{1,2}}
\ot \OO_{E_{1,2}}(kE_{1,2}) \right) \right)
\end{equation*}
and by Proposition \ref{vanishingII}, we have $\tilde{\pi}_{1,2*} \left(
\OO_{E_{1,2}}(kE_{1,2}) \right) = 0$ for all $1 \leq k \leq \ma$. 

\bigskip

As a consequence, we have proved that we have a semi-orthogonal decomposition:
\begin{equation*}
\begin{split}
\DB(E_1^{(2)}) = &  \langle T_{E_1^{(2)}} \ot \OO_{E_1^{(2)}}((3\ma +1)E_1^{(2)}
+ \ma E_2),\ldots, T_{E_1^{(2)}} \ot \OO_{E_1^{(2)}}(E_1^{(2)} + \ma E_2),\\
 & {i_{1,2}}_* \left( T_{E_{1,2}} \ot \OO_{E_{1,2}}(\ma E_{1,2}) \right),\ldots,
{i_{1,2}}_* \left( T_{E_{1,2}} \ot \OO_{E_{1,2}}(E_{1,2}) \right),
\mathcal{E}_1^2 \rangle,
\end{split}
\end{equation*}
with $T_{E_1^{(2)}} = \tilde{\pi}^*\DB(\GG)$, $T_{E_{1,2}} =
\tilde{\pi}_{1,2}^*
\DB(\pi_1^*\sigma_+(\GG) \cap E_1)$ and $\mathcal{E}_1^2$ is the left orthogonal
to the admissible subcategory generated by the $T_{E_1^{(2)}} \ot
\OO_{E_1^{(2)}}(lE_1^{(2)} + \ma E_2)$ and ${i_{1,2}}_* \left( T_{E_{1,2}} \ot
\OO_{E_{1,2}}(k E_{1,2}) \right)$ for $1 \leq l \leq 3 \ma + 1$ and $1 \leq k
\leq \ma$.

\bigskip

It remains to prove that $T_{\e12} \subset \mathcal{E}_1^{(2)}$. This is done as
before using Proposition \ref{vanishingII}. We leave the proof to the reader.

\end{proof}

Using this lemma, we can finish the proof of proposition \ref{key}. 

\begin{proof}[Proof of Proposition \ref{key}]

By Proposition \ref{lefschetzII}, we know that the categories  
\begin{equation*}
\A_l = {i_1}_* \left( \tilde{\pi}^*\DB(\GG) \ot \OO_{\e12}(l \e12 + \ma E_2)
\right)
\end{equation*}
and
\begin{equation*}
\B_k = {i_2}_* \left( \tilde{\pi}_2^*\DB(\pi_1^*\sigma_+(\GG)) \ot
\OO_{E_2}(kE_2) \right)
\end{equation*}
are full admissible subcategories of $\DB(X_2)$ for $1 \leq l \leq 3 \ma +1$
and $1 \leq k \leq \ma$. Moreover, again by Proposition \ref{lefschetzII}, the
$\A_l$ are left orthogonal to each other for $1 \leq l \leq 3 \ma +1$, while the
$\B_k$ are left orthogonal to each other for $1 \leq k \leq \ma$. We start by
proving that the $\B_k$ are left orthogonal to the $\A_l$. We have:

\begin{equation*}
\begin{split}
  & \HH(\B_k, \A_l)\\
= & \HH ( {i_2}_* \left( \tilde{\pi}_2^*\DB(\pi_1^*\sigma_+(\GG)) \ot
\OO_{E_2}(kE_2) \right),\\
  & \,\,\,\,\,\,\,\,\,\, {i_1}_* \left( \tilde{\pi}^*\DB(\GG) \ot \OO_{\e12}(l
\e12 + \ma E_2)
\right))\\
= & \HH ( i_1^* \left[  {i_2}_* \left( \tilde{\pi_2}^*\DB(\pi_1^*\sigma_+(\GG))
\ot \OO_{E_2}(kE_2) \right) \right], \\
  & \,\,\,\,\,\,\,\,\,\, \tilde{\pi}^*\DB(\GG) \ot \OO_{\e12}(l \e12 + \ma
E_2)).
\end{split}
\end{equation*}

But we have a cartesian square:

\begin{equation*}
\xymatrix{ 
E_{1,2} \ar[rr]^{i_{1,2}} \ar[dd]^{i_{2,1}} & &  \e12 \ar[dd]^{i_1}  \\
& & \\
E_2 \ar[rr]^{i_2} & &  X_2 \\},
\end{equation*}
with $\dim E_{1,2} = \dim E_2 + \dim \e12 - \dim X_2$ and $i_1, i_2$ are locally
complete intersection embeddings. Thus, this diagram is Tor-neutral (see
\cite{kuz4}, Corollary $2.27$) and we have:
\begin{equation*}
i_1^* {i_2}_* \F = {i_{1,2}}_* i_{2,1}^* \F,
\end{equation*}
for all $\F \in \DM(E_2)$. So we have:
\begin{equation*}
\begin{split}
  & i_1^* \left[  {i_2}_* \left( \tilde{\pi}_2^*\DB(\pi_1^*\sigma_+(\GG)) \ot
\OO_{E_2}(kE_2) \right) \right] \\
= & {i_{1,2}}_* \left[ i_{2,1}^* \left(
\tilde{\pi}_2^*\DB(\pi_1^*\sigma_+(\GG))
\ot \OO_{E_2}(kE_2) \right) \right].
\end{split}
\end{equation*}
The commutative diagram:

\begin{equation*}
\xymatrix{ 
E_{1,2} \ar@{^{(}->}[rrr]^{i_{2,1}} \ar[dd]^{\tilde{\pi_{2,1}}} & &  & E_2
\ar[dd]^{\tilde{\pi_2}}  \\
& & \\
E_1 \cap \pi_1^*\sigma_+(\GG) \ar@{^{(}->}[rrr]^{j_2} & &  &
\pi_1^*\sigma_+(\GG) \\},
\end{equation*}
shows that:
\begin{equation*}
i_{2,1}^* \left( \tilde{\pi}_2^*\DB(\pi_1^*\sigma_+(\GG)) \right) =
\tilde{\pi_{2,1}}^* \left( {j_2}_* \DB(\pi_1^*\sigma_+(\GG)) \right).
\end{equation*}
As $\pi_1^*\sigma_+(\GG)$ is smooth, we have the inclusion: 
\begin{equation*}
\tilde{\pi}_{1,2}^* \left( {j_2}_* \DB(\pi_1^*\sigma_+(\GG)) \right) \subset
T_{E_{1,2}}.
\end{equation*}
Hence, to prove that $\HH(\B_k, \A_l) = 0$, it is sufficient to prove that:
\begin{equation*}
\HH ( {i_{1,2}}_* \left(T_{E_{1,2}} \ot \OO_{\e12}(kE_2) \right), T_{\e12} \ot
\OO_{\e12}(l \e12 + \ma E_2)) = 0.
\end{equation*}
Since $\OO_{ \e12 }(E_2) = \OO_{\e12 }(E_{1,2})$, this last vanishing
comes
from the semi-orthogonal decomposition of $\DB(E_1)$ found in Lemma
\ref{decompo}. 
As a consequence, we have proved that we have a semi-orthogonal decomposition:

\begin{equation*}
\DB(X_2) = \langle \A_{3 \ma +1},\ldots, \A_1, \B_{\ma},\ldots, \B_1, \D_{X_2}
\rangle,
\end{equation*}
where $\D_{X_2}$ is the left orthogonal to the admissible subcategory generated
by the $\A_l$'s and the $\B_k$'s. The only fact left to complete the proof of
Proposition \ref{key} is the inclusion $\pi^* \DP(\tg) \subset \D_{X_2}$.
Equivalently, we have to prove that $\pi^* \DP(\tg)$ is left orthogonal to the
$\A_l$'s and the $\B_k$'s. By Proposition \ref{lefschetzII}, the left orthogonal
to
the $\B_k$'s is:
\begin{equation*}
\{ \F \in \DB(X_2),\, i_2^* \F \in \mathcal{E}_2 \}
\end{equation*}
 and we know (Proposition \ref{decompo}) that $\tilde{\pi}_2^* \DB(\pi_1^*
\sigma_+(\GG)) \subset \mathcal{E}_2$. Moreover the commutative diagram:

\begin{equation*}
\xymatrix{ 
E_2 \ar[rr]^{i_2} \ar[dd]^{\tilde{\pi}_2} & & X_2 \ar[dd]^{\pi_2}
\ar@/^6pc/[dddd]^{\pi} \\
& & \\
\pi_1^*\sigma_+(\GG) \ar[rr]^{j_2} & & X_1 \ar[dd]^{\pi_1} \\
& & \\
& & \tau(\GG)}
\end{equation*}
shows that $i_2^* \pi^* \DP(\tg) \subset \tilde{\pi}_2^*
\DB(\pi_1^*\sigma_+(\GG))$, which implies that $\pi^* \DP(\tg)$ is left
orthogonal to the $\B_k$'s, for $1 \leq k \leq \ma$. We prove in the same
fashion
that $\pi^* \DP(\tg)$ is left orthogonal to the $\A_l$'s for $1 \leq l \leq 3
\ma +1$. This concludes the proof of Proposition \ref{key}.

\end{proof}

\newpage

\section{Conclusion}
In this paper, we developed methods in order to build categorical crepant
resolutions of singularities for some varieties which do not admit any wonderful
resolution of singularities. In \cite{abuafcategorical}, we noticed that
strongly crepant categorical resolutions have very interesting minimality
properties. Unfortunately, they seem to be much more difficult to construct. As
a corollary of the main result of \cite{abuafcategorical}, we know that all
determinantal varieties admit categorical crepant resolutions of singularities.
Which determinantal varieties have a strongly crepant resolution is yet a widely
open problem:

\begin{quest} Which determinantal varieties admit strongly crepant categorical resolution of singularities?
\end{quest}
From \cite{acgh}, section $2.2$, we know that all square determinantal
varieties have a small resolution of singularities, hence a (geometric) crepant
resolution. Thus, the above question is only interesting for symmetric and
Pfaffian determinantal varieties. In the Appendix B, we will prove that all
Pfaffians are $\mathbb{Q}$-factorial with terminal singularities, so that they
do not admit any geometric crepant resolution of singularities.

\bigskip

Let us mention some obstructions to the construction of strongly crepant categorical resolution of singularities. Let $X$ be a projective variety with Gorenstein rational singularities and let:
\begin{equation*}
\pt : \T \rightarrow \DB(X), 
\end{equation*}
a categorical crepant resolution of $X$. If $S_{\T}$ is a Serre functor for $\T$,
then, for all $\F \in \DP(X)$, we have:
\begin{equation*}
S_{\T}(\pi_{\T}^* \F) = \pi_{\T}^* \F \ot \pi_{\T}^* \w_{X}[\dim X],
\end{equation*}
where $\pt^*$ is the left adjoint to ${\pt}_*$.
In order for $\T$ to be a strongly crepant resolution, we need:
\begin{equation*}
 S_{\T}(T) = T \ot \pi_{\T}^*\w_{X}[\dim X],
\end{equation*}
for all $T \in \T$. If $\T \simeq \DB(Y)$ for some variety $Y$, then the Serre
functor of $\T$ is the tensor product with the dualizing complex of $Y$. As $\T$
is a crepant resolution of $X$, we deduce that the dualizing complex of $Y$ is
$\pi_{\T}^*
\w_{X}[
\dim X]$. Nevertheless, if $\T$ is not \textit{geometric}, then we can not
predict how the Serre functor of $\T$ acts on objects which are not in
$\pi_{\T}^* \DP(X)$. Kuznetsov gives examples of categorical crepant resolutions
which are not strongly crepant (\cite{kuz}). In particular, the Serre functor
does not act the same on all objects of these categories. In order to construct
strongly crepant resolutions of singularities, it seems necessary to understand
in details the objects $T \in \T$ such that $S_{\T}(T) \neq T \ot \pi_{\T}^*
\w_X[\dim X]$. This more or less reduces to understand more precisely
categorical crepant resolutions of singularities. A complete answer to the
following question could prove to be very helpful:

\begin{quest} Let $X$ be a projective variety with Gorenstein rational singularities. let $\pi :\tilde{X}  \rightarrow X$ be a resolution of singularities and:
\begin{equation*}
 \xymatrix{ \T \ar[dd]^{{\pi_{\T}}_*} \ar@{^{(}->}[rr]^{\d} & & \DB(\tilde{X})
\ar[lldd]_{\pi_*}\\
 & & \\
 \DB(X)}
\end{equation*}
be a categorical crepant resolution of $X$. When is $\T$ the derived category of a (non-commutative) moduli space of objects in $\DB(\tilde{X})$ ? 
\end{quest}
Note that the idea of linking a non trivial component of a Lefschetz
decomposition of $\DB(Y)$ (for smooth $Y$) to a moduli space of objects in
$\DB(Y)$ is not new
(see \cite{kuz8}, \cite{kuz9}). It had been fruitfully exploited in \cite{macri-stellari}.

\bigskip

Let us come back to the case of $\mathrm{G(3,6)} \subset \mathbb{P}^{19}$. We prove that the tangent variety of $\mathrm{G(3,6)}$ admit a categorical crepant resolution of singularities. One would like to know if this resolution is \textit{non-commutative}, more precisely:

\begin{quest} Let 
\begin{equation*}
\xymatrix{
\D_{X_2} \ar[ddrr]^{\pt} \ar@{^{(}->}[rr]^{\d} & & \DB(X_2)
\ar[dd]^{\pi_*} \\
& & \\
& & \DB(\tau(\mathrm{G(3,6)})) \\}
\end{equation*}
be the categorical crepant resolution of the tangent variety to $\mathrm{G(3,6)}$ built in the theorem \ref{mainII}. Is there a sheaf of algebras \footnote{A daring mind would not restrict to the sole algebras, but would also consider \textit{DG-algebras} and perhaps
\textit{$A_{\infty}$-algebras...}} $\A_{\tau(\mathrm{G(3,6)})}$ on
$\tau(\mathrm{G(3,6)})$ such that:
\begin{equation*}
\D_{X_2} \simeq
\DB(\tau(\mathrm{G(3,6)}), \A_{\tau(\mathrm{G(3,6)})}) \,\,\, ?
\end{equation*}
\end{quest}
As a consequence of theorem $5.2$ in \cite{kuz}, it is sufficient to find "very
good" semi-orthogonal decompositions of the exceptional divisors of $X_2$. In
our situation, it would be sufficient to find a pair of vector bundles $\{
\mathcal{V}, \mathcal{W} \}$ on $X_2$, which is exceptional with respect
to $\pi$, such that:
\begin{equation*}
 \mathcal{E}_2 = \langle \mathcal{V}|_{E_2} \ot \tilde{\pi}_2^* \DB(\pi_1^*
\sigma_+(\mathrm{G(3,6)})) \rangle
\end{equation*}
and
\begin{equation*}
\mathcal{E}_1^2 = \langle \mathcal{W}|_{E_1^{(2)}} \ot \tilde{\pi}^*
\DB(\mathrm{G(3,6)}) \rangle,
\end{equation*}
where $\mathcal{E}_2$ and $\mathcal{E}_1^2$ are defined in the lemma
\ref{decompo}.
\bigskip

The natural projection $\tilde{\pi_2} : E_2 \rightarrow \pi_1^*
\sigma_+(\mathrm{G(3,6)}))$ is a fibration into smooth quadrics. It seems
plausible to find two vector bundles on $X_2$ which would specialize into the
relative spinor bundles once restricted to $E_2$. This would rule out the case
of $\mathcal{E}_2$. The case of $\E_1^{(2)}$ is much more subtle. Indeed, the
projection $\tilde{\pi}_1 : E_1 \rightarrow \mathrm{G(3,6)}$ is
a fibration into doubled $\mathbb{P}^8$. As a consequence, the divisor $E_1$ is globally non reduced and so is the divisor $E_1^{(2)}$. The existence of a vector bundle $\mathcal{W}$ on $X_2$ such that:
\begin{equation*}
 \mathcal{E}_1^{(2)} = \langle \mathcal{W}|_{E_2} \ot \tilde{\pi}_2^*
\DB(\pi_1^*
\sigma_+(\mathrm{G(3,6)})) \rangle
\end{equation*}
would imply that $\DB(E_1^{(2)})$ has finite homological dimension : it is
impossible! One can however hope that the theorem $5.2$ of \cite{kuz} could be
extended in the following way : it is sufficient to find a sheaf of algebras
$\A_{X_2}$ on $X_2$ with finite homological dimension, such that $\DB(E_1^{(2)},
\A_{X_2} \ot \OO_{E_1^{(2)}})$ still has a "very good" semi-orthogonal decomposition and the natural projection:                                                           
\begin{equation*}
\DB(E_1^{(2)}, \A_{X_2} \ot \OO_{E_1^{(2)}}) \rightarrow \DB(E_1^{(2)}),
\end{equation*}
is a \textit{categorical resolution of singularities} (in some extended sense for non-reduced schemes). Fortunately enough, part of this program has been already carried out in \cite{kuz-lunts}. Indeed, theorem $5.23$ of this paper enables us to construct a sheaf of algebras $\A_{E_1}$ on
$E_1$ such that:
\begin{itemize}
 \item the natural projection $\DB(E_1, \A_{E_1}) \rightarrow
\DB(E_1)$ is a \textit{categorical resolution of singularities},
\item there is a semi-orthogonal decomposition:
\begin{equation*}
\begin{split}
\DB(E_1, \A_{E_1}) = & \langle \langle
\OO_{|E_1|_{red}}^{\alpha}(-8) \ot
\tilde{\pi}_1^* \DB(\mathrm{G(3,6)}), \ldots 
\OO_{|E_1|_{red}}^{\alpha} \ot
\tilde{\pi}_1^* \DB(\mathrm{G(3,6)} \rangle, \\
& \langle \OO_{|E_1|_{red}}^{\beta}(-8) \ot
\tilde{\pi}_1^* \DB(\mathrm{G(3,6)}), \ldots 
\OO_{|E_1|_{red}}^{\beta} \ot
\tilde{\pi}_1^* \DB(\mathrm{G(3,6)} \rangle \rangle, 
\end{split}
\end{equation*}
where $\OO_{[E_1[_{red}}^{\alpha}(1)$ and $\OO_{|E_1|_{red}}^{\beta}(1)$ are
sheaves of $\A_{E_1}$-modules which identify to the relatively very ample
generator of the relative Picard group of the projective bundle: 
\begin{equation*}
|E_1|_{red} \rightarrow \mathrm{G(3,6)},
\end{equation*}
when they are restricted to $|E_1|_{red}$, the reduced scheme underlying $E_1$.
\end{itemize}

\bigskip

Finally, there are many points left to check in order to demonstrate the existence of a non-commutative crepant resolution of $\tau(\mathrm{G(3,6)})$:
\begin{itemize}
 \item find a sheaf of algebras $\A_{E_1^{(2)}}$ on $E_1^{(2)}$ such that
$\DB(E_1^{(2)}, \A_{E_1^{(2)}})$ admits a semi-orthogonal decomposition compatible with the one of
$\DB(E_1, \A_{E_1})$,
\item show that $\A_{E_1^{(2)}}$ is the restriction to 
$E_1^{(2)}$ of a sheaf of algebras $\A_{X_2}$ on $X_2$ (this should be the trickiest part!),
\item show that the natural projection:
\begin{equation*}
r_* : \DB(X_2,\A_{X_2}) \rightarrow \DB(X_2),
\end{equation*}
satisfy $r_* r^* = \mathrm{id}$,
\item prove that the category $\DB(E_2, \A_{X_2} \ot \OO_{E_2})$ still has a "very good" semi-orthogonal decomposition which is compatible with the decomposition of $\DB(E_1^{(2)}, \A_{E_1^{(2)}})$.

\end{itemize}
The first point of this program is the certainly the easiest to complete. Indeed, the divisor
$E_1^{(2)}$ is the bow-up of $E_1$ along a smooth subscheme which meet transversally all fibers of $\tilde{\pi}_1 : E_1
\rightarrow \mathrm{G(3,6)}$.

\newpage

\begin{appendix}
\section{The map $\mu : E \rightarrow \sigma_+(\GG)$ has infinite
Tor-dimension}

\subsection{Basic facts on finite Tor-dimension}
We recall the following definition:

\begin{defi}
Let $f : X \rightarrow Y$ be a morphism of schemes of finite type over an
algebraically closed field $k$. We say that $f$ has finite \emph{Tor-dimension}
if $\OO_{X}$ has a finite projective resolution as a $\OO_Y$-module.
\end{defi}

The following are the best known examples of morphism with finite Tor-dimension:
\begin{itemize}
\item morphisms $f : X \rightarrow Y$, with $Y$ smooth,
\item flat morphisms,
\item locally complete intersection morphisms,
\item any composition of the three above examples.
\end{itemize}

The result below implies that any resolution of singularities has infinite
Tor-dimension:

\begin{prop} \label{easyinf} Let $f : X  \rightarrow Y$ a proper morphism of
varieties over an algebraically closed field $k$. Let $y \in Y_{sing}$ and
assume that $f^{-1}(y)$ is not included in the singular locus of $X$. Then $f$
has
infinite Tor-dimension.
\end{prop}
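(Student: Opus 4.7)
The proof will go by contradiction. Assume $f$ has finite Tor-dimension. Since finite Tor-dimension of $\OO_X$ over $\OO_Y$ is a local property, localizing at any smooth point $x \in f^{-1}(y)$ yields a local ring homomorphism $\phi : A \to B$, where $A = \OO_{Y,y}$ and $B = \OO_{X,x}$, such that $B$ has finite flat dimension as an $A$-module. By hypothesis $B$ is a regular local ring (as $x$ is smooth in $X$), while $A$ is \emph{not} regular (as $y \in Y_{\mathrm{sing}}$). The proposition will follow once we show this situation is impossible.

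The key lemma I would prove is the following: if $\phi : (A,\mathfrak{m}_A) \to (B,\mathfrak{m}_B)$ is a local homomorphism of Noetherian local rings with $B$ regular and $B$ of finite flat dimension over $A$, then $A$ is regular. For this, I would invoke the change-of-rings spectral sequence
\begin{equation*}
E_2^{p,q} = \mathrm{Tor}_p^B\bigl(\mathrm{Tor}_q^A(k_A,B), k_B\bigr) \Longrightarrow \mathrm{Tor}_{p+q}^A(k_A, k_B),
\end{equation*}
where $k_A$ and $k_B$ are the residue fields. By the finite flat dimension hypothesis, the rows vanish for $q \gg 0$; by regularity of $B$, the columns vanish for $p > \dim B$. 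Hence $\mathrm{Tor}_i^A(k_A, k_B) = 0$ for $i \gg 0$.

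To finish, note that $\phi$ is local, so $\phi(\mathfrak{m}_A) \subseteq \mathfrak{m}_B$ annihilates $k_B$; therefore $k_B$ is a $k_A$-vector space, hence isomorphic to $k_A^{\oplus I}$ as an $A$-module for some nonempty index set $I$. Consequently $\mathrm{Tor}_i^A(k_A, k_A)^{\oplus I} = 0$ for $i \gg 0$, so $\mathrm{Tor}_i^A(k_A, k_A) = 0$ for $i \gg 0$. By Serre's characterization of regularity, $A$ must be regular, contradicting our assumption $y \in Y_{\mathrm{sing}}$.

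The main (and only real) subtlety I anticipate is making sure the change-of-rings spectral sequence is applied in the correct generality (one needs $B$ as an intermediate ring between $A$ and $k_B$, but no finite generation of $B$ over $A$ is required, and all modules are Noetherian enough for the spectral sequence to be well-behaved). The properness of $f$ is not strictly needed for this argument; what is used is the existence of a smooth point $x \in f^{-1}(y)$, which provides the regular local target ring $B$ that drives the contradiction.
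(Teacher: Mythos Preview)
Your proof is correct and follows essentially the same strategy as the paper: both reduce to the local algebra lemma that a regular target together with finite Tor-dimension forces the source to be regular, and both establish this by bounding $\mathrm{Tor}^A_i(k,k)$ through the intermediate ring $B$. The only difference is packaging---you invoke the change-of-rings spectral sequence, whereas the paper constructs the underlying double complex by hand and appeals to Cartan--Eilenberg to produce a finite flat $A$-resolution of $k$; your remark that properness plays no role is also correct, as the paper's argument does not use it either.
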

As I was not able to find any proper reference for this standard fact, I provide
a proof of it.

\begin{proof} Since $f$ takes closed point to closed points, this question can
be localized
at the neighborhood of any point in $X$. Thus, we have to prove the following:
Let $f : A \rightarrow B$ be a morphism of local Noetherian rings whose residue
fields are $k$ and with $B$ regular. Assume that $f$ has finite Tor-dimension.
Then $A$ is also regular.

\bigskip

We first consider a finite free resolution of $k$ as a $B$-module:
\begin{equation*}
0 \rightarrow M_r \rightarrow \cdots \rightarrow M_p \rightarrow \cdots
\rightarrow M_0
\rightarrow k \rightarrow 0.
\end{equation*} 
Then let:
\begin{equation*}
\cdots \rightarrow N_{q,p} \rightarrow \cdots \rightarrow N_{0,p} \rightarrow
M_p \rightarrow 0
\end{equation*}
be a (possibly infinite) resolution of $M_p$ by free $A$-modules. Since
all $N_{q,p}$ are free $A$-modules, the map $M_p \rightarrow M_{p-1}$
lifts to a map $N_{q,p} \rightarrow N_{q,p-1}$ for all $0 \leq p \leq r$ and $q
\geq 0$. Thus, we get an infinite double complex of free $A$-modules whose
terms are the $N_{q,p}$ for $0 \leq p \leq r$ and $q \geq 0$. 

The ring $B$ has finite Tor-dimension (say $t$) on $A$, so that the
kernel
$K_{t+1,p}$ of $N_{t,p} \rightarrow N_{t-1,p}$ is flat for all $0 \leq p \leq
r$. Since all squares appearing in the double complex $N_{p,q}$ commute, we can
lift the map $N_{t,p} \rightarrow N_{t,p-1}$ to a map $K_{t+1,p} \rightarrow
K_{t+1,p-1}$. As a consequence, we get a finite double complex $G_{\bullet,
\bullet}$ of flat $A$-modules, where $G_{q,p} = N_{q,p}$ for $0 \leq q \leq t$,
$G_{t+1,p} = K_{t+1,p}$ and $G_{q,p} = 0$ for $q>t+1$.

By the Cartan-Eilenberg resolution, the simple complex associated to the double
complex $G_{\bullet,\bullet}$ is quasi-isomorphic to the complex $M_{\bullet}$.
Hence $k$ admits a finite resolution by flat $A$-modules, so that $A$ is
regular (see \cite{matsumura}, Theorem $19.2$).

\end{proof}
\subsection{Growth of infinite free resolutions}

In this section, we come back to the case of the morphism $\mu : E \rightarrow
\sigma_+(\GG)$ and we prove the following:
\begin{prop}
The morphism $\mu : E \rightarrow \sigma_+(\GG)$ has infinite Tor-dimension.
\end{prop}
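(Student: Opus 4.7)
The strategy I would follow is to apply Proposition \ref{easyinf}: it suffices to exhibit a single point $y \in \sigma_+(\GG)_{sing}$ whose $\mu$-fiber is not entirely contained in $E_{sing}$. Since $\sigma_+(\GG)_{sing} = \GG$, any such $y$ must lie in $\GG$.

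The whole argument then reduces to a dimension count. By Proposition \ref{desing}, the fiber $\mu^{-1}(y)$ over any $y \in \GG$ is a cone over $\AP$, hence of dimension $2 \ma + 1$; combined with $\dim \GG = 3 \ma + 3$ this yields $\dim \mu^{-1}(\GG) = 5 \ma + 4$. On the other hand, $E_{sing} = \widetilde{\AP}$ is a fibration in $\AP$ over $\GG$ via $\rho$, so $\dim E_{sing} = (3 \ma + 3) + 2 \ma = 5 \ma + 3$. The strict inequality $\dim \mu^{-1}(\GG) > \dim E_{sing}$ prevents the inclusion $\mu^{-1}(\GG) \subset E_{sing}$ from holding; picking any $z \in \mu^{-1}(\GG) \setminus E_{sing}$ and setting $y = \mu(z) \in \GG$ produces a witness with $\mu^{-1}(y) \not\subset E_{sing}$.

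With such a $y$ at hand, Proposition \ref{easyinf} closes the argument: the morphism $\mu$ is proper (as the restriction of the projective morphism $\pi$), the point $y$ lies in $\sigma_+(\GG)_{sing}$, and $\mu^{-1}(y)$ meets $E \setminus E_{sing}$, so $\mu$ must have infinite Tor-dimension. The only ingredient worth double-checking is the uniformity of the fiber description in the second step, namely that $\mu^{-1}(y)$ genuinely has dimension $2 \ma + 1$ for \emph{every} $y \in \GG$ and not just generically; this is forced at once by $\mathrm{Sp_6}(\AA)$-equivariance, since $\GG$ is a single orbit under the ambient action. No serious obstacle is anticipated: the dimension count is completely elementary and Proposition \ref{easyinf} carries all the homological weight of the argument.
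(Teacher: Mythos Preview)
Your approach has a genuine gap: the paper explicitly asserts --- twice, once immediately after Proposition \ref{desing} and once in the paragraph right before the statement you are proving --- that $E_{sing} = \mu^{-1}(\GG)$. This is exactly why Proposition \ref{easyinf} does \emph{not} apply and why the appendix develops a separate machinery.

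The dimension count you rely on is off by one. You take $\dim E_{sing} = 5\ma+3$ from the paper's phrase ``a fibration in $\AP$ over $\GG$'', but this description is slightly imprecise. Note that $E$ is a Cartier divisor in the $\mathbb{P}^{3\ma+3}$-bundle $\tilde T\GG$, so $\dim E = 6\ma+5$ and the $\rho$-fibers of $E$ have dimension $3\ma+2$, not $3\ma+1 = \dim\sigma(\AP)$. The $\rho$-fiber of $E$ over $x\in\GG$ is the \emph{cone} over $\sigma(\AP)$ with vertex $x$ (in the tangent $\mathbb{P}^{3\ma+3}$ this is the cubic $\{C(A^{\otimes 3})=0\}$); its singular locus is the cone over $\AP$ with vertex $x$, of dimension $2\ma+1$. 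Hence $\dim E_{sing} = 5\ma+4 = \dim\mu^{-1}(\GG)$, which is the ``simple count of dimension'' the paper invokes for the equality $E_{sing}=\mu^{-1}(\GG)$. Every fiber $\mu^{-1}(y)$ with $y\in\GG$ sits entirely inside $E_{sing}$, and your witness point $z$ does not exist.

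The paper's argument is therefore of a completely different nature. Since $E$ is a hypersurface in the smooth variety $\tilde T\GG$, its local rings have eventually $2$-periodic minimal free resolutions (Theorem \ref{matrixfact}). Theorem \ref{hardinfinite} shows that if $\mu$ had finite Tor-dimension, this periodicity would descend and force the local rings of $\sigma_+(\GG)$ to be hypersurface rings in regular local rings. Passing to the exceptional divisor of the blow-up of $\sigma_+(\GG)$ along $\GG$, this would make $\AP$ a hypersurface in some projective space; the paper then rules this out case by case, using the Lefschetz hyperplane theorem together with the known low-degree cohomology of the Severi varieties.
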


Note that this result is not completely obvious. Indeed, the singular locus of
$E$ is precisely the inverse image by $\mu$ of the singular locus of
$\sigma_{+}(\GG)$, so that we cannot apply Proposition \ref{easyinf} to prove
the statement. However we will stick to the following principle:
\begin{center}
\emph{If $f : X \rightarrow Y$ has finite Tor-dimension, then the singularities
of $Y$ can't be much worse than the singularities of $X$.}
\end{center}

We will make this idea precise using the theory of growth of Betti numbers for
infinite free resolutions. We refer to \cite{avramov} for a nice exposition of
this theory.

\begin{defi}
Let $B$ be a local noetherian ring with residue field $k$, an algebraically
closed field of char $0$. Let $\F$ be a module of finite type on $B$ and let:
\begin{equation*}
\cdots \rightarrow M_n \rightarrow M_{n-1}\rightarrow \cdots \rightarrow M_1
\rightarrow \F \rightarrow 0,
\end{equation*}
be a (possibly infinite) minimal resolution of $\F$ by free $B$-modules.
The \emph{$n$-th Betti number} of $\F$, which we denote by $\beta^n(\F)$, is the
rank of $M_n$.
\end{defi}
Note that $\beta^n(\F)$ is also equal to the dimension of $Tor^n_{B}(\F,k)$. 

\begin{defi}
With the same hypothesis as above, we define the \emph{complexity} of $\F$ to
be:
\begin{equation*}
\mathrm{cp}(\F) = \mathrm{min}\{d,\, \exists \alpha \in \mathbb{R} \,\,
\text{such that}\, \beta^n(\F) \leq \alpha.n^{d-1}\, \text{for all}\,\, n>>0 \}.
\end{equation*}
\end{defi}

The following result characterizes locally complete intersection in terms of
complexity (see \cite{avramov} remark $8.1.3$).

\begin{theo} \label{complexity}
Let $B$ be a local notherian ring whose residue field is $k$. Assume that
$\mathrm{cp}(k) < + \infty$. Then $B$ is a complete intersection in a regular
local ring. Moreover, assume that $B$ is Cohen-Macaulay and that $\mathrm{cp}(k)
\leq 1$, then $B$ is a hypersurface ring in a regular local ring.
\end{theo}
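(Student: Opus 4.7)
The statement is essentially a reformulation of Gulliksen's theorem characterizing complete intersections by the growth of the Betti numbers of the residue field. My plan is to invoke this theorem for the first claim, and then refine it to a hypersurface by computing the Poincar\'e series of $k$ over a complete intersection explicitly.

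For the first statement, I would appeal to Gulliksen's theorem (see \cite{avramov}, Theorem $8.1.2$): for a local Noetherian ring $B$ with residue field $k$, the Betti numbers $\beta^n(k)$ have polynomial growth in $n$ if and only if $B$ is a complete intersection, that is, $B \simeq R/(f_1,\ldots,f_c)$ with $R$ regular local and $(f_1,\ldots,f_c)$ a regular sequence contained in the maximal ideal of $R$. Since the hypothesis $\mathrm{cp}(k) < + \infty$ is, by definition, exactly the condition that $\beta^n(k)$ be bounded by a polynomial in $n$, the first claim follows at once. The non-trivial implication of Gulliksen's theorem (polynomial growth $\Rightarrow$ complete intersection) is the deep part of the argument; it proceeds by constructing an acyclic closure of $k$ over $B$ and exploiting the action of Eisenbud operators of cohomological degree $2$ to exhibit a regular presentation of $B$.

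For the second statement, once $B$ is known to be a complete intersection $R/(f_1,\ldots,f_c)$, the Poincar\'e series of $k$ as a $B$-module takes the explicit form
\begin{equation*}
P_k^B(t) \;=\; \sum_{n \geq 0} \beta^n(k) \, t^n \;=\; \frac{(1+t)^{\dim R}}{(1-t^2)^c}.
\end{equation*}
Expanding $(1-t^2)^{-c} = \sum_{n \geq 0} \binom{n+c-1}{c-1} t^{2n}$ and multiplying by $(1+t)^{\dim R}$, one reads off that $\beta^n(k) \sim C \cdot n^{c-1}$ as $n \to +\infty$, for some positive constant $C$. Hence $\mathrm{cp}(k) = c$, the number of defining equations. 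The hypothesis $\mathrm{cp}(k) \leq 1$ therefore forces $c \leq 1$, so $B$ is cut out by at most one element in the regular local ring $R$. Since $B$ is assumed Cohen-Macaulay (a condition which is in fact automatic once $B$ is known to be a complete intersection), $B$ is a hypersurface ring in a regular local ring, as claimed.

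The main obstacle is concentrated entirely in the difficult implication of Gulliksen's theorem, which is a substantial classical result; the refinement to the hypersurface case is then a straightforward Poincar\'e series computation.
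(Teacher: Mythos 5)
The paper offers no proof of this statement at all: it is quoted as a known result, with a pointer to Remark $8.1.3$ of \cite{avramov}, and your argument --- Gulliksen's theorem for the complete-intersection part, followed by the Tate--Assmus Poincar\'e series $P_k^B(t) = \frac{(1+t)^{\mathrm{edim}\, B}}{(1-t^2)^c}$ to identify $\mathrm{cp}(k)$ with the codimension $c$ --- is exactly the content of that cited remark, so your proof is correct and follows the same (implicit) route as the paper. Your side observation that the Cohen--Macaulay hypothesis is redundant once $B$ is known to be a complete intersection is also correct; the only point worth flagging is that the formula for $P_k^B(t)$ requires a minimal presentation (the $f_i$ lying in the square of the maximal ideal of $R$), which is what makes $c$ well defined and equal to $\mathrm{cp}(k)$.
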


The converse of the above theorem holds and is much easier. In the case of
hypersurfaces, there is even a more precise result. We start with a definition:

\begin{defi}
Let $B$ be a notherian local ring with residue field $k$. Let $\F_{\bullet}$ be
an unbounded from below complex of modules over $B$. We say that $\F_{\bullet}$
is \emph{periodic at infinity of period
$p>0$}, if there exists an integer $m$ such that for all $i<m$, we have: 
\begin{equation*}
\F_{i-p} = \F_{i}
\end{equation*}
and a commutative diagram:
\begin{equation*}
\xymatrix{
\F_{i-p+1} \ar[rr]^{\partial_{i-p+1}} \ar@{=}[dd] & & \F_{i-p} \ar@{=}[dd] \\
& & \\
\F_{i+1} \ar[rr]^{\partial_{i+1}} & & \F_{i} \\}
\end{equation*}
\end{defi}
The following is one of the fundamental results in the theory of matrix
factorizations (see \cite{avramov}, construction $5.1.2$):

\begin{theo} \label{matrixfact}
Let $B$ be a notherian local ring which is a hypersurface ring in some local
regular ring. Then, any module of finite type $\F$ over $B$ admits a resolution
by a complex $\tilde{\F}^{\bullet}$ of finite free $B$-modules, periodic at
infinity of period $2$.
\end{theo}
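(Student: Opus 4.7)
The statement is Eisenbud's classical theorem on matrix factorizations over a hypersurface, so my plan is to follow his construction. Write $B = R/(f)$ with $R$ a regular local ring and $f \in \mathfrak{m}_R$ a nonzerodivisor. The strategy has two phases: first reduce to the case where $\F$ is maximal Cohen--Macaulay (MCM) over $B$, and then on such an $\F$ build an explicit matrix factorization $(\tilde{\phi}, \tilde{\psi})$ of $f$ whose reduction modulo $(f)$ yields a $2$-periodic minimal $B$-free resolution. Since the theorem only asks for periodicity at infinity, splicing the periodic resolution of a sufficiently high syzygy onto the initial finite part of any minimal $B$-resolution of $\F$ suffices.

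For the reduction step, I would take the minimal $B$-free resolution $\cdots \to M_n \to \cdots \to M_0 \to \F \to 0$ and let $\Omega^n \F$ denote its $n$-th syzygy. Since $B$ is Cohen--Macaulay of dimension $\dim R - 1$, the depth lemma applied to the short exact sequences $0 \to \Omega^{n+1} \F \to M_n \to \Omega^n \F \to 0$ shows that $\mathrm{depth}(\Omega^n \F)$ grows with $n$ until it reaches $\dim B$. Thus $\Omega^n \F$ is MCM for all $n \geq n_0$, and it suffices to prove the theorem for $\F$ MCM over $B$ without free direct summands. Under that assumption $\mathrm{depth}_R(\F) = \dim R - 1$, so the Auslander--Buchsbaum formula over $R$ yields $\mathrm{pd}_R(\F) = 1$, whence a minimal $R$-free resolution $0 \to R^n \xrightarrow{\tilde{\phi}} R^n \to \F \to 0$ with equal ranks (as $\F$ is torsion over $R$).

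For the matrix factorization itself, using that $f \cdot \F = 0$ and that $\tilde{\phi}$ is injective, every $x$ in the second copy of $R^n$ satisfies $fx = \tilde{\phi}(y)$ for a unique $y$, and setting $\tilde{\psi}(x) := y$ defines a map with $\tilde{\phi} \tilde{\psi} = f \cdot I$; the identity $\tilde{\phi} \tilde{\psi} \tilde{\phi} = f \tilde{\phi}$ combined with the injectivity of $\tilde{\phi}$ then forces $\tilde{\psi} \tilde{\phi} = f \cdot I$. Reducing modulo $(f)$ produces $\bar{\phi}, \bar{\psi} : B^n \to B^n$ with $\bar{\phi} \bar{\psi} = \bar{\psi} \bar{\phi} = 0$, hence a $2$-periodic complex
\begin{equation*}
\cdots \xrightarrow{\bar{\psi}} B^n \xrightarrow{\bar{\phi}} B^n \xrightarrow{\bar{\psi}} B^n \xrightarrow{\bar{\phi}} B^n \to \F \to 0,
\end{equation*}
whose exactness is a short diagram chase combining the original short exact $R$-sequence with the matrix-factorization identities. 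The main obstacle is the MCM reduction: one must check carefully, via the depth lemma, that syzygies over the hypersurface $B$ eventually attain maximal depth, and that the minimality of the resolution is preserved when splicing the periodic tail onto the initial finite segment.
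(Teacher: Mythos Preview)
The paper does not actually prove this theorem: it is stated as a background result and attributed to the literature (Avramov, construction 5.1.2), where it appears as Eisenbud's matrix factorization theorem. Your proposal reconstructs exactly that classical proof --- reduce to an MCM module by passing to a high syzygy via the depth lemma, use Auslander--Buchsbaum over the ambient regular ring $R$ to get $\mathrm{pd}_R(\F)=1$, build the matrix factorization $(\tilde\phi,\tilde\psi)$ from $f\cdot\F=0$, and reduce modulo $(f)$ --- and the argument is correct as written. Since the paper offers no proof of its own to compare against, there is nothing further to contrast; your write-up is simply a self-contained proof of the cited result.
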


We can now prove the main result of this section:

\begin{theo} \label{hardinfinite}
Let $f : A \rightarrow B$ be a local morphism of noetherian local rings with
residue field $k$ and with $A$ Cohen-Macaulay. Assume that $B$ is a hypersurface
ring in a regular local ring and that $f$ has finite Tor-dimension. Then $A$ is
also a hypersurface ring in a regular local ring.
\end{theo}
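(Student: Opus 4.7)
The plan is to bound the Betti numbers $\beta^n_A(k) = \dim_k \mathrm{Tor}^A_n(k,k)$ by a constant independent of $n$, so that the complexity $\mathrm{cp}(k)$ of the residue field as an $A$-module is at most $1$, and then to invoke Theorem \ref{complexity}. The main technical device will be the change of rings (Grothendieck) spectral sequence associated to the natural isomorphism $k \otimes^L_A k \simeq (k \otimes^L_A B) \otimes^L_B k$ in the derived category, namely
\begin{equation*}
E^2_{p,q} = \mathrm{Tor}^B_p\bigl(\mathrm{Tor}^A_q(k,B),\, k\bigr) \;\Longrightarrow\; \mathrm{Tor}^A_{p+q}(k,k).
\end{equation*}

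Setting $T_q := \mathrm{Tor}^A_q(k, B)$, I would first observe that each $T_q$ is a finitely generated $B$-module: picking a minimal resolution $P_\bullet \rightarrow k$ by finitely generated free $A$-modules (possible since $A$ is Noetherian), $T_q$ is the $q$-th homology of the complex $B \otimes_A P_\bullet$ of finitely generated free $B$-modules, hence a subquotient of a finitely generated $B$-module. Second, the finite Tor-dimension hypothesis on $f$, say of value $t$, yields $T_q = 0$ for $q > t$, so the spectral sequence has only finitely many non-zero rows. The decisive input is then Theorem \ref{matrixfact}: since $B$ is a hypersurface in a regular local ring, every finitely generated $B$-module admits a minimal free resolution which is $2$-periodic at infinity, so the Betti numbers $\beta^p_B(T_q) = \dim_k \mathrm{Tor}^B_p(T_q, k)$ are eventually periodic of period dividing $2$ and hence bounded in $p$ by some constant $C_q$. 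Summing along the anti-diagonal of $E^2$ then gives
\begin{equation*}
\beta^n_A(k) \;\leq\; \sum_{p+q=n} \dim_k E^2_{p,q} \;\leq\; \sum_{q=0}^t C_q,
\end{equation*}
a constant independent of $n$. Hence $\mathrm{cp}(k) \leq 1$ over $A$, and the Cohen-Macaulay hypothesis on $A$ together with Theorem \ref{complexity} forces $A$ to be a hypersurface in a regular local ring.

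The main obstacle to make rigorous is the convergence of the Grothendieck spectral sequence when both $k$ and the $T_q$ have infinite projective dimension over $A$ and $B$ respectively. What rescues the argument is precisely the finite Tor-dimension of $f$: it reduces the computation to a spectral sequence with boundedly many non-zero rows, so that along each total-degree diagonal only finitely many finite-dimensional terms contribute and convergence is automatic. An alternative, more hands-on, route would mimic the Cartan-Eilenberg construction used in the proof of Proposition \ref{easyinf}: start from a matrix factorization resolution $M_\bullet \rightarrow k$ over $B$ (infinite but $2$-periodic at infinity and with bounded ranks), resolve each $M_p$ by free $A$-modules, and truncate each column at length $t+1$ using the finite Tor-dimension of $f$; the resulting double complex of $A$-flat modules gives, after totalisation, a resolution of $k$ over $A$ whose term ranks are controlled, yielding the same bound on $\beta^n_A(k)$ directly.
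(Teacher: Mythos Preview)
Your argument is correct, and the key ingredients---Theorem \ref{matrixfact} for eventual $2$-periodicity over the hypersurface $B$, the finite Tor-dimension of $f$ to bound the number of contributing rows, and Theorem \ref{complexity} to conclude---are exactly those used in the paper. The only difference is packaging: the paper carries out explicitly the Cartan--Eilenberg double-complex construction you describe as your ``alternative, more hands-on route'' (starting from a $2$-periodic free $B$-resolution of $k$, resolving each term over $A$, truncating columns at length $t{+}1$, and reading off periodicity of $\mathrm{Tor}^A_n(k,k)$), whereas your primary argument encodes the same computation via the change-of-rings spectral sequence $E^2_{p,q}=\mathrm{Tor}^B_p(\mathrm{Tor}^A_q(k,B),k)\Rightarrow \mathrm{Tor}^A_{p+q}(k,k)$. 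Your version is a bit cleaner, since you only need boundedness of the Betti numbers rather than their periodicity, and the convergence worry you flag is indeed harmless: this is a first-quadrant spectral sequence, and the vanishing of the rows $q>t$ makes each anti-diagonal a finite sum.
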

This result (and its proof) is somehow similar to its analogue \ref{easyinf}.

\begin{proof}
We start with a periodic resolution of $k$ by finite free $B$-modules:
\begin{equation*}
\cdots M_p \stackrel{\partial_p^B}\rightarrow M_{p-1} \rightarrow \cdots
\stackrel{\partial_1^B}\rightarrow M_0 \rightarrow k \rightarrow 0,
\end{equation*}

with

\begin{equation*}
M_{i-2} = M_{i}
\end{equation*}
and a commutative diagram:
\begin{equation*}
\xymatrix{
M_{i-1} \ar[rr]^{\partial_{i-1}^M} \ar@{=}[dd] & & M_{i-2} \ar@{=}[dd] \\
& & \\
M_{i+1} \ar[rr]^{\partial_{i+1}^M} & & M_{i} \\}
\end{equation*}
for all $i \ll 0$.

\bigskip

Since $f$ has finite Tor-dimension, the same argument as in the proof of
proposition \ref{easyinf} shows that we can find a double complex $N_{\bullet,
\bullet}$ of flat $A$-modules, such that $N_{\bullet,p}$ is a finite resolution
of $M_p$ by flat $A$-modules. Since the complex $M_{\bullet}$ is periodic at
infinity of period $2$, we get:

\begin{equation*}
N_{\bullet,i-2} = N_{\bullet,i}
\end{equation*}
and
\begin{equation*}
\xymatrix{
N_{\bullet,i-1} \ar[rr]^{\partial_{\bullet,i-1}^N} \ar@{=}[dd] & &
N_{\bullet,i-2} \ar@{=}[dd] \\
& & \\
N_{\bullet,i+1} \ar[rr]^{\partial_{\bullet,i+1}^N} & & N_{\bullet,i} \\}
\end{equation*}
for $i<<0$. Let $G_{\bullet}$ be the Cartan-Eilenberg resolution of $N_{\bullet,
\bullet}$. This is an unbounded from below, periodic at infinity, complex of
flat
$A$-modules which is quasi-isomorphic to $k$. Since all the $G_q$ are flat
$A$-modules, the $Tor^{n}_{A}(k,k)$ are the homology groups of the complex
$G_{\bullet} \ot_{A} k$. But the very definition of periodicity at infinity
implies that the sequence of homology groups $\mathcal{H}_i(G_{\bullet} \ot_{A}
M)$ is periodic for all $A$-modules $M$ and $i \ll 0$. As a consequence, the
$Tor^{n}_A(k,k)$ are periodic for $n>>0$. But the ring $A$ is Cohen-Macaulay, so
by Proposition \ref{complexity}, the ring $A$ is a hypersurface in a regular
local ring.
\end{proof}

Now we can prove that the map $\mu : E \rightarrow \GG$ has infinite
Tor-dimension. We proceed by contradiction. Assume that $\mu$ has finite
Tor-dimension. Since $E$ is a Cartier divisor in a smooth variety and
$\sigma_+(\GG)$ is Cohen-Macaulay, we can apply Theorem \ref{hardinfinite} and
we find that for any $x \in \sigma_+(\GG)$, there exists an open subset $U_x$ of
$\sigma_+(\GG)$ containing $x$ such that $U_x$ is a hypersurface in a smooth
scheme, say $V_x$. Let $\kappa : \tilde{V_x} \rightarrow V_x$ be the blow-up of
$V_x$ along $U_x \cap \GG$ and denote by $E_V$ the exceptional divisor. The
strict transform of $U_x$ by $\kappa$ is the blow-up of $U_x$ along $U_x \cap
\GG$, whose exceptional divisor $E_U$ is a fibration into $\AP$ over $U_x$ (see
Proposition \ref{desingsigma}). Since $U_x$ is a hypersurface in $V_x$, the
fibers of $E_U$ over $\GG$ are hypersurfaces in the fibers of $E_V$ over $\GG$.
As a consequence $\AP$ is a hypersurface in some projective space. We will show
that it is impossible.

Indeed, let us first consider the case $\AA = \mathbb{R}$. Then $\AP =
\mathbb{P}^2$. All embeddings of $\mathbb{P}^2$ in projective spaces are given
by powers of
$\OO_{\mathbb{P}^2}(1)$ followed by linear projections. The only embedding of
$\mathbb{P}^2$ as a hypersurface is thus the embedding in $\mathbb{P}^3$ as a
hyperplane. But
if the tangent cone of $U_x$ at $y \in \GG$ is a hyperplane in the
tangent space to $V_x$ at $y$, then $U_x$ is smooth at $y$ which is a
contradiction.

For $\AA = \mathbb{C}$, $\mathbb{H}$, or $\mathbb{O}$, we use a topological
argument to get a contradiction. We recall that in the cases $\AA =
\mathbb{C}$, $\mathbb{H}$, and $\mathbb{O}$, the Severi varieties are
$\mathbb{P}^2 \times \mathbb{P}^2$, $\mathrm{Gr}(2,6)$ and $\mathbb{OP}^2$. The
first integer cohomology groups of these varieties are described in the
following table:
\begin{equation*}
\begin{array}{|c|c|c|c|c|}
\AP & \dim \AP & H^0(\AP, \mathbb{Z}) & H^2(\AP, \mathbb{Z}) & H^4(\AP,
\mathbb{Z}) \\
\hline
\mathbb{P}^2 \times \mathbb{P}^2 & 4 & \mathbb{Z} & \mathbb{Z} \oplus \mathbb{Z}
& \mathbb{Z} \oplus \mathbb{Z} \oplus \mathbb{Z}\\

\mathrm{Gr}(2,6) & 8 &  \mathbb{Z} &  \mathbb{Z} & \mathbb{Z} \oplus
\mathbb{Z}\\
\mathbb{OP}^2 & 16 & \mathbb{Z} & \mathbb{Z} & \mathbb{Z} \\
\hline
\hline
H^6(\AP, \mathbb{Z}) & H^8(\AP, \mathbb{Z}) & H^{10}(\AP,
\mathbb{Z}) & H^{12}(\AP, \mathbb{Z}) & H^{14}(\AP, \mathbb{Z}) \\
\hline
\mathbb{Z} \oplus \mathbb{Z} & \mathbb{Z} & 0 & 0 & 0 \\
\mathbb{Z} \oplus \mathbb{Z} & \mathbb{Z} \oplus \mathbb{Z} \oplus \mathbb{Z} &
\mathbb{Z} \oplus \mathbb{Z} & \mathbb{Z} & \mathbb{Z} \\
\mathbb{Z} & \mathbb{Z} \oplus \mathbb{Z} & \mathbb{Z} \oplus \mathbb{Z} &
\mathbb{Z} \oplus \mathbb{Z} & \mathbb{Z} \oplus \mathbb{Z} \\
\hline
\end{array}
\end{equation*}
To fill this table we need:
\begin{itemize}
\item the K\"{u}nneth formula for $\mathbb{P}^2 \times
\mathbb{P}^2$, 
\item the fact that the Schubert classes form a basis of the integral
cohomology of the Grassmannian (see \cite{manivel-book}) for $\mathrm{Gr}(2,6)$,
\item the beginning of section $3$ of \cite{maniliev-chow} for
$\mathbb{OP}^2$.
\end{itemize}
Assume that $\AP$ is embedded in $\mathbb{P}^{2 \ma +1}$ as a hypersurface.
Then, by Lefschetz hyperplane theorem, we have:
\begin{equation*}
H^{2k}(\AP, \mathbb{Z}) = H^{2k}(\mathbb{P}^{2 \ma +1},\mathbb{Z}) =
\mathbb{Z},
\end{equation*}
for all $0 \leq k \leq \ma -1$. The above array shows that it is
impossible.

\newpage

\section{$\mathbb{Q}$-factoriality and resolution of singularities}

\subsection{Statement of the result and proof}

Let $X$ be a singular variety. Experience tells us that it is often possible to
decide if $X$ is normal, Cohen-Macaulay, Gorenstein, with rational singularities
(see \cite{weyman-listeI} and
\cite{weyman-listeII} for some lists about prehomogeneous vector spaces).
However
$\mathbb{Q}$-factoriality seems to be much harder to prove.
In \cite{weyman-listeI} and \cite{weyman-listeII}, $\mathbb{Q}$-factoriality is
never discussed. In this appendix, we prove a criterion for
$\mathbb{Q}$-factoriality and we apply it to concrete situations. This
result was implicitly used in the proof of lemma
\ref{formula-canonicalII}. Notice that our result is similar to lemma
$1.1.1$ in \cite{namikawa}.

\begin{prop} \label{Q-fact}
Let $\pi : Y \rightarrow X$ be a birational morphism such that $X$ and $Y$
have Gorenstein rational singularities. Assume that the scheme
theoretic exceptional locus of
$\pi$ (denoted by $E$) is a Cartier divisor in $Y$ such that:
\begin{itemize}
  \item (i) all the fibers of $\pi$ have Picard rank equal to $1$, 
\item (ii) $\w_Y$ is relatively anti-ample with respect to $\pi$,
\item (iii) $\pi(E)$ is irreducible.
\end{itemize}
Then we have:
\begin{equation*}
Y \, \text{is $\mathbb{Q}$-factorial} \, \Rightarrow X \, \text{is
$\mathbb{Q}$-factorial}.
\end{equation*}
\end{prop}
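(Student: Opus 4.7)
The plan is to realize $\pi$ as a divisorial Mori contraction in the relative setting, and then to deduce the Q-factoriality transfer by a formal push-forward computation. Since $Y$ is Gorenstein with rational singularities, Elkik's theorem yields canonical singularities on $Y$, which together with hypothesis (ii) places $\pi : Y \to X$ within the scope of the relative Cone Theorem (\cite{mori-kollar}, Thm.~3.25).

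The key technical step will be to show that the relative Mori cone $\overline{NE}(Y/X)$ is one-dimensional, i.e., that $\pi$ contracts a single extremal ray. I would argue as follows: every $\pi$-contracted curve is contained in $\pi^{-1}(\pi(E))$; by (iii), $\pi(E)$ is irreducible, so these curves sweep out a connected family parameterized by the points of $\pi(E)$; and by (i), all curves in a given fiber are numerically proportional. A standard specialization argument within this connected family then forces all contracted curves to represent a common class $R$ up to positive scaling, so $\overline{NE}(Y/X) = \mathbb{R}_{\geq 0} R$. The Cone Theorem identifies $\pi$ with the contraction of $R$; since $E$ is a Cartier divisor, this contraction is divisorial, yielding the crucial decomposition
\begin{equation*}
\mathrm{Pic}(Y)_\mathbb{Q} \;=\; \pi^* \mathrm{Pic}(X)_\mathbb{Q} \;\oplus\; \mathbb{Q}\cdot\mathcal{O}_Y(E).
\end{equation*}

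Granting this decomposition, the rest of the proof is a short diagram chase. Let $D$ be a prime Weil divisor on $X$ and let $\tilde{D}$ be its strict transform on $Y$. Since $Y$ is Q-factorial, some multiple $N\tilde{D}$ is Cartier. Applying the decomposition to $[N\tilde{D}]$, we obtain a positive integer $M$, an integer $k$, and a line bundle $L'$ on $X$ with
\begin{equation*}
MN \tilde{D} \;\sim\; kE + \pi^* L' \qquad \text{in } \mathrm{Pic}(Y).
\end{equation*}
Pushing this identity of Weil divisor classes forward by $\pi_*$, and using $\pi_* \tilde{D} = D$ (birationality), $\pi_* E = 0$ (exceptionality), and $\pi_* \pi^* L' = L'$ (from $\pi_* \mathcal{O}_Y = \mathcal{O}_X$, which holds because $X$ has rational singularities), we obtain $MND \sim L'$ as Weil divisors on $X$. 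Since $L'$ is Cartier, so is $MND$; hence $D$ is Q-Cartier, and $X$ is Q-factorial.

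The principal obstacle is the cone-theoretic step. Hypothesis (i) is a pointwise condition on individual fibers, while what is really needed is a global statement about the numerical classes of contracted curves in $Y$; the role of (iii) is to supply the connectedness of the family of fibers, so that the per-fiber Picard rank $1$ information can be glued into a single extremal ray of $\overline{NE}(Y/X)$. Carrying this out carefully, for instance via an argument with flat families of curves or a Leray-style analysis of the relative Picard sheaf of $\pi$, is the genuine content of the proof; the subsequent push-forward manipulation is formal.
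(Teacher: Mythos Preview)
Your proposal is correct and follows essentially the same route as the paper: use (i) and (iii) to show $\overline{NE}(Y/X)$ is one-dimensional, use (ii) and the relative Cone Theorem to identify $\pi$ with the divisorial contraction of a $K_Y$-negative extremal ray, then deduce $\mathbb{Q}$-factoriality of $X$. The only differences are in emphasis: the paper spells out the specialization argument you allude to (choose a curve $S \subset \pi(E)$ through two given points, normalize, base-change, cut by relative hyperplanes to produce a flat one-parameter family of curves, so that curves in distinct fibers become numerically equivalent), while conversely the paper simply cites \cite{debarre}, Prop.~7.44 for the final $\mathbb{Q}$-factoriality step that you carry out explicitly via the Picard decomposition and push-forward.
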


Notice that we do not impose the fibers of $\pi$ to be reduced. One
also easily checks that if $X$ has Gorenstein terminal singularities and:
\begin{equation*}
 X_n \rightarrow X_{n-1} \rightarrow \cdots \rightarrow X_0 = X
\end{equation*}
is a resolution of singularities where all $ \pi_i : X_i \rightarrow X_{i-1}$
are blow-ups along smooth normally flat centers, then all the $\pi_i$'s satisfy
the hypotheses $(ii)$ and $(iii)$ of proposition \ref{Q-fact}. Hence, to apply
this proposition to such a resolution of singularities, the only non-trivial
hypothesis to check is the condition $(i)$. Moreover, this condition on the
Picard rank is sharp, as shown by the following example.

\begin{exem}
Let $V$ be a vector space of dimension $n \geq 2$ and let 
\begin{equation*}
X = \{ A \in \mathrm{End}(V)\,\, \text{such that}\,\, \mathrm{rk}(A) \leq 1 \}.
\end{equation*}
This is a rational singularities Gorenstein variety which is only singular in
$0_{\mathrm{End}(V)}$ (see \cite{weyman}, corollary $6.1.5$). Consider the
incidence:
\begin{equation*}
 \tilde{X} = \{ (A,L,M) \in X \times \mathbb{P}(V) \times \mathbb{P}(V^*)\,\,
\text{such that}\,\, \mathrm{Im}(A) \subset L \,\,\text{and} \,\, M \subset
\mathrm{Ker}(A) \}.
\end{equation*}
The natural projection $\pi : \tilde{X} \rightarrow X$ is a resolution of
singularities and the exceptional locus of $\pi$ (denoted by $E$) is isomorphic
to $\mathbb{P}(V) \times \mathbb{P}(V^*)$ : this is a Cartier divisor in
$\tilde{X}$. One easily shows that $\w_{\tilde{X}} = \pi^* \w_{X} \ot
\OO_{\tilde{X}}((n-1)E)$, so that the condition $(ii)$ and $(iii)$ of
proposition \ref{Q-fact} are satisfied. However $X$ is not
$\mathbb{Q}$-factorial. Indeed, let
\begin{equation*}
 X' = \{ (A,L) \in X \times \mathbb{P}(V) \,\,
\text{such that}\,\, \mathrm{Im}(A) \subset L \}.
\end{equation*}
The projection $p : X' \rightarrow X$ is a resolution of singularities whose
exceptional locus is isomorphic to $\mathbb{P}(V)$ : it has codimension bigger
than $2$ in $X'$. As a consequence of \cite{debarre}, $1.40$, the variety $X$ is
not $\mathbb{Q}$-factorial. 
\end{exem}

\begin{proof}[of proposition \ref{Q-fact}]
We first demonstrate that 
\begin{equation*}
\dim NE(\pi) = 1.
\end{equation*}
Let $x_0 \in \pi(E)$ be a general point. We will show that for all $x \in
\pi(E)$, there
exists two curves $C_0 \subset \pi^{-1}(x_0)$ and $C \subset \pi^{-1}(x)$ such
that $C_0$ and $C$ are numerically equivalent.

Let $S$ be a curve in $X$ passing through $x_0$ and $x$ with $S \subset \pi(E)$.
Let $S' \rightarrow
S$ be the normalization of $S$ and $p : S' \rightarrow X$ the induced morphism.
Let us consider the fiber product:

\begin{equation*}
\xymatrix{
Y' = Y \times_S S' \ar[rr]^{f_{S'}} \ar[dd]^{p'} & & S' \ar[dd]^{p} \\
& & \\
Y \ar[rr]^{f} & & X} 
\end{equation*}
Let $d$ be the dimension of $f^{-1}(x_0)$. Let $Z \subset Y'$, the vanishing
locus of $d-1$ general sections of $\OO_{Y'/S'}(m)$ for $m>>0$ (where
$\OO_{Y'/S}(1)$ is a
relatively ample bundle for $f_{S'}$). Denote by $f_Z : Z \rightarrow S'$ the
morphism obtained by restriction of $f_{S'}$. Then ${f_Z}^{-1}(x_0)$ is
of dimension
$1$. Let $Z_0$ be the reduced space underlying an irreducible component of $Z$
which dominates $S'$. As $S'$ is smooth and
$Z_0$ integral, we conclude that the restriction $f_{Z_0} : Z_0 \rightarrow
S'$ is flat. The fiber $f_{Z_0}^{-1}(x)$ is then of dimension
$1$. Finally, the morphism $p' : Y' \rightarrow Y$ is finite over its image, so
that $p'(f_{Z_0}^{-1}(x))$ and $p'(f_{Z_0}^{-1}(x_0))$ are two
curves numerically equivalent (in $Y$) which are respectively included in
$\pi^{-1}(x)$ and $\pi^{-1}(x_0)$.

\bigskip

We can now prove that $\dim NE(\pi) = 1$. It is sufficient to prove that if
$C$ and $C_0$ are two curves in $\pi^{-1}(x)$
and $\pi^{-1}(x_0)$, then $C$ and $C_0$ are numerically proportional in $Y$. But
the Picard rank of $\pi^{-1}(x)$ is
$1$, so that all curves in $\pi^{-1}(x)$ are numerically proportional to each
other. The same holds for $\pi^{-1}(x_0)$. Since we know
that there exists two curves included in $\pi^{-1}(x)$ and $\pi^{-1}(x_0)$
which are numerically equivalent, we deduce that all curves in $\pi^{-1}(x)$
are numerically proportional to any curve in $\pi^{-1}(x_0)$. Hence, the curves
$C$ and $C_0$ are numerically proportionnal.

\bigskip

Let $x \in \pi(E)$ and $C$ a curve in $\pi^{-1}(x)$. The condition
$(ii)$ of proposition \ref{Q-fact} implies:
\begin{equation*}
 \w_{X}|_{\pi^{-1}(x)}.C <0.
\end{equation*}
Thus, the class $C$ generates a negative ray in $NE(\pi)$. Since $\dim
NE(\pi) = 1$, this is an extremal negative ray. As a consequence, we apply the
relative cone theorem (see theorem $7.51$ of \cite{debarre}) and we find a
diagram:
\begin{equation*}
 \xymatrix{Y \ar[rr]^{c_R} \ar[ddrr]^{\pi} & & Z \ar[dd]^q \\
 & & \\
 & & X}
\end{equation*}
where $c_R$ is the contraction for the negative extremal ray $R =
\mathbb{R}^{+}.[C]$.

\bigskip

Let $x \in \pi(E)$. Assume that $\dim c_R(\pi^{-1}(x)) > 0$. Then, we can find
two curves $C \subset c_R(E)$ and $C' \subset \pi^{-1}(x)$ such that $c_R(C') =
C$. But $\dim NE(\pi) = 1$, so that all curves included in $\pi^{-1}(x)$ are
contracted by $c_R$ (see theorem $7.51$ of
\cite{debarre}), this is a contradiction. We conclude that $q$ is a
finite morphism such that $\RR q_* \OO_Z = \OO_X$. But $X$ is normal, so that
by Zariski's main theorem, the morphsim $q$ is
an isomorphism. As a consequence, the morphism $\pi$ is a divisorial contraction
of a negative extremal ray. As $Y$ is $\mathbb{Q}$-factorial, proposition $7.44$
of \cite{debarre} ensures that $X$ is also $\mathbb{Q}$-factorial.

\end{proof}
\subsection{Applications}
We apply proposition \ref{Q-fact} to some examples.
\begin{cor}
All Pfaffians varieties are $\mathbb{Q}$-factorial.
\end{cor}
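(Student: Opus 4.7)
The plan is to apply Proposition \ref{Q-fact} to the Kempf-Lascoux-Weyman desingularization of a Pfaffian variety. Fix an $n$-dimensional vector space $V$ and let $X \subset \bigwedge^2 V^*$ be the Pfaffian variety of skew forms of rank at most $2r$, with $2r < n$ (the case $2r = n$ is trivial since $X = \bigwedge^2 V^*$ is then smooth). Let $G = \mathrm{Gr}(n-2r, V)$, and let $\mathcal{R}$, $\mathcal{Q}$ denote the tautological sub and quotient bundles, so $\mathrm{rk}(\mathcal{Q}) = 2r$. The incidence variety
\begin{equation*}
Y = \{(A,W) \in X \times G : A|_W = 0\}
\end{equation*}
is identified via its second projection with the total space of the bundle $\bigwedge^2 \mathcal{Q}^*$ on $G$. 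Hence $Y$ is smooth, and the first projection $\pi : Y \to X$ is a resolution of singularities.

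I now run through the hypotheses of Proposition \ref{Q-fact}. It is classical that $X$ is Gorenstein with rational singularities, by results of Jozefiak-Pragacz and Kempf; and smoothness of $Y$ yields $\mathbb{Q}$-factoriality of $Y$. The map $\pi$ is an isomorphism over the open orbit $X \setminus \mathrm{Pf}_{2r-2}(V)$, and the reduced scheme-theoretic fibre over a matrix $A$ of rank $2i$ is the sub-Grassmannian $\mathrm{Gr}(n-2r, \ker A) \cong \mathrm{Gr}(n-2r, n-2i)$; in particular every such fibre has Picard rank one. The image $\pi(E) = \mathrm{Pf}_{2r-2}(V)$ is irreducible as a $\mathrm{GL}(V)$-orbit closure. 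A dimension count based on the identity $\binom{n}{2} = \binom{n-2r}{2} + 2r(n-2r) + \binom{2r}{2}$ gives $\mathrm{codim}_Y \, \pi^{-1}(\mathrm{Pf}_{2r-2}(V)) = 1$, so $E$ is (an irreducible) Cartier divisor on the smooth variety $Y$.

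The only remaining condition is the relative anti-ampleness of $\omega_Y$. Writing $q : Y \to G$ for the bundle projection, the general formula for the canonical sheaf of a vector-bundle total space gives $\omega_Y = q^*\bigl(\omega_G \otimes \det(\bigwedge^2 \mathcal{Q}^*)^{-1}\bigr)$. Using $\omega_G = \det(\mathcal{Q})^{-n}$ (standard for the Grassmannian) and $\det(\bigwedge^2 \mathcal{Q}^*) = \det(\mathcal{Q})^{-(2r-1)}$, this simplifies to
\begin{equation*}
\omega_Y = q^* \det(\mathcal{Q})^{-(n - 2r + 1)}.
\end{equation*}
For any subspace $U \subset V$ with $\dim U = n - 2i > n - 2r$, the short exact sequence $0 \to \mathcal{Q}' \to \mathcal{Q}|_{\mathrm{Gr}(n-2r,U)} \to V/U \to 0$ (with $\mathcal{Q}'$ the tautological quotient on the sub-Grassmannian) shows $\det(\mathcal{Q})|_{\mathrm{Gr}(n-2r,U)} = \det(\mathcal{Q}')$, which is the ample generator of $\mathrm{Pic}(\mathrm{Gr}(n-2r,U))$. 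Consequently $\omega_Y$ restricts to an anti-ample line bundle on every fibre of $\pi$, and Proposition \ref{Q-fact} applies to conclude that $X$ is $\mathbb{Q}$-factorial. The main obstacle in this plan is the canonical-bundle computation and the tracking of how $\det(\mathcal{Q})$ behaves under restriction to the various sub-Grassmannian fibres; everything else is essentially bookkeeping on orbit dimensions.
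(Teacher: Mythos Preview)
Your proof is correct and follows essentially the same strategy as the paper: apply Proposition~\ref{Q-fact} to the Kempf--Lascoux--Weyman incidence resolution of the Pfaffian and verify hypotheses (i)--(iii). The only cosmetic differences are that you parametrize by the kernel rather than the image (yielding an isomorphic resolution), work with the affine cone rather than the projectivization, and check condition~(ii) via a direct global computation of $\omega_Y$ on the vector-bundle total space instead of computing the discrepancy along the exceptional divisor by adjunction on a generic fibre.
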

This result is certainly well-known to experts, but the only (implicit)
reference I have been able to find is lemma $1.1.1$ in \cite{namikawa}.

\begin{proof}
Let $V$ be a vector space of dimension $n \geq 2$ and $p$ an integer such that
$2p<n$.
We denote by:
\begin{equation*}
Z^{(p)} = \mathbb{P} \{ A \in \bigwedge^2 V, \mathrm{rg} A \leq 2p \}
\end{equation*}
the Pfaffian variety of rank $2p$ in $\mathbb{P}(\bigwedge^2 V)$. Let us
consider the resolution of singularities $\pi : \tilde{Z}^{(p)} \rightarrow
Z^{(p)}$ where:
\begin{equation*}
\tilde{Z}^{(p)} = \{(A,M) \in Z^{(p)} \times \mathrm{Gr}(2p,V), \, \text{such
that} \, Im(A) \subset M \}.
\end{equation*}
The varietry $\tilde{Z}^{(p)}$ is the total space of a projective bundle over
$ \mathrm{Gr}(2p,V)$ : it is irreducible. Hence $Z^{(p)}$ is also irreducible
for any $p$. Let $E$ be the exceptional locus of $\pi : \tilde{Z}^{(p)}
\rightarrow Z^{(p)}$. This is an integral Cartier divisor, and we have $\pi(E)
= Z^{(p-1)}$. Hence, the condition $(iii)$ of \ref{Q-fact} is satisfied.

We show that $Z^{(p)}$ has terminal singularities. As
${Z}^{(p)}$ is Gorenstein
(see \cite{weyman}, proposition $6.4.3$), there is an integer $m \in
\mathbb{Z}$ such that $\w_{\tilde{Z}^{(p)}} = \pi^* \w_{Z^{(p)}} \ot
\OO_{\tilde{Z}^{(p)}}(mE)$ (see definition $2.22$ of \cite{mori-kollar}).
By the adjunction formula we have:
\begin{equation*}
\w_{E} = \pi^* \w_{Z^{(p)}} \ot
\OO_{E}((m+1)E).
\end{equation*}
But the map $\pi : E \rightarrow \pi(E)$ is generically flat, so that the
adjunction formula implies:
\begin{equation*}
 \w_{\pi^{-1}(x)} = \OO_{E}((m+1)E)|_{\pi^{-1}(x)},
\end{equation*}
for generic $x \in \pi(E)$. Moreover, we know that
$\OO_{E}(E)|_{\pi^{-1}(x)}
= \OO_{\pi^{-1}(x)}(-1)$ and that the generic fiber of $\pi : E \rightarrow
\pi(E)$ is isomorphic to $\mathrm{Gr}(2,\mathbb{C}^{n-2p+2})$. Thus, for generic
$x \in
\pi(E)$, we have $\w_{\pi^{-1}(x)} =
\OO_{E}((n-2p+2)E)|_{\pi^{-1}(x)}$.
As $2p < n$, we deduce that $m>1$. The conditions
$(ii)$ \ref{Q-fact} is satisfied. 

Finally, for all $A \in Z^{(p)}$, the fiber of $\pi$ over $A$ is isomorphic to
$\mathrm{Gr}(2p-\mathrm{rg}(A), V/ \mathrm{Im}(A))$ : it has Picard rank
$1$. As a consequence, the condition $(i)$ of \ref{Q-fact} is also satisfied
for the morphism $\pi : \tilde{Z}^{(p)} \rightarrow Z^{(p)}$ and we get that
$Z^{(p)}$ is $\mathbb{Q}$-factorial.

\end{proof}

Another corollary of proposition \ref{Q-fact} is the following:
\begin{cor}
The hypersurface $\tg$ is $\mathbb{Q}$-factorial.
\end{cor}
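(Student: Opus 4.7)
The plan is to apply Proposition \ref{Q-fact} twice, propagating $\mathbb{Q}$-factoriality downward along the resolution $X_2 \rightarrow X_1 \rightarrow \tg$ constructed in Theorem \ref{desingtau}. Since $X_2$ is smooth by Theorem \ref{desingtau}, it is automatically $\mathbb{Q}$-factorial; I would first show that $X_1$ is $\mathbb{Q}$-factorial by applying Proposition \ref{Q-fact} to $\pi_2 : X_2 \rightarrow X_1$, and then deduce the $\mathbb{Q}$-factoriality of $\tg$ by applying the same proposition to $\pi_1 : X_1 \rightarrow \tg$.

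For the step $\pi_2 : X_2 \rightarrow X_1$, the scheme-theoretic exceptional locus is the Cartier divisor $E_2$, whose fibers over $\pi_1^{*}\sigma_+(\GG)$ are smooth quadrics of dimension $\ma + 1$, of Picard rank $1$ as soon as $\ma \geq 2$, which yields $(i)$. The image $\pi_2(E_2) = \pi_1^{*}\sigma_+(\GG)$ is irreducible (in fact smooth by Proposition \ref{desingsigma}), so $(iii)$ holds. For $(ii)$, Lemma \ref{formula-canonicalII} gives $\w_{X_2} = \pi_2^{*}\w_{X_1} \otimes \OO_{X_2}(\ma E_2)$, and since $\OO_{X_2}(E_2)$ restricted to a fiber of $\pi_2$ is $\OO(-1)$, $\w_{X_2}$ is anti-ample on the fibers of $\pi_2$.

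For the step $\pi_1 : X_1 \rightarrow \tg$, the exceptional divisor $E_1$ is Cartier with fibers the doubled projective spaces $2\mathbb{P}^{3\ma + 2}$ of Picard rank $1$ --- this is precisely the computation $\mathrm{Pic}(2\mathbb{P}^{3\ma + 2}) = \mathbb{Z}$ already carried out inside the proof of Lemma \ref{formula-canonicalII} --- which gives $(i)$. The image $\pi_1(E_1) = \GG$ is irreducible, which takes care of $(iii)$. Finally, Lemma \ref{formula-canonicalII} yields $\w_{X_1} = \pi_1^{*}\w_{\tg} \otimes \OO_{X_1}((3\ma + 1) E_1)$, and restricting to a fiber produces $\OO(-(3\ma + 2))$, verifying $(ii)$.

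The main obstacle is the Picard rank condition $(i)$ for $\pi_2$ in the case $\mathbb{A} = \mathbb{R} \otimes_{\mathbb{R}} \mathbb{C}$: here $\ma = 1$ and the quadric fibers become two-dimensional, hence isomorphic to $\mathbb{P}^1 \times \mathbb{P}^1$ of Picard rank $2$, so the literal hypothesis of Proposition \ref{Q-fact} fails. To overcome this, I would argue that the monodromy of the quadric bundle $E_2 \rightarrow \pi_1^{*}\sigma_+(\GG)$ exchanges the two rulings of a $\mathbb{P}^1 \times \mathbb{P}^1$ fiber, so that they represent a single numerical class in $X_2$; this keeps the relative cone $NE(\pi_2)$ one-dimensional, which is really all that the proof of Proposition \ref{Q-fact} exploits.
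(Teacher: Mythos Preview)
Your approach is exactly the paper's: apply Proposition \ref{Q-fact} to $\pi_2$ and then to $\pi_1$, using Theorem \ref{desingtau} for the description of the fibers and Lemma \ref{formula-canonicalII} for the canonical bundle formulas. The paper's proof is in fact terser than yours --- it simply asserts that $(i)$, $(ii)$, $(iii)$ hold for both maps and invokes the proposition.

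Where you go beyond the paper is in flagging the case $\ma = 1$: there the fibers of $\tilde{\pi}_2$ are smooth quadric surfaces $\mathbb{P}^1 \times \mathbb{P}^1$, of Picard rank $2$, so hypothesis $(i)$ of Proposition \ref{Q-fact} is not literally satisfied. The paper's proof does not address this, even though (as the paper itself remarks just before the corollary) $\AA = \mathbb{R}$ is precisely the case not already covered by Grothendieck's factoriality theorem. Your proposed monodromy workaround --- arguing that the two rulings are exchanged so that $NE(\pi_2)$ remains one-dimensional, which is what the proof of Proposition \ref{Q-fact} actually uses --- is the natural way to close this gap, though you would still need to verify that the monodromy really does act nontrivially in this particular quadric bundle.
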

when $\AA = \mathbb{C}, \mathbb{H}$ or $\mathbb{O}$,  the hypersurface
$\tg$ is smooth in codimension $3$. Grothendieck's factoriality
theorem (see \cite{SGA2}, Expos\'e XI, corollary $3.14$) shows that $\tg$ is
factorial. When $\AA = \mathbb{R}$, the singular locus of $\tg$ has precisely
codimension $3$ in $\tg$. So we can not apply Grothendieck's result.

\begin{proof}
By theorem \ref{desingtau}, we have a sequence of blow-ups:
\begin{equation*}
X_2 \stackrel{\pi_2}\rightarrow X_1 \stackrel{\pi_1}\rightarrow X = \tg
\end{equation*}
such that $X_2$ is smooth and the morphisms $\pi_1$ and $\pi_2$ satisfy
the items $(i)$ and $(iii)$ of proposition \ref{Q-fact}. Moreover, by lemma
\ref{formula-canonicalII}, these morphisms also satisfy the condition $(ii)$ of
\ref{Q-fact}. As a consequence, we can apply proposition \ref{Q-fact} to
$\pi_1$ and $\pi_2$. We deduce that $\tg$ is
$\mathbb{Q}$-factorial.
\end{proof}

\end{appendix}

\newpage

\bibliographystyle{alpha}

\bibliography{biblicrepant}

\end{document}